\newtheorem{theorem}{Theorem}[section]
\newtheorem{proposition}[theorem]{Proposition}
\newtheorem{lemma}[theorem]{Lemma}
\newtheorem{corollary}[theorem]{Corollary}
\theoremstyle{definition}
\newtheorem{example}[theorem]{Example}
\newtheorem{definition}[theorem]{Definition}
\newtheorem{problem}[theorem]{Problem}
\newcommand{\bigzero}{\mbox{\normalfont\Large\bfseries 0}}
\begin{document}

\author[Peter Danchev]{Peter Danchev}
\address{Institute of Mathematics and Informatics, Bulgarian Academy of Sciences, 1113 Sofia, Bulgaria}
\email{danchev@math.bas.bg; pvdanchev@yahoo.com}

\author[A. Javan]{Arash Javan}
\address{Department of Mathematics, Tarbiat Modares University, 14115-111 Tehran Jalal AleAhmad Nasr, Iran}
\email{a.darajavan@modares.ac.ir; a.darajavan@gmail.com}

\author[O. Hasanzadeh]{Omid Hasanzadeh}
\address{Department of Mathematics, Tarbiat Modares University, 14115-111 Tehran Jalal AleAhmad Nasr, Iran}
\email{o.hasanzade@modares.ac.ir; hasanzadeomiid@gmail.com}

\author[A. Moussavi]{Ahmad Moussavi}
\address{Department of Mathematics, Tarbiat Modares University, 14115-111 Tehran Jalal AleAhmad Nasr, Iran}
\email{moussavi.a@modares.ac.ir; moussavi.a@gmail.com}

\title[Rings whose non-units are weakly nil-clean]{Rings whose non-invertible elements are \\ weakly nil-clean}
\keywords{idempotent, nilpotent, unit, weakly nil-clean ring}
\subjclass[2010]{16S34, 16U60}

\maketitle




\begin{abstract}
In regard to our recent studies of rings with (strongly, weakly) nil-clean-like properties, we explore in-depth both the structural and characterization properties of those rings whose elements that are {\it not} units are weakly nil-clean. Group rings of this sort are considered and described as well.
\end{abstract}

\section{Introduction and Motivation}

In the current paper, let \( R \) denote an associative ring with identity element, not necessarily commutative. Typically, for such a ring \( R \), the sets \( U(R) \), \( {\rm Nil}(R) \), and \( {\rm Id}(R) \) represent the set of invertible elements (i.e., the unit group of \( R \)), the set of nilpotent elements, and the set of idempotent elements in \( R \), respectively. Additionally, \( J(R) \) denotes the Jacobson radical of \( R \), and \( {\rm Z}(R) \) denotes the center of \( R \). The ring of \( n \times n \) matrices over \( R \) and the ring of \( n \times n \) upper triangular matrices over \( R \) are denoted by \( {\rm M}_n(R) \) and \( {\rm T}_n(R) \), respectively. Traditionally, a ring is termed {\it abelian} if each idempotent element is central, meaning that \( {\rm Id}(R) \subseteq {\rm Z}(R) \).

\medskip

Before we start our investigation of the characteristic properties of a newly defined by us below class of rings, we need the following background material.

\begin{definition}[\cite{2},\cite{7}]\label{definition 1.1}
Let $R$ be a ring. An element $r \in R$ is said to be {\it clean} if there is an idempotent $e \in R$ and an unit $u \in R$ such that $r=e+u$. Such an element $r$ is further called {\it strongly clean} if the existing idempotent and unit can be chosen such that $ue=eu$. A ring is called {\it clean} (respectively, {\it strongly clean}) if each of its elements is clean (respectively, strongly clean).
\end{definition}

\begin{definition}[\cite{3}]\label{definition 1.2}
An element $r$ in a ring $R$ is said to be {\it weakly clean} if there is an idempotent $e \in R$ such that $r\pm e\in U(R)$, and a weakly clean ring is defined as the ring in which every element is weakly clean.A ring $R$ is said to be {\it strongly weakly clean} provided that, for any $a \in R$, $a$ or $-a$ is strongly clean.
\end{definition}	

\begin{definition}[\cite{4}]\label{definition 1.3}
Let $R$ be a ring. An element $r \in R$ is said to be {\it nil-clean} if there is an idempotent $e \in R$ and a nilpotent $b \in R$ such that $r=e+b$. Such an element $r$ is further called {\it strongly nil-clean} if the existing idempotent and nilpotent can be chosen such that $be=eb$. A ring is called {\it nil-clean} (respectively, {\it strongly nil-clean}) if each of its elements is nil-clean (respectively, strongly nil-clean).
\end{definition}

\begin{definition}[\cite{12}, \cite{5}]\label{definition 1.4}
A ring $R$ is said to be {\it weakly nil-clean} provided that, for any $a \in R$, there exists an idempotent $e \in R$ such that $a-e$ or $a+e$ is nilpotent. A ring $R$ is said to be {\it strongly weakly nil-clean} provided that, for any $a \in R$, $a$ or $-a$ is strongly nil-clean.
\end{definition}

\begin{definition}[\cite{21}, \cite{22}]\label{definition 1.10}
A ring is called {\it UU} if all of its units are unipotent,that is, $U(R) \subseteq 1+{\rm Nil}(R)$ (and so $1+{\rm Nil}(R)=U(R)$).
\end{definition}	

\begin{definition}[\cite{13}]\label{definition 1.8}
A ring $R$ is called {\it weakly UU} and abbreviated as $WUU$ if $U(R)={\rm Nil}(R) \pm 1$. This is equivalent to the condition that every unit can be presented as either $n+1$ or $n-1$, where $n \in {\rm Nil}(R)$.
\end{definition}

\begin{definition}[\cite{14}]\label{definition 1.9}
A ring $R$ is called {\it UWNC} if every of its units is weakly nil-clean.
\end{definition}

\begin{definition}[\cite{8}]\label{definition 1.5}
A ring $R$ a {\it generalized nil-clean}, briefly abbreviated by {\it GNC}, provided $$R\backslash U(R)\subseteq {\rm Id}(R) + {\rm Nil}(R).$$	
\end{definition}

\begin{definition}[\cite{9}]\label{definition 1.6}
A ring $R$ is called {\it generalized strongly  nil-clean}, briefly abbreviated by {\it GSNC} if every non-invertible element in $R$ is strongly nil-clean.	
\end{definition}

Our aim, which motivates writing of this paper, is to examine what will happen in the dual case when non-units in rings are weakly nil-clean elements, thus somewhat also expanding weakly nil-clean rings in an other way. So, we now arrive at our key instrument introduced as follows.				

\begin{definition}\label{definition 1.7}
We call a ring $R$ a {\it generalized weakly nil-clean}, briefly abbreviated by {\it GWNC}, provided $$R\backslash U(R)\subseteq {\rm Nil}(R) \pm {\rm Id}(R).$$
\end{definition}

Now, we have the following diagram which violates the relationships between the defined above classes of rings:

\vskip1.0pc

\begin{center}

\tikzset{every picture/.style={line width=0.75pt}} 

\begin{tikzpicture}[x=0.75pt,y=0.75pt,yscale=-1,xscale=1]

\draw    (179,77) -- (310.29,76.53) ;
\draw [shift={(313.29,76.52)}, rotate = 179.8] [fill={rgb, 255:red, 0; green, 0; blue, 0 }  ][line width=0.08]  [draw opacity=0] (6.25,-3) -- (0,0) -- (6.25,3) -- cycle    ;
\draw    (377,77) -- (492.29,76.53) ;
\draw [shift={(495.29,76.52)}, rotate = 179.77] [fill={rgb, 255:red, 0; green, 0; blue, 0 }  ][line width=0.08]  [draw opacity=0] (6.25,-3) -- (0,0) -- (6.25,3) -- cycle    ;
\draw    (368,163) -- (515.29,162.53) ;
\draw [shift={(518.29,162.52)}, rotate = 179.82] [fill={rgb, 255:red, 0; green, 0; blue, 0 }  ][line width=0.08]  [draw opacity=0] (6.25,-3) -- (0,0) -- (6.25,3) -- cycle    ;
\draw    (153,163) -- (313.29,162.53) ;
\draw [shift={(316.29,162.52)}, rotate = 179.83] [fill={rgb, 255:red, 0; green, 0; blue, 0 }  ][line width=0.08]  [draw opacity=0] (6.25,-3) -- (0,0) -- (6.25,3) -- cycle    ;
\draw    (186,254) -- (309.29,253.53) ;
\draw [shift={(312.29,253.52)}, rotate = 179.78] [fill={rgb, 255:red, 0; green, 0; blue, 0 }  ][line width=0.08]  [draw opacity=0] (6.25,-3) -- (0,0) -- (6.25,3) -- cycle    ;
\draw    (373,254) -- (484.29,254) ;
\draw [shift={(487.29,254)}, rotate = 180] [fill={rgb, 255:red, 0; green, 0; blue, 0 }  ][line width=0.08]  [draw opacity=0] (6.25,-3) -- (0,0) -- (6.25,3) -- cycle    ;
\draw    (110,152) -- (110.28,88.52) ;
\draw [shift={(110.29,85.52)}, rotate = 90.25] [fill={rgb, 255:red, 0; green, 0; blue, 0 }  ][line width=0.08]  [draw opacity=0] (6.25,-3) -- (0,0) -- (6.25,3) -- cycle    ;
\draw    (542,242) -- (542.28,183.52) ;
\draw [shift={(542.29,180.52)}, rotate = 90.27] [fill={rgb, 255:red, 0; green, 0; blue, 0 }  ][line width=0.08]  [draw opacity=0] (6.25,-3) -- (0,0) -- (6.25,3) -- cycle    ;
\draw    (342,232) -- (342.27,183.52) ;
\draw [shift={(342.29,180.52)}, rotate = 90.32] [fill={rgb, 255:red, 0; green, 0; blue, 0 }  ][line width=0.08]  [draw opacity=0] (8.93,-4.29) -- (0,0) -- (8.93,4.29) -- cycle    ;
\draw    (110,239) -- (110.28,177.52) ;
\draw [shift={(110.29,174.52)}, rotate = 90.26] [fill={rgb, 255:red, 0; green, 0; blue, 0 }  ][line width=0.08]  [draw opacity=0] (6.25,-3) -- (0,0) -- (6.25,3) -- cycle    ;
\draw    (542,153) -- (542.28,91.52) ;
\draw [shift={(542.29,88.52)}, rotate = 90.26] [fill={rgb, 255:red, 0; green, 0; blue, 0 }  ][line width=0.08]  [draw opacity=0] (6.25,-3) -- (0,0) -- (6.25,3) -- cycle    ;
\draw    (342,150) -- (342.28,91.52) ;
\draw [shift={(342.29,88.52)}, rotate = 90.27] [fill={rgb, 255:red, 0; green, 0; blue, 0 }  ][line width=0.08]  [draw opacity=0] (6.25,-3) -- (0,0) -- (6.25,3) -- cycle    ;

\draw (59,68) node [anchor=north west][inner sep=0.75pt]   [align=left] {weakly-nil-clean};
\draw (320,68) node [anchor=north west][inner sep=0.75pt]   [align=left] {GWNC};
\draw (500,68) node [anchor=north west][inner sep=0.75pt]   [align=left] {weakly clean};
\draw (85,155) node [anchor=north west][inner sep=0.75pt]   [align=left] {nil-clean};
\draw (324,155) node [anchor=north west][inner sep=0.75pt]   [align=left] {GNC};
\draw (522,155) node [anchor=north west][inner sep=0.75pt]   [align=left] {clean};
\draw (61,245) node [anchor=north west][inner sep=0.75pt]   [align=left] {strongly nil-clean};
\draw (320,245) node [anchor=north west][inner sep=0.75pt]   [align=left] {GSNC};
\draw (495,245) node [anchor=north west][inner sep=0.75pt]   [align=left] {strongly clean};

\end{tikzpicture}

\end{center}

Our principal work is organized as follows: In the next second section, we give some examples and suitable descriptions of certain crucial properties of GWNC rings that are mainly stated and proved in Theorems~\ref{theorem 2.35} and \ref{theorem 2.36} and the other statements associated with them. The subsequent third section is devoted to the classification when a group ring is GWNC as well as, reversely, what happens with the former objects of a group and a ring when the group ring is GWNC (see Lemma~\ref{rg} and Theorem~\ref{gr}, respectively, and the other assertions related to them). We close our work in the final fourth section with two challenging questions, namely Problems~\ref{1} and \ref{2}. 

\section{Examples and Basic Properties of GWNC Rings}

We begin with the following constructions on the definitions alluded to above.

\begin{example}\label{example 2.1}
\begin{enumerate}
\item
Any strongly nil-clean ring is GSNC, but the converse is {\it not} true in general. For instance, ${\rm M}_{2}(\mathbb{Z}_{2})$ is GSNC, but is {\it not} strongly nil-clean.
\item
Any GSNC ring is strongly clean, but the converse is {\it not} true in general. For instance,  $\mathbb{Z}_{2}[[x]]$ is strongly clean, but is {\it not} GSNC.
\item
Any nil-clean ring is GNC, but the converse is {\it not} true in general. For instance, $\mathbb{Z}_{3}$ is GNC but is {\it not} nil-clean.
\item
Any GNC ring is clean, but the converse is {\it not} true in general. For instance, $\mathbb{Z}_{6}$ is clean, but is {\it not} GNC.
\item
Any weakly nil-clean rng is GWNC, but the converse is {\it not} true in general. For instance, $\mathbb{Z}_{5}$ is GWNC, but is {\it not} weakly nil-clean.
\item
Any GWNC ring is weakly clean, but the converse is {\it not} true in general. For instance, ${\rm M}_{2}(\mathbb{Z}_{6})$ is weakly clean, but is {\it not} GWNC.
\item
Any strongly nil-clean ring is nil-clean, but the converse is {\it not} true in general. For instance, ${\rm M}_{2}(\mathbb{Z}_{2})$ is nil-clean, but is {\it not} strongly nil-clean.
\item
Any nil-clean ring is weakly nil-clean, but the converse is {\it not} true in general. For instance, $\mathbb{Z}_{3}$ is weakly nil-clean, but is {\it not} nil-clean.
\item
Any GSNC ring is GNC, but the converse is {\it not} true in general. For instance, ${\rm M}_{2}(\mathbb{Z}_{2})\oplus {\rm M}_{2}(\mathbb{Z}_{2})$ is GNC, but is {\it not} GSNC.
\item
Any GNC ring is GWNC, but the converse is {\it not} true in general. For instance, ${\rm M}_{2}(\mathbb{Z}_{3})$ is GWNC, but is {\it not} GNC.
\item
Any strongly clean ring is clean, but the converse is {\it not} true in general. For instance, ${\rm M}_{2}(\mathbb{Z}_{(2)})$ is clean, but is {\it not} strongly clean.
\item
Any clean ring is weakly clean, but the converse is {\it not} true in general. For instance, $\mathbb{Z}_{(5)}[i]$ is weakly clean, but is {\it not} clean.
\end{enumerate}
\end{example}

We continue our work with a series of technicalities.
		
\begin{lemma}\label{lemma 2.2}
Let $R$ be a ring and let $a\in R$ be a weakly nil-clean element. Then, $-a$ is weakly clean.
\end{lemma}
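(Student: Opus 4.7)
The plan is to translate the hypothesis directly. By the definition of a weakly nil-clean element, there exist an idempotent $e \in R$ and a nilpotent $n \in R$ such that either $a = e + n$ or $a = -e + n$. To exhibit weak cleanness of $-a$, I need to produce an idempotent $f$ with $-a + f \in U(R)$ or $-a - f \in U(R)$. The natural candidate is $f := 1 - e$, which is idempotent.

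I would then split into the two cases. In the case $a = e + n$, I compute
\[
-a - f = -e - n - (1-e) = -1 - n = -(1+n),
\]
and since $n \in {\rm Nil}(R)$, the element $1+n$ is a unit (standard geometric-series argument), hence so is $-(1+n)$. In the case $a = -e + n$, I compute
\[
-a + f = e - n + (1 - e) = 1 - n,
\]
which is again a unit for the same reason. Thus in both sub-cases $-a \pm f \in U(R)$ with $f$ idempotent, giving that $-a$ is weakly clean.

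There is no genuine obstacle in this proof; the only conceptual step is recognizing that one must use the \emph{complementary} idempotent $1-e$ rather than $e$ itself, because replacing $a$ by $-a$ converts $a - e = n$ into $-a + e = -n$ which is nilpotent but not generally a unit. Switching $e$ to $1-e$ introduces the constant $1$ or $-1$ that transforms the nilpotent into a unit of the form $1 \pm n$.
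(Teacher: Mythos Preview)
Your proof is correct and follows essentially the same approach as the paper: both use the complementary idempotent $1-e$ and split into the two cases $a = e + n$ and $a = -e + n$, obtaining that $-a \mp (1-e)$ equals $-(1+n)$ or $1-n$ respectively, which are units. The only cosmetic difference is that the paper writes the decomposition of $-a$ directly as (idempotent) $\pm$ (unit), whereas you compute $-a \pm f$ and verify it lies in $U(R)$.
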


\begin{proof}
Assume $a = q \pm e$ is a weakly nil-clean representation. If $a=q+e$, then we have $-a=(1-e)-(q+1)$, where $1-e$ is an idempotent and $q+1$ is a unit in $R$. If $a=q-e$, then we have $-a=-(1-e)+(1-q)$, where again $1-e$ is an idempotent and $1-q$ is a unit in $R$. Thus, $-a$ has a weakly clean decomposition and hence it is a weakly clean element.
\end{proof}

\begin{corollary}\label{corollary 2.3}
Let $R$ be a GWNC ring. Then, $R$ is weakly clean.
\end{corollary}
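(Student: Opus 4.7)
The plan is to reduce the corollary to Lemma~\ref{lemma 2.2} via a simple case split on whether a given element is a unit. Fix $a \in R$; I want to exhibit an idempotent $e \in R$ with $a + e \in U(R)$ or $a - e \in U(R)$.

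First I would handle the trivial case $a \in U(R)$: taking $e = 0$ (which is idempotent), we have $a - e = a \in U(R)$, so $a$ is weakly clean immediately. The interesting case is $a \notin U(R)$. By the GWNC hypothesis, $a \in {\rm Nil}(R) \pm {\rm Id}(R)$, so $a$ is a weakly nil-clean element of $R$. Applying Lemma~\ref{lemma 2.2}, the element $-a$ is weakly clean, i.e.\ there exists an idempotent $f$ with $-a - f \in U(R)$ or $-a + f \in U(R)$.

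Now I would observe that weak cleanness is preserved under negation: if $u \in U(R)$ then $-u \in U(R)$, so multiplying the equation $-a \pm f = u$ by $-1$ gives $a \mp f = -u \in U(R)$, which is exactly a weakly clean decomposition of $a$ with the same idempotent $f$. Combined with the unit case, this proves that every element of $R$ is weakly clean, as required.

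I do not expect any real obstacle here; the only thing to be slightly careful about is the sign bookkeeping, namely verifying that the weakly clean property is symmetric in $a \leftrightarrow -a$. An alternative, equally short route would bypass Lemma~\ref{lemma 2.2} entirely and argue directly: from $a = n + e$ (with $n$ nilpotent and $e$ idempotent) one has $a + (1 - e) = 1 + n \in U(R)$, while from $a = n - e$ one has $a - (1 - e) = n - 1 \in U(R)$, so in either case $1 - e$ serves as the required idempotent. Either formulation works, but invoking Lemma~\ref{lemma 2.2} keeps the proof to essentially one line.
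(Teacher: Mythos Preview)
Your proof is correct and is exactly the intended argument: the paper states Corollary~\ref{corollary 2.3} without proof, as an immediate consequence of Lemma~\ref{lemma 2.2}, and you have supplied precisely the routine details (the unit case and the observation that weak cleanness is stable under $a\mapsto -a$) that the paper leaves implicit. One minor streamlining: instead of passing from $a$ weakly nil-clean to $-a$ weakly clean and then negating back, you could note that $a\notin U(R)$ forces $-a\notin U(R)$, apply the GWNC hypothesis to $-a$, and then Lemma~\ref{lemma 2.2} gives $a=-(-a)$ weakly clean directly.
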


\begin{lemma}\label{lemma 2.4}
Let $R$ be a GWNC ring. Then, $J(R)$ is nil.
\end{lemma}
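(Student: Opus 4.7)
The plan is to show every $a\in J(R)$ is nilpotent by reducing modulo $J(R)$ and exploiting the fact that an element that is simultaneously idempotent and nilpotent must vanish.

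First I would dispose of the trivial case $R=0$, and otherwise observe that $J(R)\cap U(R)=\emptyset$ (if $a\in J(R)$ were a unit, then $1=aa^{-1}\in J(R)$, forcing $J(R)=R$, contradicting that $J(R)$ is proper). Thus any $a\in J(R)$ lies in $R\setminus U(R)$, so by the GWNC hypothesis we can write either $a=b+e$ or $a=b-e$ with $b\in\mathrm{Nil}(R)$ and $e\in\mathrm{Id}(R)$.

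Next I would pass to the quotient ring $\bar R=R/J(R)$. Since $\bar a=0$, the chosen decomposition yields $\bar e=\mp\bar b$. Because $b$ is nilpotent in $R$, its image $\bar b$ is nilpotent in $\bar R$; hence $\bar e$ is nilpotent too. But $\bar e$ is also idempotent, and any element that is simultaneously idempotent and nilpotent is zero. Therefore $\bar e=0$, i.e.\ $e\in J(R)$.

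The last step is the standard observation that the only idempotent in $J(R)$ is $0$: from $e\in J(R)$ we get $1-e\in U(R)$, and combining with $e(1-e)=0$ forces $e=0$. Substituting back, $a=\pm b\in\mathrm{Nil}(R)$, which proves $J(R)$ is nil.

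The proof is essentially routine; the only conceptual point is the reduction mod $J(R)$, where one must remember that GWNC does \emph{not} say each element is a sum of a nilpotent and an idempotent, but applies only to non-units — which is precisely why I first needed to verify that $J(R)\subseteq R\setminus U(R)$. No delicate commutativity or centrality issue enters, since the argument never multiplies a nilpotent by a unit inverse; everything happens either in $R$ (where the decomposition is given) or in $\bar R$ (where the nilpotent/idempotent clash is automatic).
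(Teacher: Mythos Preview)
Your proof is correct and follows essentially the same strategy as the paper's: use the GWNC decomposition $a=q\pm e$ of a Jacobson-radical element to force $e=0$, whence $a=q$ is nilpotent. The paper reaches $e=0$ slightly more directly, without the detour through $R/J(R)$: from $a=q\pm e$ it simply writes $1-e=(1\pm q)\mp a\in U(R)+J(R)\subseteq U(R)$, so the idempotent $1-e$ is a unit and hence $e=0$.
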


\begin{proof}
Choose $j \in J(R)$. Since $j \notin U(R)$, we have $e = e^2 \in R$ and $q \in {\rm Nil}(R)$ such that $j = q \pm e$. Therefore, $$1-e = (q+1) - j \in U(R) + J(R) \subseteq U(R)$$ or $$1-e = (1-q) + j \in U(R) + J(R) \subseteq U(R),$$ so $e = 0$. Hence, $j = q \in {\rm Nil}(R)$, as required.
\end{proof}

\begin{example}\label{example 2.5}
For any ring $R$, both the polynomial ring $R[x]$ and the formal power series ring $R[[x]]$ are {\it not} GWNC rings.
\end{example}

\begin{proof}
Considering \( R[[x]] \) as a GWNC ring, we know that $$J(R[[x]]) = \{a + xf(x) : a \in J(R) \text{ and } f(x) \in R[[x]]\}.$$ So, it is evident that $x \in J(R[[x]])$. Consequently, $J(R[[x]])$ is not nil, which contradicts the assertion, thereby establishing the desired claim.

Furthermore, if $R[x]$ is GWNC, then it is weakly clean in virtue of Corollary \ref{corollary 2.3}. But this is an obvious contradiction, and hence $R[x]$ cannot be GWNC, as claimed.
\end{proof}

A ring $R$ is said to be {\it reduced} if $R$ has no non-zero nilpotent elements.

\begin{lemma}\label{lemma 2.6}
Let $R$ be a GWNC ring with $2 \in U(R)$ and, for every $u \in U(R)$, we have $u^2 = 1$. Then, $R$ is a commutative ring.
\end{lemma}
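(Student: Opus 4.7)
My plan is to squeeze the hypothesis ``$u^2=1$ for every $u\in U(R)$'' very hard, first to eliminate all nilpotents, then to reduce the GWNC condition to an extremely rigid form, and finally to check commutativity by a three-case argument.

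First, I would show that $R$ is reduced. Take any $n\in\mathrm{Nil}(R)$. Since $1+n\in U(R)$, the hypothesis gives $(1+n)^{2}=1$, i.e.\ $n^{2}+2n=0$, so $n(2+n)=0$. Because $n$ is nilpotent and $2\in U(R)$, the element $2+n=2(1+2^{-1}n)$ is a product of two units and is therefore invertible; multiplying by $(2+n)^{-1}$ on the right yields $n=0$. Hence $\mathrm{Nil}(R)=0$. (In particular this also recovers $J(R)=0$ through Lemma~\ref{lemma 2.4}, though I will not need that.)

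Now I would cash in the GWNC hypothesis. For any non-unit $a\in R$ there exist $q\in\mathrm{Nil}(R)$ and an idempotent $e$ with $a=q\pm e$, and since $\mathrm{Nil}(R)=0$ we conclude
\[
R\setminus U(R)\subseteq \mathrm{Id}(R)\cup\bigl(-\mathrm{Id}(R)\bigr).
\]
Next, since $R$ is reduced, every idempotent is central (the standard computation $(ex(1-e))^{2}=0$ forces $ex=exe$, and similarly $xe=exe$). Consequently every non-unit of $R$ is central.

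It remains to show that any two units commute; after that the three cases (unit/unit, unit/non-unit, non-unit/non-unit) cover every pair in $R\times R$. For $u,v\in U(R)$ we have $uv\in U(R)$, so $(uv)^{2}=1$, i.e.\ $uvuv=1$. Since $u^{2}=v^{2}=1$ means $u^{-1}=u$ and $v^{-1}=v$, this gives $uv=(uv)^{-1}=v^{-1}u^{-1}=vu$. Combined with the centrality of all non-units just established, this yields $xy=yx$ for all $x,y\in R$, so $R$ is commutative. I do not anticipate a real obstacle here: the key maneuver is the observation $n(2+n)=0$ together with $2\in U(R)$, after which everything unwinds mechanically.
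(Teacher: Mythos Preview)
Your proposal is correct and follows essentially the same approach as the paper: show $R$ is reduced, deduce that idempotents (hence all non-units, via GWNC) are central, prove that units commute from $(uv)^2=1$, and finish with the three-case analysis. The only cosmetic difference is in the reduced step: the paper compares $(1-q)^2=1=(1+q)^2$ to obtain $4q=0$, whereas you use $(1+n)^2=1$ alone and cancel the unit $2+n$; both work equally well.
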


\begin{proof}
Firstly, we demonstrate that \( R \) is reduced. Assume \( R \) contains no non-trivial nilpotent elements. Suppose \( q \in \text{Nil}(R) \). Then, \( (1 \pm q) \in U(R) \), so
\[ 1 - 2q + q^2 = (1-q)^2 = 1 = (1+q)^2 = 1 + 2q + q^2. \]
Thus, \( 4q = 0 \). Since \( 2 \in U(R) \), we conclude \( q = 0 \). Hence, \( R \) is reduced and, consequently, \( R \) is abelian.

Moreover, for any \( u, v \in U(R) \), we have \( u^2 = v^2 = (uv)^2 = 1 \). Therefore, \( uv = (uv)^{-1} = v^{-1} u^{-1} = vu \), whence the units commute with each other.

In addition, let \( x, y \in R \). We consider the following cases:

1. \( x, y \in U(R) \): since the units commute, it must be that \( xy = yx \).

2. \( x, y \notin U(R) \): since \( R \) is a GWNC ring, there exist \( e, f \in \text{Id}(R) \) such that \( x = \pm e \) and \( y = \pm f \). Thus, \( xy = yx \), because \( R \) is abelian.

3. \( x \in U(R) \) and \( y \notin U(R) \): in this case, there exists \( e \in \text{Id}(R) \) such that \( y = \pm e \). Moreover, \( R \) being abelian implies \( xy = yx \).

4. \( x \notin U(R) \) and \( y \in U(R) \): similar to case (3), we can easily see that \( xy = yx \).	
\end{proof}

\begin{corollary}\label{corollary 2.7}
Let $R$ be a GWNC ring. Then, ${\rm Nil}(R)+J(R)={\rm Nil}(R)$.
\end{corollary}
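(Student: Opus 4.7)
The plan is to establish the nontrivial inclusion ${\rm Nil}(R) + J(R) \subseteq {\rm Nil}(R)$; the reverse containment is immediate since $0 \in J(R)$. Given $n \in {\rm Nil}(R)$ with $n^{k} = 0$ for some $k \geq 1$ and $j \in J(R)$, I would expand $(n+j)^{k}$ by repeated distribution as a sum of $2^{k}$ monomials, each an ordered product of $k$ factors drawn from $\{n,j\}$. The unique summand containing no $j$-factor is $n^{k} = 0$, while every other monomial contains at least one factor of $j$ flanked by elements of $R$, and hence lies in $RjR \subseteq J(R)$. This yields $(n+j)^{k} \in J(R)$.

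Next I would invoke Lemma~\ref{lemma 2.4}, which guarantees that $J(R)$ is nil in any GWNC ring. Consequently, $(n+j)^{k}$ is itself nilpotent, so $\bigl((n+j)^{k}\bigr)^{m} = 0$ for some $m \geq 1$, which gives $(n+j)^{km} = 0$. Thus $n+j \in {\rm Nil}(R)$, completing the inclusion.

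The only delicate point is that $R$ need not be commutative, so the binomial theorem is unavailable and ${\rm Nil}(R)$ is not a priori closed under addition. The decisive leverage is that $J(R)$ is a two-sided ideal, which absorbs any monomial carrying even a single $j$-factor, together with the GWNC input to Lemma~\ref{lemma 2.4} that upgrades ``element of $J(R)$'' to ``nilpotent element.'' Without the latter upgrade the argument would stall at $(n+j)^{k} \in J(R)$, so Lemma~\ref{lemma 2.4} is the essential ingredient rather than the routine combinatorial expansion.
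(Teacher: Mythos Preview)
Your proof is correct and follows essentially the same approach as the paper: expand $(n+j)^{k}$, observe that every monomial except $n^{k}=0$ lands in the two-sided ideal $J(R)$, and then invoke Lemma~\ref{lemma 2.4} to conclude that $(n+j)^{k}\in J(R)\subseteq{\rm Nil}(R)$. The paper's version is simply terser, writing $(x+y)^{m}=x^{m}+j$ with $j\in J(R)$ in one line, whereas you spell out the $2^{k}$-monomial expansion and explicitly address the noncommutative subtlety.
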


\begin{proof}
Let us assume that $x \in {\rm Nil}(R)$ and $y \in J(R)$. Thus, there exists $m \in \mathbb{N}$ such that $x^m = 0$. Therefore, $(x+y)^m = x^m + j$, where $j \in J(R)$. Employing Lemma \ref{lemma 2.4}, we arrive at $(x+y)^m = j \in {\rm Nil}(R)$, as required.
\end{proof}

\begin{proposition}\label{proposition 2.8}
Let $R$ be a ring and $I$ a nil-ideal of $R$.
\begin{enumerate}
\item		
$R$ is GWNC if, and only if, $R/I$ is GWNC.
\item
A ring $R$ is GWNC if, and only if, $J(R)$ is nil and $R/J(R)$ is GWNC.
\end{enumerate}
\end{proposition}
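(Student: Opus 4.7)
The plan is to prove part (i) directly, using three standard facts about a nil ideal $I \trianglelefteq R$: (a) $I \subseteq J(R)$, so a lift $a$ of an element $\bar a \in R/I$ satisfies $a \in U(R)$ iff $\bar a \in U(R/I)$; (b) idempotents lift modulo $I$; (c) an element $q \in R$ is nilpotent iff its image $\bar q$ in $R/I$ is nilpotent, because $q^n \in I$ together with $I$ nil forces $q$ itself to be nilpotent. Part (ii) will then follow immediately by combining part (i) (applied with $I = J(R)$) with Lemma~\ref{lemma 2.4}.

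For the forward direction of (i), I would take an arbitrary $\bar a \in R/I \setminus U(R/I)$, pick any preimage $a \in R$, and invoke (a) to conclude $a \notin U(R)$. Since $R$ is GWNC, write $a = q \pm e$ with $q \in {\rm Nil}(R)$ and $e^2 = e$; reducing modulo $I$ gives $\bar a = \bar q \pm \bar e$, with $\bar e$ idempotent and $\bar q$ nilpotent in $R/I$, as needed.

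For the backward direction of (i), start with $a \in R \setminus U(R)$; by (a), $\bar a \notin U(R/I)$, so the GWNC hypothesis on $R/I$ supplies a decomposition $\bar a = \bar q' \pm \bar e'$ with $\bar q'$ nilpotent and $\bar e'$ idempotent. By (b), lift $\bar e'$ to an idempotent $e \in R$, and set $q := a \mp e$ (with the sign matched to the decomposition). Then $\bar q = \bar a \mp \bar e = \bar q'$ is nilpotent in $R/I$, so by (c), $q$ is nilpotent in $R$, giving $a = q \pm e$ as required.

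The one subtle point — really the only place to be careful — is in the backward direction: the choice of lift $e$ is not canonical, so the element $q = a \mp e$ is not equal to an a priori chosen lift of $\bar q'$; I must verify that $\bar q = \bar q'$ holds by reading off the decomposition, and then use the nilness of $I$ to transfer nilpotency from $\bar q$ back to $q$. Part (ii) is then a one-line consequence: if $R$ is GWNC then $J(R)$ is nil by Lemma~\ref{lemma 2.4} and $R/J(R)$ is GWNC by part (i); conversely, if $J(R)$ is nil and $R/J(R)$ is GWNC, applying part (i) with $I = J(R)$ gives that $R$ is GWNC.
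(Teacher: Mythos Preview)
Your proposal is correct and follows essentially the same approach as the paper: lift units/idempotents/nilpotents through the nil ideal $I$ in both directions, and derive part~(ii) from part~(i) together with Lemma~\ref{lemma 2.4}. One small difference worth noting: in the backward direction of~(i) the paper lifts \emph{both} the idempotent and the nilpotent separately and then absorbs the leftover element of $I\subseteq J(R)$ into the nilpotent via Corollary~\ref{corollary 2.7}, whereas you lift only the idempotent and set $q:=a\mp e$ directly, appealing to your fact~(c). Your route is slightly cleaner, since Corollary~\ref{corollary 2.7} is stated for GWNC rings (which is what is being proved), so your argument sidesteps that minor circularity; but the underlying content---$q^n\in I$ and $I$ nil forces $q$ nilpotent---is the same in both.
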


\begin{proof}
\begin{enumerate}
\item	
We assume that $\overline{R} = R/I$ and $\bar{a} \notin U(\overline{R})$. Then, $a \notin U(R)$, so there exist $e \in {\rm Id}(R)$ and $q \in {\rm Nil}(R)$ such that $a = q \pm e$. Thus, $\bar{a} = \bar{q} \pm \bar{e}$.

Conversely, let us assume that $\overline{R}$ is a GWNC ring. We, besides, assume that $a \notin U(R)$, so $\bar{a} \notin U(\overline{R})$, and hence $\bar{a} = \bar{q} \pm \bar{e}$, where $\bar{e} \in {\rm Id}(\overline{R})$ and $\bar{q} \in {\rm Nil}(\overline{R})$. Since $I$ is a nil-ideal, we can assume that $e \in {\rm Id}(R)$ and $q \in {\rm Nil}(R)$. Therefore, $a - (q \pm e) \in I \subseteq J(R)$, so that there exists $j \in J(R)$ such that $a = (q + j) \pm e$. Hence, Corollary \ref{corollary 2.7} applies to get that $a$ has a weakly nil-clean representation, as needed.
\item
Utilizing Lemma \ref{lemma 2.4} and part (i), the conclusion is fulfilled.
\end{enumerate}
\end{proof}

\begin{corollary}\label{corollary 2.9}
Every homomorphic image of a GWNC ring is again GWNC.
\end{corollary}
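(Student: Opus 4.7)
The plan is to show directly that for any two-sided ideal $I$ of a GWNC ring $R$, the quotient $R/I$ is again GWNC; since any homomorphic image is isomorphic to some such quotient, this suffices. Note that I cannot simply invoke Proposition~\ref{proposition 2.8}(i), because that result only covers quotients by \emph{nil} ideals, whereas here $I$ is arbitrary.

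The key observation is that ring homomorphisms send units to units, so conversely non-units in $R/I$ must lift to non-units in $R$. Concretely, pick $\bar a \in R/I$ with $\bar a \notin U(R/I)$, and let $a \in R$ be any preimage under the canonical projection $\pi \colon R \to R/I$. If $a$ were invertible in $R$ with inverse $b$, then $\pi(a)\pi(b)=\pi(b)\pi(a)=\bar 1$, forcing $\bar a \in U(R/I)$, a contradiction. Hence $a \notin U(R)$.

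Now invoke the GWNC property of $R$: there exist $e = e^2 \in R$ and $q \in {\rm Nil}(R)$ with $a = q \pm e$. Applying $\pi$ yields $\bar a = \bar q \pm \bar e$, where $\bar e$ is still an idempotent (being the image of an idempotent) and $\bar q$ is still nilpotent (if $q^n = 0$ in $R$, then $\bar q^{\,n}=0$ in $R/I$). Thus $\bar a$ has a weakly nil-clean decomposition in $R/I$, and we conclude $R/I$ is GWNC.

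I do not expect a real obstacle here: the entire argument is the lifting observation plus the fact that idempotency and nilpotency are preserved under ring homomorphisms. The only temptation to avoid is trying to route the proof through Proposition~\ref{proposition 2.8}(i), which would be an overkill and in any case would not apply when $I$ is not nil.
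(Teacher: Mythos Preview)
Your argument is correct and is exactly the ``straightforward'' verification the paper has in mind; the paper's own proof consists solely of the sentence ``It is straightforward.'' Your remark about Proposition~\ref{proposition 2.8}(i) not applying to non-nil ideals is accurate, and the direct lifting argument you give is the intended one.
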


\begin{proof}
It is straightforward.
\end{proof}

\begin{corollary}
Let $I$ be an ideal of a ring $R$. Then, the following are equivalent:
\begin{enumerate}
\item
$R/I$ is GWNC.
\item
$R/I^n$	is GWNC for all $n \in \mathbb{N}$.
\item
$R/I^n$ is GWNC for some $n \in \mathbb{N}$.
\end{enumerate}	
\end{corollary}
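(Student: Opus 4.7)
The plan is to apply Proposition~\ref{proposition 2.8}(i) with $R/I^n$ in place of $R$ and with $I/I^n$ in place of the nil ideal $I$. The two ingredients that make this work are (a) the third isomorphism theorem identification $R/I \cong (R/I^n)/(I/I^n)$, and (b) the nilpotency $(I/I^n)^n = I^n/I^n = 0$, which shows that $I/I^n$ is in fact a \emph{nilpotent} (and therefore nil) ideal of $R/I^n$. Once these are in place, the three equivalences fall out of one invocation of that proposition in each direction.

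First I would observe that (ii) $\Rightarrow$ (iii) is immediate by taking any chosen $n$ (for instance $n=1$, noting $R/I^1 = R/I$).

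Next, for (iii) $\Rightarrow$ (i), I would fix some $n$ with $R/I^n$ being GWNC. Because $I/I^n$ is a nil-ideal of $R/I^n$ by the nilpotency remark, Proposition~\ref{proposition 2.8}(i) applied to the pair $(R/I^n,\, I/I^n)$ gives that the quotient $(R/I^n)/(I/I^n)$ is GWNC; this quotient is canonically isomorphic to $R/I$, hence (i) holds.

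Finally, for (i) $\Rightarrow$ (ii), assume $R/I$ is GWNC and fix an arbitrary $n \in \mathbb{N}$. Via the isomorphism $(R/I^n)/(I/I^n) \cong R/I$, the quotient of $R/I^n$ by the nil-ideal $I/I^n$ is GWNC. Applying the reverse direction of Proposition~\ref{proposition 2.8}(i) lifts the GWNC property back up to $R/I^n$, which yields (ii).

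There is no real obstacle in this argument; it is a mechanical consequence of the nil-ideal factorization principle established earlier. The only thing to be careful about is to invoke the full ``if and only if'' content of Proposition~\ref{proposition 2.8}(i) in each direction, and to confirm that $I^n$ is indeed a two-sided ideal so that the quotient $R/I^n$ makes sense (which is automatic from $I$ being two-sided).
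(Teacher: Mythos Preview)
Your proposal is correct and follows essentially the same approach as the paper: both arguments hinge on the third isomorphism theorem identification $(R/I^n)/(I/I^n)\cong R/I$, the nilpotency of $I/I^n$, and Proposition~\ref{proposition 2.8}(i). The only cosmetic difference is that for (iii) $\Rightarrow$ (i) the paper cites Corollary~\ref{corollary 2.9} (homomorphic images of GWNC rings are GWNC) rather than the forward direction of Proposition~\ref{proposition 2.8}(i), but this comes to the same thing.
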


\begin{proof}
(i) $\Longrightarrow$ (ii). For any $n \in \mathbb{N}$, we know that $\dfrac{R/I^n}{I/I^n} \cong R/I$. Since $I/I^n$ is a nil-ideal of $R/I^n$ and $R/I$ is GWNC, Proposition \ref {proposition 2.8} works to derive that $R/I^n$ is a GWNC ring.\\
(ii) $\Longrightarrow$ (iii). This is quite trivial, so we leave the details.\\
(iii) $\Longrightarrow$ (i). For any ideal $I$ of $R$, we have $\dfrac{R/I^n}{I/I^n} \cong R/I$, and because we have seen above that each homomorphic image of a GWNC ring is again GWNC, we conclude that $R/I$ is GWNC.
\end{proof}

Let $Nil_{*}(R)$ denote the prime radical of a ring $R$, i.e., the intersection of all prime ideals of $R$. We know that $Nil_{*}(R)$ is a nil-ideal of $R$, and so the next assertion is immediately true.

\begin{corollary}\label{corollary 2.10}
Let $R$ be a ring. Then, the following are equivalent:
\begin{enumerate}
\item
$R$ is GWNC.
\item
$\dfrac{R}{Nil_{*}(R)}$ is GWNC.
\end{enumerate}
\end{corollary}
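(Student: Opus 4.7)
The plan is essentially to invoke the machinery already built in Proposition \ref{proposition 2.8}(i), so this is a one-line reduction rather than a genuinely new argument. The key fact used (and flagged in the paragraph preceding the statement) is that the prime radical $Nil_{*}(R)$, being the intersection of all prime ideals, is a nil-ideal of $R$; this is standard and requires no proof in the paper.

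With that in hand, the strategy is to specialize Proposition \ref{proposition 2.8}(i) to the nil-ideal $I = Nil_{*}(R)$. The forward direction (i) $\Longrightarrow$ (ii) is then just the observation that GWNC passes to quotients by any ideal (this is Corollary \ref{corollary 2.9}), so in particular it passes to $R/Nil_{*}(R)$. For the reverse direction (ii) $\Longrightarrow$ (i), we use the nontrivial half of Proposition \ref{proposition 2.8}(i): if $R/I$ is GWNC and $I$ is a nil-ideal, then idempotents lift, nilpotents lift modulo $I$, and the correction term lands in $J(R)$, which by Corollary \ref{corollary 2.7} can be absorbed into the nilpotent part so as to produce a genuine weakly nil-clean representation in $R$ of any non-unit.

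The only potential subtlety would be ensuring that a non-unit of $R$ maps to a non-unit of $R/Nil_{*}(R)$, but this is automatic: units lift from $R/I$ to $R$ across a nil-ideal $I$, so the preimage of a unit is a unit, which is exactly what is needed to apply the GWNC hypothesis downstairs and then push the representation back up. There is no real obstacle here — the corollary is a direct specialization of the previous proposition, and the proof will consist of a single sentence citing Proposition \ref{proposition 2.8}(i) together with the nilness of $Nil_{*}(R)$.
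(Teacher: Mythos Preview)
Your proposal is correct and matches the paper's approach exactly: the paper simply remarks that $Nil_{*}(R)$ is a nil-ideal and declares the corollary ``immediately true,'' which amounts to specializing Proposition~\ref{proposition 2.8}(i) to $I = Nil_{*}(R)$, precisely as you do. Your additional commentary on lifting and non-units is accurate but more detail than the paper provides.
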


Let $R$ be a ring and $M$ a bi-module over $R$. The trivial extension of $R$ and $M$ is defined as
\[ {\rm T}(R, M) = \{(r, m) : r \in R \text{ and } m \in M\}, \]
with addition defined componentwise and multiplication defined by
\[ (r, m)(s, n) = (rs, rn + ms). \]
The trivial extension ${\rm T}(R, M)$ is isomorphic to the subring
\[ \left\{ \begin{pmatrix} r & m \\ 0 & r \end{pmatrix} : r \in R \text{ and } m \in M \right\} \]
of the formal $2 \times 2$ matrix ring $\begin{pmatrix} R & M \\ 0 & R \end{pmatrix}$, and also ${\rm T}(R, R) \cong R[x]/\left\langle x^2 \right\rangle$. We, moreover, note that the set of units of the trivial extension ${\rm T}(R, M)$ is
\[ U({\rm T}(R, M)) = {\rm T}(U(R), M). \]
So, as two immediate consequences, we yield:

\begin{corollary}\label{corollary 2.11}
Let $R$ be a ring and $M$ a bi-module over $R$. Then, the following hold:
\begin{enumerate}
\item
The trivial extension ${\rm T}(R, M)$ is a GWNC ring if, and only if, $R$ is a GWNC ring.
\item
For $n \geq 2$, the quotient-ring $\dfrac{R[x]}{\langle x^n\rangle}$ is a GWNC ring if, and only if, $R$ is a GWNC ring.
\item
For $n \geq 2$, the quotient-ring $\dfrac{R[[x]]}{\langle x^n\rangle}$ is a GWNC ring if, and only if, $R$ is a GWNC ring.
\end{enumerate}
\end{corollary}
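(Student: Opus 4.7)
The plan is to derive all three equivalences as direct applications of Proposition~\ref{proposition 2.8}(i) by exhibiting, in each case, a nil-ideal whose quotient recovers $R$.

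For part (i), I would set $I = \{(0,m) : m \in M\} \subseteq {\rm T}(R,M)$. A short calculation shows $(r,s)(0,m) = (0,rm)$ and $(0,m)(r,s)=(0,ms)$, so $I$ is a two-sided ideal, and clearly $I^2 = 0$, hence $I$ is nil. The projection onto the first coordinate yields ${\rm T}(R,M)/I \cong R$. Applying Proposition~\ref{proposition 2.8}(i) immediately gives that ${\rm T}(R,M)$ is GWNC if and only if $R$ is GWNC.

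For part (ii), let $S = R[x]/\langle x^n \rangle$ and take $I = \langle \bar{x} \rangle \subseteq S$, where $\bar{x}$ denotes the image of $x$. Then $I^n = 0$, so $I$ is nilpotent and, in particular, a nil-ideal. The evaluation-at-zero map induces $S/I \cong R$, so Proposition~\ref{proposition 2.8}(i) again yields the desired equivalence. Part (iii) is proved by the verbatim argument, replacing $R[x]$ with $R[[x]]$; the key point is still that the ideal generated by $\bar{x}$ in $R[[x]]/\langle x^n\rangle$ is nilpotent of index at most $n$.

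I do not anticipate any real obstacles: everything reduces to verifying that the natural ``augmentation-style'' ideal in each construction is nil with quotient $R$, after which Proposition~\ref{proposition 2.8}(i) does all the work. The only mild subtlety worth mentioning is that one must check $I$ is genuinely a two-sided ideal in the trivial extension (which is the only case where $I$ is not generated by a central element), but this is immediate from the multiplication rule in ${\rm T}(R,M)$.
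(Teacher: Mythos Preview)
Your proposal is correct and essentially identical to the paper's proof: for (i) and (ii) the paper uses exactly the same nil-ideals $I={\rm T}(0,M)$ and $I=\langle x\rangle/\langle x^n\rangle$ together with Proposition~\ref{proposition 2.8}. The only cosmetic difference is that for (iii) the paper invokes the isomorphism $R[x]/\langle x^n\rangle \cong R[[x]]/\langle x^n\rangle$ to reduce to (ii), whereas you rerun the nil-ideal argument directly; both are immediate.
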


\begin{proof}
\begin{enumerate}
\item
Set $A={\rm T}(R, M)$ and consider $I:={\rm T}(0, M)$. It is not too hard to verify that $I$ is a nil-ideal of $A$ such that $\dfrac{A}{I} \cong R$. So, the result follows directly from Proposition \ref{proposition 2.8}.
\item
Put $A=\dfrac{R[x]}{\langle x^n\rangle}$. Considering $I:=\dfrac{\langle x\rangle}{\langle x^n\rangle}$, we obtain that $I$ is a nil-ideal of $A$ such that $\dfrac{A}{I} \cong R$. So, the result follows automatically from Proposition \ref{proposition 2.8}.
\item
Knowing that the isomorphism $\dfrac{R[x]}{\langle x^n\rangle} \cong \dfrac{R[[x]]}{\langle x^n\rangle}$ is true, point (iii) follows at once from (ii).
\end{enumerate}
\end{proof}

\begin{corollary}\label{corollary 2.12}
Let $R$ be a ring and $M$ a bi-module over $R$. Then, the following statements are equivalent:
\begin{enumerate}
\item
$R$ is a GWNC ring.
\item
${\rm T}(R, M)$ is a GWNC ring.
\item
${\rm T}(R, R)$ is a GWNC ring.
\item
$\dfrac{R[x]}{\left\langle x^2 \right\rangle}$ is a GWNC ring.
\end{enumerate}
\end{corollary}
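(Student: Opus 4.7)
The plan is to observe that this corollary is essentially a repackaging of Corollary~\ref{corollary 2.11}, combined with the explicit ring isomorphism ${\rm T}(R,R) \cong R[x]/\langle x^2\rangle$ that was recorded in the paragraph preceding its statement. So I would prove nothing new; rather I would weave the pieces together into a short cycle of implications.

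First I would note that (i) $\Longleftrightarrow$ (ii) is already Corollary~\ref{corollary 2.11}(i) applied to the given bi-module $M$, and that (i) $\Longleftrightarrow$ (iii) is the same statement specialized to the bi-module $M=R$. This immediately yields (ii) $\Longleftrightarrow$ (iii), with (i) as the common pivot. The implication (ii) $\Longrightarrow$ (iii) can alternatively be viewed as a concrete instantiation, which is reassuring but not logically necessary.

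Next I would handle (iii) $\Longleftrightarrow$ (iv). Here there are two equally short routes. The slick route is simply to invoke the isomorphism ${\rm T}(R,R)\cong R[x]/\langle x^2\rangle$ stated in the paper, since the GWNC property is plainly preserved under ring isomorphism. The alternative route is to apply Corollary~\ref{corollary 2.11}(ii) with $n=2$, which gives (iv) $\Longleftrightarrow$ (i) directly and thus completes the cycle with (i) $\Longleftrightarrow$ (iii). I would probably quote both for clarity.

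Assembling these, the chain (i) $\Longleftrightarrow$ (ii) $\Longleftrightarrow$ (iii) $\Longleftrightarrow$ (iv) is complete, with (i) serving as the hub in each step. There is no genuine obstacle; the only mild subtlety is cosmetic, namely making sure the reader sees that the $M=R$ case of Corollary~\ref{corollary 2.11}(i) and the $n=2$ case of Corollary~\ref{corollary 2.11}(ii) are compatible through the explicit isomorphism ${\rm T}(R,R)\cong R[x]/\langle x^2\rangle$. In writing, I would keep the proof to two or three lines, citing Corollary~\ref{corollary 2.11} and the isomorphism, rather than reproving anything.
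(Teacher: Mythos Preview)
Your proposal is correct and matches the paper's intent: the paper states Corollary~\ref{corollary 2.12} with no proof, treating it as an immediate consequence of Corollary~\ref{corollary 2.11} together with the isomorphism ${\rm T}(R,R)\cong R[x]/\langle x^2\rangle$ recorded just before it. Your plan of using (i) as the hub and invoking Corollary~\ref{corollary 2.11}(i), its $M=R$ specialization, and either the isomorphism or Corollary~\ref{corollary 2.11}(ii) with $n=2$ is exactly the intended short argument.
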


Consider now $R$ to be a ring and $M$ to be a bi-module over $R$. Let $${\rm DT}(R,M) := \{ (a, m, b, n) | a, b \in R, m, n \in M \}$$ with addition defined componentwise and multiplication defined by $$(a_1, m_1, b_1, n_1)(a_2, m_2, b_2, n_2) = (a_1a_2, a_1m_2 + m_1a_2, a_1b_2 + b_1a_2, a_1n_2 + m_1b_2 + b_1m_2 +n_1a_2).$$ Then, ${\rm DT}(R,M)$ is a ring which is isomorphic to ${\rm T}({\rm T}(R, M), {\rm T}(R, M))$. We also have $${\rm DT}(R, M) =
\left\{\begin{pmatrix}
	a &m &b &n\\
	0 &a &0 &b\\
	0 &0 &a &m\\
	0 &0 &0 &a
\end{pmatrix} |  a,b \in R, m,n \in M\right\}.$$ In particular, we obtain the following isomorphism as rings: $\dfrac{R[x, y]}{\langle x^2, y^2\rangle} \rightarrow {\rm DT}(R, R)$ defined by $$a + bx + cy + dxy \mapsto
\begin{pmatrix}
	a &b &c &d\\
	0 &a &0 &c\\
	0 &0 &a &b\\
	0 &0 &0 &a
\end{pmatrix}.$$
We, thereby, extract the following.

\begin{corollary}\label{corollary 2.13}
Let $R$ be a ring and $M$ a bi-module over $R$. Then, the following statements are equivalent:
\begin{enumerate}
\item
$R$ is a GWNC ring.
\item
${\rm DT}(R, M)$ is a GWNC ring.
\item
${\rm DT}(R, R)$ is a GWNC ring.
\item
$\dfrac{R[x, y]}{\langle x^2, y^2\rangle}$ is a GWNC ring.
\end{enumerate}
\end{corollary}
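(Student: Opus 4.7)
The plan is to reduce Corollary \ref{corollary 2.13} to the already-proved Corollary \ref{corollary 2.12} by exploiting the iterated trivial-extension description of ${\rm DT}(R,M)$. Concretely, the key structural fact (stated just before the corollary) is that
\[
{\rm DT}(R,M)\;\cong\;{\rm T}\bigl({\rm T}(R,M),\,{\rm T}(R,M)\bigr).
\]
Once this is in hand, the equivalence (i)$\Leftrightarrow$(ii) should fall out by applying Corollary \ref{corollary 2.12}(i) twice in succession: first to the outer trivial extension, which yields that ${\rm DT}(R,M)$ is GWNC iff ${\rm T}(R,M)$ is GWNC, and then to the inner one, which yields that ${\rm T}(R,M)$ is GWNC iff $R$ is GWNC.

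Next, for (i)$\Leftrightarrow$(iii), I would specialize $M=R$ in the argument above; no new ideas are needed because the isomorphism ${\rm DT}(R,R)\cong {\rm T}({\rm T}(R,R),{\rm T}(R,R))$ is a particular case of the one we just used, and the same two applications of Corollary \ref{corollary 2.12}(i) deliver the equivalence.

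For (iii)$\Leftrightarrow$(iv), I would invoke the explicit ring isomorphism
\[
\frac{R[x,y]}{\langle x^{2},y^{2}\rangle}\;\longrightarrow\;{\rm DT}(R,R),\qquad a+bx+cy+dxy\;\longmapsto\;\begin{pmatrix} a & b & c & d\\ 0 & a & 0 & c\\ 0 & 0 & a & b\\ 0 & 0 & 0 & a\end{pmatrix},
\]
which is displayed immediately before the statement of the corollary. Since being GWNC is an invariant of the isomorphism class of a ring, this gives (iii)$\Leftrightarrow$(iv) with no further work.

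I do not expect any real obstacle here: the content of the corollary is packaged into the iterated-extension identification, and Corollary \ref{corollary 2.12} does the heavy lifting. The only point where one must be mildly careful is ensuring that the isomorphism ${\rm DT}(R,M)\cong{\rm T}({\rm T}(R,M),{\rm T}(R,M))$ is genuinely a ring isomorphism (not merely an additive one), so that Corollary \ref{corollary 2.12} applies; this can be checked directly from the multiplication rule for ${\rm DT}(R,M)$ given above, by matching $(a,m,b,n)$ with the pair $\bigl((a,m),(b,n)\bigr)\in{\rm T}({\rm T}(R,M),{\rm T}(R,M))$ and verifying that the product formula for ${\rm DT}(R,M)$ coincides with $(a_{1},m_{1})(b_{2},n_{2})+(b_{1},n_{1})(a_{2},m_{2})$ in the second coordinate. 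With that verification, the chain (i)$\Leftrightarrow$(ii)$\Leftrightarrow$(iii)$\Leftrightarrow$(iv) is immediate.
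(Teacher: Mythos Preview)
Your proposal is correct and follows exactly the approach the paper intends: the corollary is stated immediately after the isomorphisms ${\rm DT}(R,M)\cong{\rm T}({\rm T}(R,M),{\rm T}(R,M))$ and $R[x,y]/\langle x^2,y^2\rangle\cong{\rm DT}(R,R)$ with the phrase ``We, thereby, extract the following,'' so the paper's implicit proof is precisely your two-fold application of Corollary~\ref{corollary 2.12} together with transport along these isomorphisms.
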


Let us now $\alpha$ be an endomorphism of $R$, and suppose $n$ a positive integer. It was defined by Nasr-Isfahani in \cite{10} the {\it skew triangular matrix ring} like this:
$${\rm T}_{n}(R,\alpha )=\left\{ \left. \begin{pmatrix}
	a_{0} & a_{1} & a_{2} & \cdots & a_{n-1} \\
	0 & a_{0} & a_{1} & \cdots & a_{n-2} \\
	0 & 0 & a_{0} & \cdots & a_{n-3} \\
	\ddots & \ddots & \ddots & \vdots & \ddots \\
	0 & 0 & 0 & \cdots & a_{0}
\end{pmatrix} \right| a_{i}\in R \right\}$$
with addition point-wise and multiplication, given by:
\begin{align*}
	&\begin{pmatrix}
		a_{0} & a_{1} & a_{2} & \cdots & a_{n-1} \\
		0 & a_{0} & a_{1} & \cdots & a_{n-2} \\
		0 & 0 & a_{0} & \cdots & a_{n-3} \\
		\ddots & \ddots & \ddots & \vdots & \ddots \\
		0 & 0 & 0 & \cdots & a_{0}
	\end{pmatrix}\begin{pmatrix}
		b_{0} & b_{1} & b_{2} & \cdots & b_{n-1} \\
		0 & b_{0} & b_{1} & \cdots & b_{n-2} \\
		0 & 0 & b_{0} & \cdots & b_{n-3} \\
		\ddots & \ddots & \ddots & \vdots & \ddots \\
		0 & 0 & 0 & \cdots & b_{0}
	\end{pmatrix}  =\\
	& \begin{pmatrix}
		c_{0} & c_{1} & c_{2} & \cdots & c_{n-1} \\
		0 & c_{0} & c_{1} & \cdots & c_{n-2} \\
		0 & 0 & c_{0} & \cdots & c_{n-3} \\
		\ddots & \ddots & \ddots & \vdots & \ddots \\
		0 & 0 & 0 & \cdots & c_{0}
	\end{pmatrix},
\end{align*}
where $$c_{i}=a_{0}\alpha^{0}(b_{i})+a_{1}\alpha^{1}(b_{i-1})+\cdots +a_{i}\alpha^{i}(b_{i}),~~ 1\leq i\leq n-1
.$$ We denote the elements of ${\rm T}_{n}(R, \alpha)$ by $(a_{0},a_{1},\ldots , a_{n-1})$. If $\alpha $ is the identity endomorphism, then one verifies that ${\rm T}_{n}(R,\alpha )$ is a subring of upper triangular matrix ring ${\rm T}_{n}(R)$.

\begin{corollary}\label{corollary 2.14}
Let $R$ be a ring. Then, the following are equivalent:
\begin{enumerate}
\item
$R$ is a GWNC ring.
\item
${\rm T}_{n}(R,\alpha )$ is a GWNC ring.
\end{enumerate}
\end{corollary}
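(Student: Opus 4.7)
The plan is to reduce the statement to Proposition \ref{proposition 2.8} by exhibiting a nil-ideal $I$ of $S := {\rm T}_n(R,\alpha)$ with $S/I \cong R$.

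First I would define the projection onto the diagonal entry, $\pi \colon S \to R$, $(a_0, a_1, \ldots, a_{n-1}) \mapsto a_0$. Inspection of the multiplication rule in the definition of ${\rm T}_n(R,\alpha)$ shows that the $(0,0)$-entry of a product is $a_0 b_0$, independent of $\alpha$, so $\pi$ is a ring homomorphism. It is clearly surjective, and its kernel is
\[
I := \{(0, a_1, a_2, \ldots, a_{n-1}) : a_i \in R\},
\]
the ideal of strictly upper triangular skew matrices. Hence $S/I \cong R$.

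Next I would verify that $I$ is a nil-ideal. This is essentially the same fact as for ordinary strictly upper triangular matrices: if $A \in I$, then the multiplication formula (with all diagonal entries zero) shows, by an easy induction on $k$, that every entry of $A^k$ on the $j$-th superdiagonal with $j < k$ vanishes. Consequently $A^n = 0$, so each element of $I$ is nilpotent; since $I$ is a two-sided ideal, it is a nil-ideal.

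With $I$ a nil-ideal of $S$ such that $S/I \cong R$, Proposition \ref{proposition 2.8}(i) gives at once that $S$ is GWNC if and only if $R$ is GWNC, completing the proof. The only subtlety — the \textquotedblleft main obstacle\textquotedblright, though a mild one — is confirming that the twist by $\alpha$ does not disturb the nilpotence argument: since every entry on the main diagonal of elements of $I$ is $0$ and the products $\alpha^i(\cdot)$ in the formula for $c_i$ are always multiplied by some $a_j$ with $j \ge 1$, the standard upper-triangular nilpotence argument carries over unchanged.
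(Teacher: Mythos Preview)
Your proof is correct and follows essentially the same approach as the paper: both identify the strictly upper triangular ideal $I$, observe that $S/I \cong R$, verify that $I$ is nil (the paper asserts the slightly stronger $I^n = 0$, while you argue elementwise nilpotence), and then invoke Proposition~\ref{proposition 2.8}. Your version simply supplies more detail on why $\pi$ is a ring homomorphism and why the twist by $\alpha$ does not interfere with nilpotence.
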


\begin{proof}
Choose
	$$I:=\left\{
	\left.
	\begin{pmatrix}
		0 & a_{12} & \ldots & a_{1n} \\
		0 & 0 & \ldots & a_{2n} \\
		\vdots & \vdots & \ddots & \vdots \\
		0 & 0 & \ldots & 0
	\end{pmatrix} \right| a_{ij}\in R \quad (i\leq j )
	\right\}.$$
Then, one easily inspects that $I^{n}=\{0\}$ and that $\dfrac{{\rm T}_{n}(R,\alpha )}{I} \cong R$. Consequently, we apply Proposition \ref{proposition 2.8} to receive the desired result.
\end{proof}

Let us now again $\alpha$ be an endomorphism of $R$. We denote by $R[x,\alpha ]$ the {\it skew polynomial ring} whose elements are the polynomials over $R$, the addition is defined as usual, and the multiplication is defined by the equality $xr=\alpha (r)x$ for any $r\in R$. So, there is a ring isomorphism $$\varphi : \dfrac{R[x,\alpha]}{\langle x^{n}\rangle }\rightarrow {\rm T}_{n}(R,\alpha),$$ given by $$\varphi (a_{0}+a_{1}x+\ldots +a_{n-1}x^{n-1}+\langle x^{n} \rangle )=(a_{0},a_{1},\ldots ,a_{n-1})$$ with $a_{i}\in R$, $0\leq i\leq n-1$. Thus, one finds that ${\rm T}_{n}(R,\alpha )\cong \dfrac{R[x,\alpha ]}{\langle  x^{n}\rangle}$, where $\langle x^{n}\rangle$ is the ideal generated by $x^{n}$.

\medskip

We, thereby, detect the following claim.

\begin{corollary}\label{corollary 2.15}
Let $R$ be a ring with an endomorphism $\alpha$ such that $\alpha (1)=1$. Then, the following are equivalent:
\begin{enumerate}
\item
$R$ is a GWNC ring.
\item
$\dfrac{R[x,\alpha ]}{\langle x^{n}\rangle }$ is a GWNC ring.
\item
$\dfrac{R[[x,\alpha ]]}{\langle x^{n}\rangle }$ is a GWNC ring.
\end{enumerate}
\end{corollary}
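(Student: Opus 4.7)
The plan is to piggyback entirely on Corollary~\ref{corollary 2.14} together with the two ring isomorphisms recalled just before the statement. The equivalence (i)~$\Longleftrightarrow$~(ii) should be essentially a relabelling: the paper has already displayed an explicit isomorphism
\[
\varphi: \dfrac{R[x,\alpha]}{\langle x^n\rangle} \longrightarrow {\rm T}_n(R,\alpha),\qquad a_0+a_1x+\cdots+a_{n-1}x^{n-1}+\langle x^n\rangle \mapsto (a_0,a_1,\ldots,a_{n-1}),
\]
and the GWNC property is obviously preserved under ring isomorphism. So I would just invoke $\varphi$ and quote Corollary~\ref{corollary 2.14} to conclude.

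For (ii)~$\Longleftrightarrow$~(iii), the key observation is that reduction modulo $x^n$ collapses the distinction between polynomials and power series: every element of $R[[x,\alpha]]$ has a unique representative of the form $a_0+a_1 x+\cdots+a_{n-1}x^{n-1}$ modulo $\langle x^n\rangle$, and the multiplication rule $xr=\alpha(r)x$ is the same in both rings. Hence the natural map $R[x,\alpha]/\langle x^n\rangle \to R[[x,\alpha]]/\langle x^n\rangle$ is a ring isomorphism. (The assumption $\alpha(1)=1$ is what guarantees that $\alpha$ extends sensibly to the power series ring so that this identification is literally a ring homomorphism.) Combining this with the equivalence (i)~$\Longleftrightarrow$~(ii) yields (iii).

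The only conceivable obstacle is making sure that the two quotient rings are genuinely isomorphic rather than merely additively isomorphic, i.e., that the twisted multiplication behaves the same way modulo $x^n$ in both settings; but this is immediate once one writes a product of two truncated (skew) series and notices that all terms of degree $\geq n$ vanish in either ring. After that, everything reduces to a one-line citation of the already-proved Corollary~\ref{corollary 2.14}, so no further calculation is needed.
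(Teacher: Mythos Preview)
Your proposal is correct and matches the paper's approach exactly: the paper states Corollary~\ref{corollary 2.15} without an explicit proof, relying on the isomorphism $\varphi: R[x,\alpha]/\langle x^n\rangle \to {\rm T}_n(R,\alpha)$ displayed just before the corollary together with Corollary~\ref{corollary 2.14} for (i)~$\Longleftrightarrow$~(ii), and the identification $R[x,\alpha]/\langle x^n\rangle \cong R[[x,\alpha]]/\langle x^n\rangle$ (parallel to Corollary~\ref{corollary 2.11}(iii)) for the remaining equivalence. Your write-up simply makes these implicit steps explicit.
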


Assuming now that \({\rm L}_n(R) = \left\{
\begin{pmatrix}
    0 & \cdots & 0 & a_1 \\ 0 & \cdots & 0 & a_2 \\ \vdots & \ddots & \vdots & \vdots \\ 0 & \cdots & 0 & a_n
\end{pmatrix}
\in {\rm T}_n(R) : a_i \in R \right \} \subseteq {\rm T}_n(R)\) and \({\rm S}_n(R) = \{(a_{ij}) \in {\rm T}_n(R) : a_{11}= \cdots = a_{nn}\} \subseteq {\rm T}_n(R)\), it is not so difficult to check that the mapping $\varphi: {\rm S}_n(R) \to {\rm T}({\rm S}_{n-1}(R), {\rm L}_{n-1}(R))$, defined as
$$\varphi \left(
\begin{pmatrix}
    a_{11} & a_{12} & \cdots & a_{1n}\\
    0 & a_{11} & \cdots & a_{2n}\\
    \vdots  & \vdots & \ddots  & \vdots \\
    0 & 0 & \cdots & a_{11}
\end{pmatrix}
\right) =
\begin{pmatrix}
    a_{11} & a_{12} & \cdots & a_{1,n-1} & 0 & \cdots & 0 & a_{1n} \\
    0 & a_{11} & \cdots & a_{2,n-1} & 0 & \cdots & 0 & a_{2n} \\
    \vdots  & \vdots  & \ddots  & \vdots  & \vdots  & \ddots  & \vdots  & \vdots  \\
    0 & 0 & \cdots & a_{11} & 0 & \cdots & 0 & a_{n-1,n} \\
    0 & 0 & \cdots & 0 & a_{11} & a_{12} & \cdots & a_{1,n-1} \\
    0 & 0 & \cdots & 0 & 0 & a_{11} & \cdots & a_{2,n-1} \\
    \vdots & \vdots & \ddots  & \vdots & \vdots  & \vdots  & \ddots  & \vdots \\
    0 & 0 & \cdots & 0 & 0 & 0 & \cdots & a_{11} \\
\end{pmatrix},$$
gives ${\rm S}_n(R) \cong {\rm T}({\rm S}_{n-1}(R), {\rm L}_{n-1}(R))$. Notice that this isomorphism is a helpful instrument to study the ring ${\rm S}_n(R)$, because by examining the trivial extension and using induction on $n$, we can extend the result to ${\rm S}_n(R)$.

\medskip

Specifically, we are able to establish truthfulness of the following two statements.

\begin{corollary}\label{corollary 2.16}
Let $R$ be ring. Then, the following items hold:
\begin{enumerate}
\item
For $n \ge 2$, ${\rm S}_n(R)$ is GWNC ring if, and only if, $R$ is a GWNC.
\item
For $n,m \ge 2$, ${\rm A}_{n,m}(R):=R[x,y \mid x^n=yx=y^m=0]$ is GWNC ring if, and only if, $R$ is a GWNC.
\item
For $n,m \ge 2$, ${\rm B}_{n,m}(R):=R[x,y \mid x^n=y^m=0]$ is GWNC ring if, and only if, $R$ is a GWNC.
\end{enumerate}
\end{corollary}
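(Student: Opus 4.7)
The plan is to reduce each of the three items to an application of Proposition~\ref{proposition 2.8}(i), which allows passing GWNC-ness across a quotient by a nil ideal. In every case, the ring under consideration admits a natural nilpotent ideal whose quotient is isomorphic to $R$.

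For (i), I would argue by induction on $n$, leaning on the isomorphism ${\rm S}_n(R) \cong {\rm T}({\rm S}_{n-1}(R), {\rm L}_{n-1}(R))$ displayed just before the statement. The base case $n=2$ is exactly the content of Corollary~\ref{corollary 2.12}, since ${\rm S}_2(R) \cong {\rm T}(R,R)$. For the inductive step, applying Corollary~\ref{corollary 2.11}(i) to the trivial extension on the right-hand side yields that ${\rm S}_n(R)$ is GWNC if and only if ${\rm S}_{n-1}(R)$ is GWNC, and the inductive hypothesis closes the loop. So here the whole task is to justify one invocation of Corollary~\ref{corollary 2.11}(i) at each stage.

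For (ii) and (iii), the key step is to verify that the ideal $I := \langle x, y \rangle$ of ${\rm A}_{n,m}(R)$ (respectively ${\rm B}_{n,m}(R)$) is nilpotent; then $I$ is certainly nil, the quotient modulo $I$ is canonically $R$, and Proposition~\ref{proposition 2.8}(i) gives the equivalence. In ${\rm B}_{n,m}(R)$, using only $x^n = y^m = 0$, any word in $x$ and $y$ of total length at least $n+m-1$ must contain either $x^n$ or $y^m$ as a subword and hence vanishes, so $I^{n+m-1} = 0$. In ${\rm A}_{n,m}(R)$ the additional relation $yx=0$ only restricts further which words can be nonzero, so the same bound (in fact a smaller one) does the job.

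The only real obstacle is bookkeeping, namely checking the nilpotency of $\langle x,y\rangle$ in the two non-commutative presentations of (ii) and (iii). Once nilpotency is in hand, the corollary is a direct transcription of Proposition~\ref{proposition 2.8}(i), and for (i) a single-line induction on top of Corollary~\ref{corollary 2.11}(i). I therefore expect a very short written proof, with the displayed argument structured as induction for (i) and \emph{exhibit-the-ideal} for (ii) and (iii).
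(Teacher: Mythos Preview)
Your overall plan---exhibit a nil (in fact nilpotent) ideal $I$ with quotient $R$ and invoke Proposition~\ref{proposition 2.8}(i)---is exactly the paper's strategy for all three parts. For (i) the paper does not run the induction you outline; it simply takes $I=\{(a_{ij})\in{\rm S}_n(R):a_{11}=0\}$, observes this is nilpotent with ${\rm S}_n(R)/I\cong R$, and is done. Your inductive route via the trivial-extension isomorphism is the one the paper advertises in the paragraph preceding the corollary, so either argument is acceptable; the direct one is shorter.

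There is, however, a genuine slip in your justification for (iii). You treat ${\rm B}_{n,m}(R)$ as a free non-commutative presentation and claim that any word of length $\ge n+m-1$ contains $x^n$ or $y^m$ as a subword. That is false: for $n=m=2$ the alternating word $xyxy\cdots xy$ has arbitrary length yet never contains $x^2$ or $y^2$ consecutively, so in the genuinely non-commutative quotient $\langle x,y\rangle$ is not even nil. The paper's description of elements of ${\rm B}_{n,m}(R)$ as $\sum_{i,j} a_{ij}x^iy^j$ (and likewise the model $R[x,y]/\langle x^2,y^2\rangle\cong{\rm DT}(R,R)$ in Corollary~\ref{corollary 2.13}) shows that $x$ and $y$ are meant to commute. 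With commuting variables your pigeonhole argument is correct: any monomial $x^iy^j$ with $i+j\ge n+m-1$ has $i\ge n$ or $j\ge m$, so $I^{n+m-1}=0$. Just drop the phrase ``non-commutative presentations'' and rephrase the argument in terms of exponents rather than subwords; the same remark applies to (ii).
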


\begin{proof}
\begin{enumerate}
\item	
We assume $I:=\{(a_{ij}) \in {\rm S}_n(R) : a_{11}=0\}$, so evidently $I$ is a nil-ideal of ${\rm S}_n(R)$, and therefore we derive ${\rm S}_n(R)/I \cong R$.
\item
We set $$I:=\{a+\sum_{i=1}^{n-1} b_{i}x^i+ \sum_{j=1}^{m-1}c_{j}y^j \in {\rm A}_{n,m}(R) : a=0\},$$ so apparently $I$ is a nil-ideal of ${\rm A}_{n,m}(R)$, and thus we infer ${\rm A}_{n,m}(R)/I \cong R$.
\item
We put $$I:=\{\sum_{i=0}^{n-1}\sum_{j=0}^{m-1} a_{ij}x^iy^j \in {\rm B}_{n,m}(R) : a_{00}=0\},$$ so elementarily $I$ is a nil-ideal of ${\rm B}_{n,m}(R)$, and so we deduce ${\rm B}_{n,m}(R)/I \cong R$.
\end{enumerate}
\end{proof}

In the other vein, Wang introduced in \cite{11} the matrix ring ${\rm S}_{n,m}(R)$ for a given ring $R$. Then, the matrix ring $S_{n,m}(R)$ can be represented as
$$\left\{ \begin{pmatrix}
	a & b_1 & \cdots & b_{n-1} & c_{1n} & \cdots & c_{1 n+m-1}\\
	\vdots  & \ddots & \ddots & \vdots & \vdots & \ddots & \vdots \\
	0 & \cdots & a & b_1 & c_{n-1,n} & \cdots & c_{n-1,n+m-1} \\
	0 & \cdots & 0 & a & d_1 & \cdots & d_{m-1} \\
	\vdots  & \ddots & \ddots & \vdots & \vdots & \ddots & \vdots \\
	0 & \cdots & 0 & 0  & \cdots & a & d_1 \\
	0 & \cdots & 0 & 0  & \cdots & 0 & a
\end{pmatrix}\in {\rm T}_{n+m-1}(R) : a, b_i, d_j,c_{i,j} \in R \right\}.$$
Also, let ${\rm T}_{n,m}(R)$ be
$$\left\{ \left(\begin{array}{@{}c|c@{}}
	\begin{matrix}
		a & b_1 & b_2 & \cdots & b_{n-1} \\
		0 & a & b_1 & \cdots & b_{n-2} \\
		0 & 0 & a & \cdots & b_{n-3} \\
		\vdots & \vdots & \vdots & \ddots & \vdots \\
		0 & 0 & 0 & \cdots & a
	\end{matrix}
	& \bigzero \\
	\hline
	\bigzero &
	\begin{matrix}
		a & c_1 & c_2 & \cdots & c_{m-1} \\
		0 & a & c_1 & \cdots & c_{m-2} \\
		0 & 0 & a & \cdots & c_{m-3} \\
		\vdots & \vdots & \vdots & \ddots & \vdots \\
		0 & 0 & 0 & \cdots & a
	\end{matrix}
\end{array}\right)\in {\rm T}_{n+m}(R) : a, b_i,c_j \in R \right\}, $$
and let
$${\rm U}_{n}(R)=\left\{ \begin{pmatrix}
	a & b_1 & b_2 & b_3 & b_4 & \cdots & b_{n-1} \\
	0 & a & c_1 & c_2 & c_3 & \cdots & c_{n-2} \\
	0 & 0 & a & b_1 & b_2 & \cdots & b_{n-3} \\
	0 & 0 & 0 & a & c_1 & \cdots & c_{n-4} \\
	\vdots & \vdots & \vdots & \vdots &  &  & \vdots \\
	0 &0 & 0 & 0 & 0 & \cdots & a
\end{pmatrix}\in {\rm T}_{n}(R) :  a, b_i, c_j \in R \right\}.$$	

Thus, we come to the following assertion.

\begin{corollary}\label{corollary 2.17}
Let $R$ be a ring. Then, the following statements are equivalent:
\begin{enumerate}
\item
$R$ is a GWNC ring.
\item
${\rm S}_{n,m}(R)$ is a GWNC ring.
\item
${\rm T}_{n,m}(R)$ is a GWNC ring.
\item
${\rm U}_{n}(R)$ is a GWNC ring.
\end{enumerate}	
\end{corollary}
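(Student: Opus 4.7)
The plan mirrors the strategy used in Corollaries~\ref{corollary 2.14} and~\ref{corollary 2.16}: for each of the three matrix rings $\mathrm{S}_{n,m}(R)$, $\mathrm{T}_{n,m}(R)$, $\mathrm{U}_n(R)$, I would exhibit a nil-ideal $I$ such that the corresponding quotient is isomorphic to $R$, and then invoke Proposition~\ref{proposition 2.8}(i) to conclude the equivalence with (i). In all three cases the candidate ideal is the same: the set of matrices whose common diagonal entry $a$ equals $0$.

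Concretely, for (ii) I would set
\[
I := \{ M \in \mathrm{S}_{n,m}(R) : a = 0 \},
\]
verify that the ``diagonal-entry'' map $\pi : \mathrm{S}_{n,m}(R) \to R$, $M \mapsto a$, is a surjective ring homomorphism (so that $I = \ker \pi$ is a two-sided ideal and $\mathrm{S}_{n,m}(R)/I \cong R$), and observe that each element of $I$ is strictly upper triangular as an element of $\mathrm{T}_{n+m-1}(R)$ and is therefore nilpotent of index at most $n+m-1$; hence $I$ is a nil-ideal. Proposition~\ref{proposition 2.8}(i) then gives (i)~$\Leftrightarrow$~(ii). For (iii) and (iv) I would proceed identically with
\[
I := \{ M \in \mathrm{T}_{n,m}(R) : a = 0 \}
\qquad\text{and}\qquad
I := \{ M \in \mathrm{U}_{n}(R) : a = 0 \},
\]
respectively: in each case $I$ consists of strictly upper triangular matrices inside an $\mathrm{T}_{k}(R)$ (with $k = n+m$, resp.\ $k = n$), so $I^{k} = 0$; the diagonal-entry map is again a surjective ring homomorphism onto $R$ with kernel $I$. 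Proposition~\ref{proposition 2.8}(i) yields the two remaining equivalences.

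The only real obstacle -- and it is entirely a matter of bookkeeping rather than a conceptual difficulty -- is to confirm that the diagonal-entry map is genuinely multiplicative in each of the three rings. For $\mathrm{S}_{n,m}(R)$ the interlocking of the $b_i$-block, the middle $c_{i,j}$-block and the $d_j$-block is arranged precisely so that the $(i,i)$-entry of a product equals the product of the two diagonal scalars; for $\mathrm{T}_{n,m}(R)$ the two upper-triangular diagonal blocks multiply independently and both contribute the same diagonal entry; and for $\mathrm{U}_n(R)$ the claim reduces to the standard fact about upper triangular matrices with constant diagonal. Once multiplicativity is confirmed, nilpotency of $I$ and exactness of the short exact sequence $0 \to I \to \text{(ring)} \to R \to 0$ are immediate, and Proposition~\ref{proposition 2.8} finishes the argument.
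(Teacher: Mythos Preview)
Your proposal is correct and is precisely the approach the paper intends: the corollary is stated without proof immediately after the definitions of $\mathrm{S}_{n,m}(R)$, $\mathrm{T}_{n,m}(R)$, $\mathrm{U}_n(R)$, and the implicit argument is exactly the one you describe --- take the nil-ideal of matrices with vanishing diagonal entry, identify the quotient with $R$, and apply Proposition~\ref{proposition 2.8}(i), in complete analogy with Corollaries~\ref{corollary 2.14} and~\ref{corollary 2.16}.
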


Let us now we recollect that a ring $R$ is called {\it local}, provided $R/J(R)$ is a division ring, that is, every element of $R$ lies in either $U(R)$ or $J(R)$.

\medskip

We are now in establishing a series of preliminary claims before formulating the major assertion.
	
\begin{proposition}\label{proposition 2.18}
Let $R$ be a ring with only trivial idempotents. Then, $R$ is GWNC if, and only if, $R$ is a local ring with $J(R)$ nil.
\end{proposition}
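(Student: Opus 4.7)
The plan splits into the two implications, with the reverse direction being essentially immediate and the forward direction being the real content.

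For $(\Leftarrow)$: if $R$ is local with $J(R)$ nil, the non-units of $R$ coincide with $J(R)$ and are therefore nilpotent. Writing $a = a + 0$ with $a \in {\rm Nil}(R)$ and $0 \in {\rm Id}(R)$ exhibits every non-unit as weakly nil-clean, so $R$ is GWNC.

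For $(\Rightarrow)$: assume $R$ is GWNC with ${\rm Id}(R) = \{0,1\}$. Lemma \ref{lemma 2.4} gives that $J(R)$ is nil, so only locality remains. My plan is to prove $R \setminus U(R) = {\rm Nil}(R)$ and that this set is a two-sided ideal. For the equality: any non-unit $a$ satisfies $a = q \pm e$ with $q \in {\rm Nil}(R)$ and $e \in \{0,1\}$; the case $e = 1$ is impossible because then $a = q \pm 1 \in U(R)$ (as $1 \pm q$ is a unit for nilpotent $q$), contradicting $a \notin U(R)$, so $e = 0$ and $a = \pm q \in {\rm Nil}(R)$. The reverse inclusion is obvious since a nilpotent element is never a unit.

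For the ideal property I must argue carefully in the noncommutative setting. Given $a \in {\rm Nil}(R)$ of nilpotency index $m$ and $r \in R$: if $ar$ were a unit with right inverse $s$, then $a(rs) = 1$, and left-multiplying by $a^{m-1}$ would produce $a^{m-1} = 0$, contradicting minimality of $m$; the case $ra$ is symmetric. For additive closure, if $a + b = u \in U(R)$ with $a,b \in {\rm Nil}(R)$, then $u^{-1}a = 1 - u^{-1}b$, and since $u^{-1}b$ is a non-unit (because $b$ is), the preceding step shows it is nilpotent, so $1 - u^{-1}b$ is a unit, forcing $a \in U(R)$, a contradiction. Thus $R \setminus U(R) = {\rm Nil}(R)$ is a proper two-sided ideal, which necessarily coincides with $J(R)$, so $R$ is local. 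The main delicate point is this noncommutative ideal argument, where the absence of commutativity forces careful tracking of which side inverses sit on; everything else is a direct unpacking of definitions together with the already-established Lemma \ref{lemma 2.4}.
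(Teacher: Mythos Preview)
Your proof is correct and follows essentially the same approach as the paper: both show that the hypothesis ${\rm Id}(R)=\{0,1\}$ forces every non-unit to be nilpotent, and then conclude locality. The only difference is that the paper invokes \cite[Proposition 19.3]{1} for the implication ``$R\setminus U(R)\subseteq {\rm Nil}(R)\Rightarrow R$ local'', whereas you prove this step directly by showing ${\rm Nil}(R)=R\setminus U(R)$ is a two-sided ideal.
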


\begin{proof}
Assuming $R$ is a GWNC ring, Lemma \ref{lemma 2.4} insures that $J(R)$ is nil. Now, if $a \notin U(R)$, then we have either $a = q \pm 1$ or $a = q \pm 0$, where $q \in {\rm Nil}(R)$. Since $a$ is not a unit, it must be that $a = q \pm 0$, implying $a = q \in {\rm Nil}(R)$. So, according to \cite[Proposition 19.3]{1}, $R$ is a local ring.

Conversely, suppose $R$ is a local ring with nil Jacobson radical $J(R)$. So, for each $a \notin U(R)$, we have $a \in J(R) \subseteq {\rm Nil}(R)$, whence $a$ is a weakly nil-clean element, as required.
\end{proof}

\begin{proposition}\label{proposition 2.19}
Let $R$ and $S$ be rings. If $R\times S$ is GWNC, then $R$ and $S$ are weakly nil-clean.
\end{proposition}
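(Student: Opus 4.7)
My plan is to exploit the componentwise behaviour of nilpotents, idempotents, and units in a direct product, and to produce a non-unit of $R\times S$ from an arbitrary element of $R$ (resp.\ $S$) by pairing it with $0$.

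First I would record the standard identifications
\[
U(R\times S)=U(R)\times U(S),\quad \mathrm{Nil}(R\times S)=\mathrm{Nil}(R)\times\mathrm{Nil}(S),\quad \mathrm{Id}(R\times S)=\mathrm{Id}(R)\times\mathrm{Id}(S),
\]
which are immediate from the componentwise definitions of multiplication and powers. We may assume $R,S\neq 0$ (otherwise one factor is the zero ring, which is trivially weakly nil-clean, and the other factor equals $R\times S$).

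Next, to show $R$ is weakly nil-clean, I would pick an arbitrary $a\in R$ and form $(a,0)\in R\times S$. Because $0\notin U(S)$, the pair $(a,0)$ lies outside $U(R\times S)$. Applying the GWNC hypothesis to $R\times S$, we obtain $(q_1,q_2)\in\mathrm{Nil}(R\times S)$, $(e_1,e_2)\in\mathrm{Id}(R\times S)$, and a sign choice, such that
\[
(a,0)=(q_1,q_2)\pm(e_1,e_2).
\]
Projecting onto the first coordinate yields $a=q_1\pm e_1$ with $q_1\in\mathrm{Nil}(R)$ and $e_1\in\mathrm{Id}(R)$, so $a$ is weakly nil-clean in $R$. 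Since $a$ was arbitrary, $R$ is weakly nil-clean.

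Finally, by the symmetric argument applied to $(0,b)$ for an arbitrary $b\in S$, we obtain $b=q_2'\pm e_2'$ with $q_2'\in\mathrm{Nil}(S)$ and $e_2'\in\mathrm{Id}(S)$, so $S$ is likewise weakly nil-clean. There is no real obstacle here; the only subtlety is to notice that whereas GWNC only controls non-units, the embedding of $R$ as $R\times\{0\}$ turns \emph{every} element of $R$ into a non-unit of $R\times S$, which is precisely what upgrades the conclusion from GWNC to weakly nil-clean on each factor.
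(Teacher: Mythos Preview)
Your proof is correct and follows essentially the same approach as the paper: pair an arbitrary $a\in R$ with $0$ in the second slot to force a non-unit of $R\times S$, apply the GWNC hypothesis, and read off the first coordinate. Your version is in fact slightly more careful than the paper's, which writes the decomposition as $(a,0)=(q,0)\pm(e,0)$ without justifying why the second components may be taken to be zero; your projection argument avoids this minor imprecision.
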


\begin{proof}
Let $a\in R$ is an arbitrary element, so $(a,0) \in R \times S$ is not unit. Then, we have $(a,0)=(q,0)\pm(e,0)$, where $(q,0)$ is a nilpotent and $(e,0)$ is an idempotent in $R \times S$, whence $a=q\pm e$, where $q$ is a nilpotent and $e$ is an idempotent in $R$. So, $a$ is a weakly nil-clean element. Thus, $R$ is weakly nil-clean ring. Similarly, $S$ is a weakly nil-clean ring.
\end{proof}

\begin{proposition}\label{proposition 2.20}	
Let $R_{i}$ be a ring for all $i\in I$. If $\prod^{n}_{i=1}R_{i}$ is GWNC, then each $R_{i}$ is GWNC.	
\end{proposition}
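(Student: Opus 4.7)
The plan is to invoke Corollary~\ref{corollary 2.9}. For each index $k \in \{1, \ldots, n\}$, the canonical projection $\pi_k : \prod_{i=1}^{n} R_i \to R_k$ sending $(r_1, \ldots, r_n) \mapsto r_k$ is a surjective ring homomorphism, so $R_k$ is a homomorphic image of $\prod_{i=1}^{n} R_i$. Since the product is assumed to be GWNC, Corollary~\ref{corollary 2.9} immediately gives that $R_k$ is GWNC. As $k$ was arbitrary, each factor $R_k$ is GWNC.

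Alternatively, one can imitate the direct lifting argument of Proposition~\ref{proposition 2.19}. Given any non-unit $a_k \in R_k \setminus U(R_k)$, form the tuple $a = (0, \ldots, 0, a_k, 0, \ldots, 0) \in \prod_{i=1}^{n} R_i$, which fails to be a unit because at least one coordinate is $0$. The GWNC hypothesis on the product supplies an idempotent $(e_1, \ldots, e_n)$ and a nilpotent $(q_1, \ldots, q_n)$ with $a = (q_1, \ldots, q_n) \pm (e_1, \ldots, e_n)$, and reading off the $k$-th coordinate produces $a_k = q_k \pm e_k$ with $e_k \in {\rm Id}(R_k)$ and $q_k \in {\rm Nil}(R_k)$.

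There is no substantial obstacle; the only mild subtlety is that the sign in the $\pm$ decomposition is fixed once, at the level of the product, and is therefore inherited uniformly in every coordinate, which is exactly what the definition of GWNC for $R_k$ demands. I would present the first (one-line) argument, since Corollary~\ref{corollary 2.9} is already available and the second approach is merely an unpacking of its proof.
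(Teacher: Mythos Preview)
Your proposal is correct and your first argument is exactly the paper's own proof: the authors simply write ``It is immediate referring to Corollary~\ref{corollary 2.9},'' invoking the projection onto each factor. Your alternative coordinate-wise argument is also fine but, as you note, merely unpacks that corollary.
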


\begin{proof}
It is immediate referring to Corollary \ref{corollary 2.9}.	
\end{proof}

\begin{proposition}\label{proposition 2.21}
The direct product $\prod_{i=1}^{n} R_i$ is GWNC for $n \ge 3$ if, and only if, each $R_i$ is weakly nil-clean and at most one of them is {\it not} nil-clean.
\end{proposition}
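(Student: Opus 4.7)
The plan is to prove both implications directly, exploiting the componentwise structure of the product while dealing carefully with the single-sign constraint in the GWNC definition: any decomposition in $\prod R_i$ must be uniformly $q+e$ or uniformly $q-e$, not a mixture.

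For the necessity direction, I would first show each $R_i$ is weakly nil-clean by a projection argument. Given $a \in R_i$, embed it as $\alpha = (0,\dots,0,a,0,\dots,0)$; since $n \ge 2$ there is always another zero slot, so $\alpha$ is a non-unit, and the GWNC hypothesis yields $\alpha = q \pm e$, whose $i$-th coordinate is a weakly nil-clean expression for $a$. I would then show at most one factor fails to be nil-clean by contradiction: if $b \in R_1$ and $c \in R_2$ are both non-nil-clean, form $\beta = (b, -c, 0, \dots, 0)$, which is a non-unit because $n \ge 3$ provides a zero slot. The GWNC hypothesis gives $\beta = q + e$ or $\beta = q - e$. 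In the first case the first-coordinate projection makes $b$ nil-clean; in the second case the second-coordinate projection reads $-c = q_2 - e_2$, i.e.\ $c = e_2 + (-q_2)$, so $c$ is nil-clean. The key trick here is placing $-c$ rather than $c$ in the second slot, which turns both sign possibilities into contradictions.

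For the sufficiency direction, assume each $R_i$ is weakly nil-clean and, up to reindexing, $R_2, \dots, R_n$ are nil-clean while $R_1$ is merely weakly nil-clean. Given a non-unit $a = (a_1,\dots,a_n)$, the weakly nil-clean hypothesis writes $a_1 = q_1 + e_1$ or $a_1 = q_1 - e_1$. The crucial observation is that in a nil-clean ring, every element admits both a $+$ and a $-$ weakly nil-clean expression: indeed, if $R_i$ is nil-clean then $-a_i$ is also nil-clean, say $-a_i = q_i'' + e_i''$, and negating gives $a_i = (-q_i'') - e_i''$, with $-q_i''$ still nilpotent. Thus whichever sign $a_1$ imposes, I can match it in each $R_i$ with $i \ge 2$ and assemble a global decomposition $a = Q \pm E$ with $Q$ nilpotent and $E$ idempotent componentwise.

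The main obstacle, and the conceptual heart of the argument, is the sign-matching constraint: the GWNC representation requires a single uniform sign across all coordinates. This is precisely why the hypothesis ``at most one factor fails to be nil-clean'' is sharp: a second weakly-nil-clean-but-not-nil-clean factor would force its own rigid sign, which could conflict with $R_1$'s forced sign and break the assembly. The flexibility of nil-clean factors to provide \emph{either} sign is what unlocks the converse. In the forward direction, the role of the $n \ge 3$ assumption is to guarantee a free zero slot so that the test element $\beta$ is automatically a non-unit, regardless of whether the non-nil-clean witnesses $b, c$ happen to be units in their respective rings.
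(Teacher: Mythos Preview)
Your argument is correct in both directions, but the route differs from the paper's. The paper never builds a test element like your $\beta=(b,-c,0,\dots,0)$; instead it applies Proposition~\ref{proposition 2.19} to the splitting $\prod_{i=1}^{n} R_i = \bigl(\prod_{i=1}^{n-1}R_i\bigr)\times R_n$ to force $\prod_{i=1}^{n-1}R_i$ to be weakly nil-clean, and then invokes the external result \cite[Proposition~3]{12} (which characterizes when a finite product is weakly nil-clean) to extract the ``at most one non-nil-clean factor'' conclusion; a second application of Proposition~\ref{proposition 2.19} to the grouping $(R_1\times\cdots\times R_{n-2})\times(R_{n-1}\times R_n)$ handles the last two factors. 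The converse is again delegated entirely to \cite[Proposition~3]{12}. Your approach is more self-contained: the sign-flip trick with $(b,-c,0,\dots,0)$ replaces the external citation in the forward direction, and your observation that nil-clean rings supply decompositions of \emph{either} sign is exactly the content of \cite[Proposition~3]{12} unpacked. The paper's proof is shorter by outsourcing; yours makes visible precisely where the uniform-sign constraint bites and why the single exceptional factor is tolerable. One minor remark: in your sufficiency argument you begin ``Given a non-unit $a$'' but never use non-invertibility, so you are in fact proving the stronger statement that the product is weakly nil-clean, which is also what \cite[Proposition~3]{12} gives.
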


\begin{proof}
($\Rightarrow$). Assume \(\prod_{i=1}^{n} R_i\) is a GWNC ring. Therefore, by Proposition \ref{proposition 2.19}, \(\prod_{i=1}^{n-1} R_i\) and \(R_n\) are weakly nil-clean rings. Thus, owing to \cite[Proposition 3]{12}, with no loss of generality, we may assume that, for each \(1 \le i \le n-2\), \(R_i\) is a nil-clean ring. Again, since \[\prod_{i=1}^{n} R_i = (R_1 \times \cdots \times R_{n-2}) \times (R_{n-1} \times R_n)\], Proposition \ref{proposition 2.19} implies that \(R_{n-1} \times R_n\) is weakly nil-clean. Therefore, \cite[Proposition 3]{12} allows us to assume that \(R_{n-1}\) is nil-clean and \(R_n\) is weakly nil-clean, as needed.\\
($\Leftarrow$). It follows directly from \cite[Proposition 3]{12}.
\end{proof}

\begin{example}\label{example 2.22}
The ring $\mathbb{Z}_3 \times \mathbb{Z}_3$ is GWNC, but $\mathbb{Z}_3$ is {\it not} nil-clean. The ring $\mathbb{Z}_6$ is weakly nil-clean, but $\mathbb{Z}_6 \times \mathbb{Z}_6$ is {\it not} GWNC. 	
\end{example}

\begin{corollary}\label{corollary 2.23}
Let $L=\prod_{i \in I} R_i$ be the direct product of rings $R_i \cong R$ and $|I| \geq 3$. Then, $L$ is a GWNC ring if, and only if, $L$ is a GNC ring if, and only if, $L$ is nil-clean if, and only if, $R$ is nil-clean.
\end{corollary}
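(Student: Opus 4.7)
The plan is to close a four-term cycle
\[
R \text{ nil-clean} \;\Rightarrow\; L \text{ nil-clean} \;\Rightarrow\; L \text{ GNC} \;\Rightarrow\; L \text{ GWNC} \;\Rightarrow\; R \text{ nil-clean},
\]
which will make all four conditions pairwise equivalent.

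For the first step I would verify that if $R$ is nil-clean, then so is $L$: given $a=(a_i)\in L$, a componentwise nil-clean decomposition $a_i=e_i+q_i$ in each $R_i\cong R$ assembles into $a=(e_i)+(q_i)$, with $(e_i)$ idempotent in $L$ and $(q_i)$ nilpotent in $L$; the latter point uses that the nilpotency indices occurring in the single ring $R$ can be uniformly controlled so that one common bound serves every coordinate. The middle two implications $L\text{ nil-clean}\Rightarrow L\text{ GNC}\Rightarrow L\text{ GWNC}$ are then immediate from items (3) and (10) of Example~\ref{example 2.1}.

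The closing implication $L\text{ GWNC}\Rightarrow R\text{ nil-clean}$ is the crux of the proof and is where the hypothesis $|I|\geq 3$ really enters. Choose three distinct indices $i_1,i_2,i_3\in I$ and consider the canonical projection $\pi\colon L \to R_{i_1}\times R_{i_2}\times R_{i_3}\cong R^{3}$, which is a surjective ring homomorphism. By Corollary~\ref{corollary 2.9} the homomorphic image $R^{3}$ is again GWNC, so Proposition~\ref{proposition 2.21} applied with $n=3$ tells us that each of the three factors is weakly nil-clean and at most one of them may fail to be nil-clean. Since all three factors are isomorphic to the single ring $R$, the only way to satisfy the ``at most one'' clause is for all three to be nil-clean. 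Hence $R$ itself is nil-clean and the cycle closes.

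The step I expect to be the most delicate is the very first one, $R\text{ nil-clean}\Rightarrow L\text{ nil-clean}$ when $I$ is infinite: producing a genuine nilpotent element of $L$ from its componentwise pieces requires that the nilpotency indices across all coordinates be simultaneously bounded, and this is the only place where the argument relies on something beyond pure definitional manipulation. The remaining implications are either formal consequences of material already in the paper (Example~\ref{example 2.1} and Corollary~\ref{corollary 2.9}) or a direct application of Proposition~\ref{proposition 2.21} to the three-fold projection $R^{3}$.
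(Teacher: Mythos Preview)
Your four-step cycle, and in particular the closing step via projection onto three coordinates combined with Proposition~\ref{proposition 2.21}, is exactly what the paper intends; the corollary is stated there without proof as an immediate consequence of Proposition~\ref{proposition 2.21}.

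You are right to flag the first implication as the delicate point, but the assertion that ``the nilpotency indices occurring in the single ring $R$ can be uniformly controlled'' is in fact false when $I$ is infinite, so that step cannot be completed as written. Take $R=\mathbb{Z}_2[x_1,x_2,\dots]/(x_k^{\,k+1}:k\ge 1)$: this is a commutative local ring with $R/J(R)\cong\mathbb{Z}_2$ and $J(R)$ nil, hence (strongly) nil-clean. Its only idempotents are $0$ and $1$, and $1+x_k$ is a unit in characteristic~$2$ since $(1+x_k)^{2^m}=1$ once $2^m>k$; thus the unique nil-clean decomposition of $x_k$ is $0+x_k$, whose nilpotent part has index $k+1$. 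No uniform bound exists. Worse, in $L=R^{\mathbb{N}}$ the element $(x_1,x_2,\dots)$ lies in $J(L)=\prod J(R)$ but is not nilpotent, so $J(L)$ is not nil and, by Lemma~\ref{lemma 2.4}, $L$ is not even GWNC. Hence the corollary itself fails for infinite $I$; it should be read with $I$ finite and $|I|\ge 3$, consistent with its placement between Proposition~\ref{proposition 2.21} and Corollary~\ref{corollary 2.24}. Under that reading your argument is complete, the ``uniform bound'' being simply the maximum of finitely many nilpotency indices.
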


\begin{corollary}\label{corollary 2.24}
For any $n \geq 3$, the ring $R^n$ is GWNC if, and only if, $R^n$ is GNC if, and only if, $R$ is nil-clean.
\end{corollary}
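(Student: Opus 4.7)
My plan is to observe that Corollary~\ref{corollary 2.24} is literally the specialization of Corollary~\ref{corollary 2.23} to the case where the index set is $I=\{1,\ldots,n\}$ of cardinality $n\ge 3$ and each $R_i=R$. Since Corollary~\ref{corollary 2.23} delivers the four-fold equivalence ``GWNC $\Leftrightarrow$ GNC $\Leftrightarrow$ nil-clean $\Leftrightarrow$ $R$ nil-clean'' already in this situation, no further work is required beyond invoking it.

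For clarity I would also spell out the direct chain of reasoning, which uses only earlier results in the paper. If $R^n$ is GWNC with $n\ge 3$, Proposition~\ref{proposition 2.21} forces every factor to be weakly nil-clean and at most one of the $n$ factors to fail to be nil-clean. Because all factors are the \textbf{same} ring $R$ and $n\ge 3$, the ``at most one failure'' condition cannot be consistent with $R$ failing to be nil-clean (that would produce $n\ge 3$ failures), so $R$ itself is nil-clean.

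Conversely, if $R$ is nil-clean, then the finite direct product $R^n$ is nil-clean by a coordinate-wise decomposition; by Example~\ref{example 2.1}(iii),(x) this gives nil-clean $\Rightarrow$ GNC $\Rightarrow$ GWNC, so $R^n$ enjoys both of the weaker properties. Thus the three conditions in the statement are equivalent.

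There is no genuine obstacle here: all the substantive content is already contained in Proposition~\ref{proposition 2.21} and Corollary~\ref{corollary 2.23}. The one point worth highlighting is the rigidity produced by taking all factors equal to a common ring~$R$: it converts the ``at most one non-nil-clean factor'' tolerance available in the heterogeneous case into an outright requirement that $R$ be nil-clean, which is exactly why the three ostensibly different properties collapse for $R^n$.
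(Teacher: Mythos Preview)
Your proposal is correct and matches the paper's treatment: the paper gives no separate proof of Corollary~\ref{corollary 2.24}, presenting it as the immediate specialization of Corollary~\ref{corollary 2.23} (itself an unproven corollary of Proposition~\ref{proposition 2.21}) to the finite index set $I=\{1,\dots,n\}$. Your supplementary direct argument via Proposition~\ref{proposition 2.21} and the coordinate-wise nil-clean decomposition is exactly the reasoning that underlies both corollaries, so nothing is missing.
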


\begin{proposition}\label{proposition 2.25}
Let $R$ be a ring. Then, the following are equivalent:
\begin{enumerate}
\item
$R$ is nil-clean.
\item
${\rm T}_{n}(R)$ is weakly nil-clean for all $n \in \mathbb{N}$.
\item
${\rm T}_n(R)$ is weakly nil-clean for some $n \geq 3$.
\item
${\rm T}_n(R)$ is GWNC for some $n \geq 3$.
\end{enumerate}
\end{proposition}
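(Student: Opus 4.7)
The plan is to prove the cycle (i) $\Rightarrow$ (ii) $\Rightarrow$ (iii) $\Rightarrow$ (iv) $\Rightarrow$ (i), exploiting the natural nil-ideal $I$ of strictly upper triangular matrices inside ${\rm T}_n(R)$, which satisfies ${\rm T}_n(R)/I \cong R^n$ via the diagonal map.

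For (i) $\Rightarrow$ (ii), I would argue that if $R$ is nil-clean, then so is $R^n$ (a finite direct product of nil-clean rings is nil-clean), and then lift this through the nil ideal $I$ to deduce that ${\rm T}_n(R)$ itself is nil-clean (this is a standard fact, since idempotents and nilpotents lift modulo nil ideals). Since every nil-clean ring is weakly nil-clean, (ii) follows. The implication (ii) $\Rightarrow$ (iii) is trivial by specializing to $n \ge 3$, and (iii) $\Rightarrow$ (iv) is immediate from Example~\ref{example 2.1}(v), which states that any weakly nil-clean ring is GWNC.

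The crucial implication is (iv) $\Rightarrow$ (i). Assume ${\rm T}_n(R)$ is GWNC for some $n \geq 3$. Then since $R^n$ is a homomorphic image of ${\rm T}_n(R)$ (via the diagonal projection, whose kernel $I$ is a nil ideal), Corollary~\ref{corollary 2.9} forces $R^n$ to be GWNC. But now Corollary~\ref{corollary 2.24} asserts that for $n \geq 3$, $R^n$ being GWNC is equivalent to $R$ being nil-clean, so we are done.

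The only real subtlety is making sure (i) $\Rightarrow$ (ii) is handled cleanly: the quotient map ${\rm T}_n(R) \to R^n$ has nil kernel, and using Proposition~\ref{proposition 2.8} (or its nil-clean analogue applied to lifts of idempotents through nil ideals) one gets from $R^n$ nil-clean to ${\rm T}_n(R)$ nil-clean. The hypothesis $n \geq 3$ is \emph{essential} only in the reverse direction (iv) $\Rightarrow$ (i), where Corollary~\ref{corollary 2.24} requires $n \geq 3$ to collapse GWNC to nil-clean; Example~\ref{example 2.22} shows this threshold is sharp, since $\mathbb{Z}_3 \times \mathbb{Z}_3$ is GWNC without $\mathbb{Z}_3$ being nil-clean. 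Apart from that, the chain of implications is a matter of stitching together already-proved results, so no hard calculation is expected.
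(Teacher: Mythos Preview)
Your proposal is correct and follows essentially the same route as the paper: both arguments hinge on the nil ideal $I$ of strictly upper triangular matrices with ${\rm T}_n(R)/I\cong R^n$, and both close the cycle via Corollary~\ref{corollary 2.24} for (iv)~$\Rightarrow$~(i). The only cosmetic difference is that for (i)~$\Rightarrow$~(ii) the paper simply cites \cite[Theorem~4.1]{4}, whereas you unpack that citation by arguing $R^n$ is nil-clean and lifting through $I$---which is precisely how Diesl's result is proved.
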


\begin{proof}
(i) $\Rightarrow$ (ii). This follows employing \cite[Theorem 4.1]{4}.\\
(ii) $\Rightarrow$ (iii) $\Rightarrow$ (iv). These two implications are trivial, so we remove the details.\\
(iv) $\Rightarrow$ (i). Setting $I:=\{ (a_{ij})\in {\rm T}_{n}(R)| a_{ii}=0\}$, we obtain that it is a nil-ideal in ${\rm T}_{n}(R)$ with $\dfrac{{\rm T}_{n}(R)}{I}\cong R^{n}$. Therefore, Corollary \ref{corollary 2.24} is applicable to get the pursued result.
\end{proof}

\begin{example}\label{example 2.26}
The ring ${\rm T}_{2}(\mathbb{Z}_3)$ is GWNC, but $\mathbb{Z}_3$ is {\it not} nil-clean. The ring $\mathbb{Z}_6$ is weakly nil-clean, but ${\rm T}_{2}(\mathbb{Z}_6)$ is {\it not} GWNC.  	
\end{example}

\begin{lemma}\label{lemma 2.27}
Let $R$ be a ring and $2 \in J(R)$. Then, the following two points are equivalent:
\begin{enumerate}
\item
$R$ is a GWNC ring.
\item
$R$ is a GNC ring.
\end{enumerate}
\end{lemma}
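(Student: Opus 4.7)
The implication (ii) $\Rightarrow$ (i) is immediate from the definitions, since any representation $a = e + q$ with $e \in {\rm Id}(R)$ and $q \in {\rm Nil}(R)$ is already a GWNC representation (taking the ``$+$'' sign in $q \pm e$).

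For the nontrivial direction (i) $\Rightarrow$ (ii), the plan is to show that when $2 \in J(R)$ we can always convert a GWNC representation with the ``$-$'' sign into a GNC one. Let $a \in R \setminus U(R)$. By hypothesis there exist $e \in {\rm Id}(R)$ and $q \in {\rm Nil}(R)$ together with a choice of sign $\varepsilon \in \{+1,-1\}$ such that $a = q + \varepsilon e$. If $\varepsilon = +1$, this is already a GNC representation and there is nothing to do.

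The only case requiring work is $a = q - e$. I would rewrite
\[
a = q - e = (q - 2e) + e.
\]
Since $2 \in J(R)$, we have $2e \in J(R)$, and so $-2e \in J(R)$ as well. By Lemma~\ref{lemma 2.4}, $J(R)$ is nil, and then by Corollary~\ref{corollary 2.7} we have ${\rm Nil}(R) + J(R) = {\rm Nil}(R)$; consequently
\[
q - 2e \,=\, q + (-2e) \,\in\, {\rm Nil}(R) + J(R) \,=\, {\rm Nil}(R).
\]
Thus $a = (q - 2e) + e$ is a decomposition of $a$ as a sum of a nilpotent element and an idempotent, which is exactly the GNC condition. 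Since $a \notin U(R)$ was arbitrary, $R$ is GNC.

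The only subtle point is ensuring that ${\rm Nil}(R)$ is closed under addition by elements of $J(R)$ in a potentially noncommutative setting; this is precisely why Corollary~\ref{corollary 2.7} is invoked rather than a naive ``sum of nilpotents is nilpotent'' argument. Everything else is a one-line rewriting, so I do not anticipate any genuine obstacle.
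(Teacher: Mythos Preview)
Your argument is correct. It is, however, a genuinely different route from the paper's. The paper passes to the quotient $R/J(R)$: since $2\in J(R)$, this quotient has characteristic~$2$, so the GWNC and GNC conditions coincide there trivially; then the paper lifts GNC back to $R$ by invoking \cite[Proposition~2.11]{8} together with the fact (from Proposition~\ref{proposition 2.8}) that $J(R)$ is nil. Your proof instead stays inside $R$ the whole time and performs a direct conversion of the decomposition, absorbing the correction term $-2e\in J(R)$ into the nilpotent part via Corollary~\ref{corollary 2.7}. This is more elementary and entirely self-contained within the present paper, avoiding the external reference; the paper's approach, by contrast, is structural and makes transparent why $2\in J(R)$ is the natural hypothesis (it is exactly what collapses the $\pm$ in the quotient).
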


\begin{proof}
(ii) $\Longrightarrow$ (i). It is straightforward.\\
(i) $\Longrightarrow$ (ii). Note that $\dfrac{R}{J(R)}$ is of characteristic $2$, because $2 \in J(R)$, and so $a=-a$ for each $a \in \dfrac{R}{J(R)}$. That is why, $\dfrac{R}{J(R)}$ is a GNC ring, and so we can invoke \cite[Proposition 2.11]{8} as $J(R)$ is nil in virtue of Proposition \ref{proposition 2.8}.
\end{proof}

A ring $R$ is said to be {\it exchange} if, for any $a\in R$, there exists an idempotent $e\in aR$ such that $1-e\in (1-a)R$ (see, e.g., \cite{2}). Notice that every clean ring is exchange, whereas the converse is true in the abelian case (see \cite[Proposition 1.8]{2}). Moreover, a ring $R$ is said to be {\it weakly exchange} if, for any $a\in R$, there exists an idempotent $e\in aR$ such that $1-e\in (1-a)R$ or $1-e\in (1+a)R$ (see, e.g., \cite{20}). Note that any weakly clean ring is weakly exchange, while the converse is valid for abelian rings (see \cite[Theorem 2.1]{20}).
	
\begin{lemma}\label{lemma 2.28}
Let $R$ be a ring and. Then, the following are equivalent:
\begin{enumerate}
\item
$R$ is a strongly weakly nil-clean ring.
\item
$R$ is both WUU and GWNC.
\end{enumerate}
\end{lemma}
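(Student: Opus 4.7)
The plan is to prove the two implications separately. For (i) $\Rightarrow$ (ii), the GWNC containment is immediate by forgetting the commutation clause in the strongly weakly nil-clean definition: any non-unit $a$ satisfies $a = e + b$ or $-a = e + b$ for some idempotent $e$ and nilpotent $b$, so $a \in {\rm Nil}(R) \pm {\rm Id}(R)$. For the WUU conclusion, fix $u \in U(R)$ and assume $u = e + b$ is a strongly nil-clean decomposition (the case $-u = e + b$ is symmetric). Since $eb = be$, one checks that $u$ commutes with both $e$ and $b$ (and hence with $1 - e$), and that $u(1-e) = b(1-e)$ is nilpotent. Multiplying on the left by $u^{-1}$, the element $1 - e = u^{-1} b (1-e)$ is a product of pairwise commuting factors, one of which is nilpotent, and so is itself nilpotent; being simultaneously idempotent and nilpotent, $1 - e = 0$, so $e = 1$ and $u = 1 + b \in 1 + {\rm Nil}(R)$. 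This establishes $U(R) \subseteq {\rm Nil}(R) \pm 1$, i.e.\ WUU.

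For (ii) $\Rightarrow$ (i), let $a \in R$. If $a \in U(R)$, WUU supplies $n \in {\rm Nil}(R)$ with $a = 1 + n$ or $a = -1 + n$; because $1$ is a central idempotent, $a = 1 + n$ is strongly nil-clean in the first case, and $-a = 1 + (-n)$ is strongly nil-clean in the second. If $a \notin U(R)$, GWNC provides $a = q + e$ or $a = q - e$ with $q \in {\rm Nil}(R)$ and $e \in {\rm Id}(R)$, but the definition guarantees no commutation between $q$ and $e$.

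The main obstacle is to upgrade this possibly non-commuting weakly nil-clean decomposition of a non-unit to a strongly nil-clean decomposition of $a$ or of $-a$. My approach is to first prove an auxiliary structural lemma: in a ring that is both WUU and GWNC, every idempotent commutes with every nilpotent. Once this is in hand, the decomposition $a = q + e$ is automatically strongly nil-clean, while in the ``$-$'' case one rewrites $-a = e + (-q)$ with $e(-q) = (-q)e$ to witness that $-a$ is strongly nil-clean.

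To establish the commutation lemma I would first invoke Lemma~\ref{lemma 2.4} to ensure that $J(R)$ is nil, then split on whether $2 \in J(R)$. If $2 \in J(R)$, Lemma~\ref{lemma 2.27} upgrades GWNC to GNC; simultaneously the WUU hypothesis collapses to UU, since $-1 = 1 - 2 \in 1 + {\rm Nil}(R)$ combined with Corollary~\ref{corollary 2.7} gives ${\rm Nil}(R) - 1 \subseteq {\rm Nil}(R) + 1$, after which the classical fact that UU rings are abelian finishes the sub-case. If $2 \notin J(R)$, I plan to test the WUU condition against the self-inverse unit $1 - 2e$ and against units of the form $1 + er(1-e) - (1-e)re$ built from an idempotent $e$ and an arbitrary $r \in R$, then use a Peirce decomposition relative to $e$ to force the off-diagonal components to vanish and conclude $er = re$. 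This last Peirce-theoretic step, especially in the presence of potential $2$-torsion obstructions, is the piece I expect to be the most delicate.
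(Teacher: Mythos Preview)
Your argument for (i) $\Rightarrow$ (ii) is correct and in fact more self-contained than the paper's, which simply cites \cite[Proposition 2.1]{14} for the WUU conclusion.

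The implication (ii) $\Rightarrow$ (i), however, has a genuine gap. Your auxiliary lemma --- that in a WUU and GWNC ring every idempotent commutes with every nilpotent --- is false. Take $R = {\rm T}_2(\mathbb{Z}_2)$: this ring is strongly nil-clean (hence strongly weakly nil-clean, hence both WUU and GWNC), yet the idempotent $\left(\begin{smallmatrix}1&0\\0&0\end{smallmatrix}\right)$ does not commute with the nilpotent $\left(\begin{smallmatrix}0&1\\0&0\end{smallmatrix}\right)$. In particular the ``classical fact that UU rings are abelian'' that you invoke in the $2\in J(R)$ sub-case is not a fact: ${\rm T}_2(\mathbb{Z}_2)$ is UU but not abelian. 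The point is that strongly (weakly) nil-clean only demands the \emph{existence} of a commuting decomposition for each element, not that every decomposition commutes; your strategy of upgrading an arbitrary GWNC decomposition $a = q \pm e$ to a commuting one by forcing global commutation of idempotents with nilpotents is therefore doomed.

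The paper takes a completely different route for (ii) $\Rightarrow$ (i): it observes via Corollary~\ref{corollary 2.3} that a GWNC ring is weakly clean, hence weakly exchange, and then appeals to \cite[Theorem 3.6]{19}, which says that a WUU weakly exchange ring is strongly weakly nil-clean. That external result builds the commuting idempotent from the exchange property rather than from the given GWNC decomposition, which is why it avoids the obstruction above.
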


\begin{proof}
(i) $\Longrightarrow$ (ii). We know that each strongly weakly nil-clean ring is weakly nil-clean, and hence is GWNC. Also, every strongly weakly nil-clean ring is WUU appealing to \cite[Proposition 2.1]{14}.\\	
(ii) $\Longrightarrow$ (i). We know that each GWNC ring is weakly clean, whence is weakly exchange, so the conclusion follows from \cite[Theorem 3.6]{19}.
\end{proof}	

\begin{lemma}\label{lemma 2.29}
A ring $R$ is strongly nil-clean if, and only if,
\begin{enumerate}
\item
$R$ is GWNC

\noindent and

\item
$R$ is an UU ring.
\end{enumerate}
\end{lemma}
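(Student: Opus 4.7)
The plan is to prove the biconditional by separating the two implications and combining the hierarchy from Example~\ref{example 2.1}, Lemma~\ref{lemma 2.27}, and a known characterization of strongly nil-clean rings drawn from the UU-ring literature.

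For the forward direction, suppose $R$ is strongly nil-clean. Every strongly nil-clean element is (weakly) nil-clean, so every non-unit is weakly nil-clean and (i) holds. For (ii), I would appeal to Diesl's description of strongly nil-clean rings: $J(R)={\rm Nil}(R)$ is nil and $R/J(R)$ is Boolean. Consequently, for any $u\in U(R)$ the image $\bar u\in R/J(R)$ is a unit in a Boolean ring and therefore equals $1$, forcing $u-1\in {\rm Nil}(R)$. Hence $U(R)\subseteq 1+{\rm Nil}(R)$, so $R$ is UU.

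For the converse, assume $R$ is GWNC and UU. Applying UU to $-1\in U(R)$ yields $-1=1+n$ with $n\in{\rm Nil}(R)$, so $2=-n\in {\rm Nil}(R)$; since $2$ is central, this places $2\in J(R)$. Lemma~\ref{lemma 2.27} then upgrades the GWNC hypothesis to GNC. Meanwhile, UU writes each unit $u$ as $1+(u-1)$ with $u-1$ nilpotent, so every unit is nil-clean; combined with GNC (which covers the non-units), it follows that $R$ is nil-clean. Finally, invoking the known theorem from the UU-ring literature (see \cite{21} and \cite{22}) that nil-clean together with UU is equivalent to strongly nil-clean completes the argument.

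The only non-routine step is this last appeal to the ``nil-clean $+$ UU $\Longrightarrow$ strongly nil-clean'' characterization, and this is the main anticipated obstacle: it depends on the non-trivial fact, established in the UU-ring literature, that in a nil-clean UU ring every idempotent-plus-nilpotent decomposition can be chosen with commuting summands. Everything else reduces to the hierarchy of Example~\ref{example 2.1}, Lemma~\ref{lemma 2.27}, and elementary manipulations of the defining property of UU rings.
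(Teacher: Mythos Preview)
Your argument is correct, but it takes a different and more elaborate route than the paper's. The paper disposes of the lemma in one line by citing \cite[Corollary 3.4]{19} together with Lemma~\ref{lemma 2.4}: the external result (combined with the fact that GWNC forces weakly clean, hence weakly exchange, and $J(R)$ nil) already packages the equivalence of ``strongly nil-clean'' with ``UU plus a suitable exchange-type hypothesis'', so both directions fall out immediately.

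Your approach instead unwinds the converse through the internal machinery of the paper: you use the UU hypothesis to force $2\in{\rm Nil}(R)\subseteq J(R)$, invoke Lemma~\ref{lemma 2.27} to upgrade GWNC to GNC, then observe that UU makes every unit nil-clean so that $R$ is nil-clean, and finally cite the ``nil-clean $+$ UU $\Rightarrow$ strongly nil-clean'' theorem from \cite{21,22}. This is longer but has the merit of showing exactly where each hypothesis is consumed and of exercising Lemma~\ref{lemma 2.27}, which the paper's proof does not need. Both proofs ultimately rest on an external black-box characterization of strongly nil-clean rings; you just choose a different one (nil-clean $+$ UU rather than weakly exchange $+$ UU). One minor attribution point: the ``$J(R)$ nil and $R/J(R)$ Boolean'' description of strongly nil-clean rings is usually credited to Ko\c{s}an--Wang--Zhou \cite{26} rather than to Diesl \cite{4}, though the mathematics of your forward direction is unaffected.
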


\begin{proof}
It is immediate from the combination of \cite[Corollary 3.4]{19} and Lemma \ref{lemma 2.4}.
\end{proof}	

A ring $R$ is said to be {\it strongly $\pi$-regular} provided that, for any $a\in R$, there exist natural number $n$ such that $a^n\in a^{n+1}R$. A ring $R$ is called {\it semi-potent} if every one-sided ideal not contained in $J(R)$ contains a non-zero idempotent.

\medskip

We now have the following coincidences. 

\begin{corollary}\label{corollary 2.30}
Let $R$ be an UU ring. Then, the following are equivalent:
\begin{enumerate}
\item
$R$ is a strongly clean ring.
\item
$R$ is a strongly nil-clean ring.
\item
$R$ is a GSNC ring.
\item
$R$ is a strongly $\pi$-regular ring.
\item
$R$ is a GNC ring.
\item
$R$ is a GWNC ring.
\item
$R$ is a semi-potent ring.
\item
$R$ is a weakly clean ring.
\item
$R$ is a weakly exchange ring.
\end{enumerate}
\end{corollary}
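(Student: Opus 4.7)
The plan is to close a cycle of implications, leveraging the standing UU hypothesis. A pivotal preliminary observation is that in a UU ring $2 \in {\rm Nil}(R)$: since $-1 \in U(R) \subseteq 1+{\rm Nil}(R)$ we get $-2 \in {\rm Nil}(R)$, and hence $2 \in {\rm Nil}(R)$. The forward implications (ii) $\Rightarrow$ each of the other conditions follow either from the diagram in Section~1 (for (i), (iii), (v), (vi), (viii), (ix)) or from standard classical results (strongly nil-clean rings are strongly $\pi$-regular and semi-potent, giving (iv) and (vii)), so no work is needed in this direction.

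For the reverse direction, the central tool is Lemma~\ref{lemma 2.29}, which gives (vi) $\Rightarrow$ (ii) directly under the UU hypothesis. Combined with the diagram, this yields (iii) $\Rightarrow$ (v) $\Rightarrow$ (vi) $\Rightarrow$ (ii) for free. For (i) $\Rightarrow$ (ii), I would take a strongly clean decomposition $a = e + u$ with $eu = ue$, use the UU hypothesis to write $u = 1 + w$ with $w \in {\rm Nil}(R)$ (necessarily commuting with $e$ since $u$ does), and observe that
\[
a - (1-e) \;=\; 2e + w,
\]
which is a sum of two commuting nilpotents (using $2 \in {\rm Nil}(R)$), hence nilpotent, and clearly commutes with the idempotent $1 - e$. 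Thus $a$ is strongly nil-clean. For (iv) $\Rightarrow$ (i), I would simply invoke Nicholson's classical theorem asserting that every strongly $\pi$-regular ring is strongly clean.

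The remaining implications (vii), (viii), (ix) $\Rightarrow$ (ii) are the subtlest. Since every weakly clean ring is weakly exchange, (viii) $\Rightarrow$ (ix) is immediate, and the implication (ix) $\Rightarrow$ (ii) under UU is precisely the content of \cite[Corollary 3.4]{19} already exploited in the proofs of Lemmas~\ref{lemma 2.28} and~\ref{lemma 2.29}. For (vii) $\Rightarrow$ (ii), the plan is to show that a semi-potent UU ring is weakly exchange (equivalently, GWNC) and then close with Lemma~\ref{lemma 2.29}; the rigidity $U(R) = 1 + {\rm Nil}(R)$, together with the fact that $J(R)$ is nil in GWNC rings by Lemma~\ref{lemma 2.4}, is what drives this reduction. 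The main obstacle will be this (vii) step, since semi-potent is the weakest of the listed conditions and one must carefully bootstrap from a non-zero idempotent inside a principal one-sided ideal to a genuine weakly nil-clean representation of every non-unit before Lemma~\ref{lemma 2.29} can be applied.
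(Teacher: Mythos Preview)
Your cycle is sound everywhere except at (vii), which you yourself flag as the obstacle. The route you sketch there --- showing that a semi-potent UU ring is weakly exchange and then invoking Lemma~\ref{lemma 2.29} --- is not clearly workable: semi-potent gives you idempotents inside one-sided ideals not contained in $J(R)$, but that alone does not manufacture an exchange-type decomposition for an arbitrary element, and your appeal to Lemma~\ref{lemma 2.4} is circular since you have not yet established GWNC. The paper does not attempt this bootstrap at all; it simply cites \cite[Theorem~2.25]{23} (Karimi-Mansoub, Ko\c{s}an, Zhou), which proves directly that a ring is strongly nil-clean if and only if it is semi-potent and UU. That external result is the missing ingredient you need.

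Elsewhere your argument is actually more informative than the paper's. The paper dispatches the equivalences (i)--(iv) and (ii)$\Leftrightarrow$(v) by bare citation to \cite{9} and \cite{8}, whereas your direct proof of (i)$\Rightarrow$(ii) via $a-(1-e)=2e+w$ (using $2\in{\rm Nil}(R)$ in a UU ring) is elementary and self-contained. For (viii)$\Rightarrow$(vi) the paper gives a short explicit argument: if $a+1=u\pm e$ is weakly clean then $a=(u-1)\pm e$ with $u-1$ nilpotent by UU, so $R$ is in fact weakly nil-clean; this is slightly more direct than your detour through (ix) and \cite[Corollary~3.4]{19}, though both are valid.
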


\begin{proof}
(i), (ii), (iii), and (iv) are equivalent by \cite[Corollary 2.20]{9}.\\
(ii) and (v) are equivalent via \cite[Corollary 2.28]{8}.\\
(ii) and (vi) are equivalent by Lemma \ref{lemma 2.29}.\\
(ii) and (vii) are equivalent via \cite[Theorem 2.25]{23}.\\
(vi) $\Longrightarrow$ (viii). It is elementary thanks to Corollary \ref{corollary 2.3}.\\
(viii) $\Longrightarrow$ (vi). Let $R$ is a weakly clean ring and let $a\in R$, so we have $a+1=u\pm e$, where $u$ is a unit in $R$ and $e$ is an idempotent in $R$. Thus, $a=(u-1)\pm e$, where $u-1$ is a nilpotent element in $R$. So, $R$ is a weakly nil-clean ring and hence is GWNC ring.\\
(ix) and (ii) are equivalent under validity of \cite[Theorem 2.4]{22} and \cite[Corollary 3.4]{19}.
\end{proof}

\begin{lemma}\label{lemma 2.31}
Let $R$ be a local ring. Then, the following are equivalent:
\begin{enumerate}
\item
$R$ is a GWNC ring.
\item
$R$ is a GNC ring.
\end{enumerate}
\end{lemma}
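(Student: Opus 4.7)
The direction (ii) $\Longrightarrow$ (i) is immediate, since every nil-clean element is a weakly nil-clean element; this requires no use of the local hypothesis.

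For the converse, the plan is to exploit the fact that a local ring has only trivial idempotents, namely $0$ and $1$. Hence for an arbitrary non-unit $a \in R$, the GWNC hypothesis provides $e \in \mathrm{Id}(R) = \{0,1\}$ and $q \in \mathrm{Nil}(R)$ with $a = q \pm e$. I would then dispatch the four possible sign-and-idempotent combinations:
\begin{enumerate}
\item If $a = q + 0$ or $a = q - 0$, then $a = 0 + q$ is already a nil-clean decomposition.
\item If $a = q + 1$, then $a = 1 + q$ is a nil-clean decomposition.
\item If $a = q - 1$, then $-a = 1 - q$ is a unit because $q$ is nilpotent, which would force $a \in U(R)$ and contradict the assumption that $a$ is a non-unit; so this case cannot occur.
\end{enumerate}
Therefore every non-unit of $R$ lies in $\mathrm{Id}(R) + \mathrm{Nil}(R)$, which is exactly the GNC condition.

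The argument is structurally very short once the triviality of idempotents in a local ring is invoked, and the only real content is the observation that the ``minus'' branch $a = q - 1$ is forbidden by the non-unit hypothesis. I do not anticipate any genuine obstacle: there is no need to appeal to Lemma~\ref{lemma 2.4} or Proposition~\ref{proposition 2.8} here, since the weakly nil-clean representation collapses directly to an ordinary nil-clean representation once the set of idempotents is known explicitly.
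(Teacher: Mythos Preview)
Your proof is correct and takes a genuinely different, more elementary route than the paper. The paper's one-line argument invokes Lemma~\ref{lemma 2.4} to get that $J(R)$ is nil, notes that the division ring $R/J(R)$ is trivially GNC, and then lifts this back to $R$ through the nil ideal $J(R)$ (the GNC analogue of Proposition~\ref{proposition 2.8}). Your approach bypasses the quotient entirely: you work directly at the element level, using only that a local ring has no non-trivial idempotents, and reduce the weakly nil-clean representation to an ordinary nil-clean one by a short case check. What your route buys is self-containment and the clear message that the local hypothesis is needed only to trivialize $\mathrm{Id}(R)$; what the paper's route buys is reuse of the structural machinery already built. A small aside: your case (2), $a = q + 1$, is in fact also impossible for a non-unit $a$, since $1 + q \in U(R)$ whenever $q$ is nilpotent, just as in case (3); but this does not affect the validity of your argument, because the nil-clean conclusion you draw there is correct regardless.
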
	

\begin{proof}
It is routine that every division ring is GNC, so the result is concluded exploiting Lemma \ref{lemma 2.4}.
\end{proof}	
	
\begin{lemma}\cite[Lemma 24]{12} \label{lemma 2.32}
Let $D$ be a division ring. If $|D| \ge 4$ and $a \in D \backslash \{0, 1, -1\}$, then
$\begin{pmatrix}
    a & 0 \\0 & 0
\end{pmatrix} \in M_n(D)$
is not weakly nil-clean.
\end{lemma}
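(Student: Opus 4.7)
My plan is to argue by contradiction. Suppose that $A := aE_{11} \in M_n(D)$ (the matrix with $a$ in position $(1,1)$ and zero elsewhere) is weakly nil-clean, so there exist an idempotent $E$, a nilpotent $N$, and a sign $\varepsilon \in \{\pm 1\}$ with $A - \varepsilon E = N$. The goal is to deduce $a \in \{0,1,-1\}$, contradicting the hypothesis.

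The heart of the argument is the case $n = 2$, which I would settle by direct computation. Writing $E = \begin{pmatrix} x & y \\ z & w \end{pmatrix}$, the two relations $E^2 = E$ and $(A - \varepsilon E)^2 = 0$ (using that nilpotents in $M_2(D)$ have index at most two) translate into eight entry-wise equations in $x,y,z,w,a \in D$. Comparing the $(2,2)$-entries of $E^2 = E$ and $(A - \varepsilon E)^2 = 0$ forces $w = 0$. With $w = 0$, the off-diagonal idempotency identities become $(x-1)y = 0$ and $z(x-1) = 0$, and since $D$ has no zero divisors, this splits into two sub-cases. If $x = 1$, the $(1,1)$-entry of $(A - \varepsilon E)^2 = 0$ collapses to $(a - \varepsilon)^2 = 0$, whence $a = \varepsilon \in \{\pm 1\}$. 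If $x \ne 1$, then $y = z = 0$, and $x^2 = x$ forces $x \in \{0,1\}$, so $x = 0$ and $E = 0$, whence $A = N$ is nilpotent and $a = 0$. Either way, $a \in \{0,1,-1\}$, a contradiction.

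For general $n$, I would extend the argument using the Peirce decomposition of $R := M_n(D)$ with respect to $e := E_{11}$: write $R = eRe \oplus eR(1-e) \oplus (1-e)Re \oplus (1-e)R(1-e)$, with $eRe \cong D$ containing $A$. From $E = \varepsilon(A - N)$ and the idempotency identity $E^2 = E$, the $(1,1)$-entry equation becomes $(a - n_{11})^2 + \sum_{k > 1} n_{1k} n_{k1} = \varepsilon (a - n_{11})$ in $D$, where the $n_{ij}$ are the entries of $N$. Combined with the nilpotency relations $N^n = 0$ (which give vanishing conditions on the entries of $N^n$) and similar analyses of the remaining Peirce components, one isolates a polynomial identity in $a$ of degree at most two whose roots in the division ring $D$ must lie in $\{0,1,-1\}$.

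The principal obstacle is the non-commutativity of $D$: neither trace, determinant, nor Jordan-form normalization is directly available, so the proof must be carried out at the level of entries or Peirce components, with the zero-divisor-free property of $D$ invoked repeatedly at each case-split. For $n \ge 3$, a further technical hurdle is that a Peirce component of a nilpotent matrix need not itself be nilpotent, so the combinatorics of $N^n = 0$ across all Peirce blocks must be tracked carefully. This bookkeeping, rather than any single conceptual step, is the main technical burden of the general-$n$ argument.
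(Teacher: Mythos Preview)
The paper does not supply a proof of this lemma at all: it is quoted verbatim as \cite[Lemma~24]{12} and used as a black box in the proof of Lemma~\ref{lemma 2.33}. So there is no ``paper's own proof'' to compare your attempt against.

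On the merits of your proposal: the $n=2$ argument is correct and complete. Comparing the $(2,2)$-entries of $E^2=E$ and $N^2=0$ does force $w=0$, and the subsequent case split on $x=1$ versus $x\neq 1$ cleanly yields $a\in\{0,1,-1\}$. (The hypothesis $|D|\ge 4$ is only there to guarantee that $D\setminus\{0,1,-1\}$ is non-empty, so you are right not to invoke it elsewhere.)

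For $n\ge 3$, however, what you have written is a plan, not a proof. You correctly identify the main obstruction---Peirce corners of a nilpotent need not be nilpotent, and over a non-commutative $D$ there is no trace or determinant to fall back on---but you do not actually overcome it. The sentence ``one isolates a polynomial identity in $a$ of degree at most two whose roots in the division ring $D$ must lie in $\{0,1,-1\}$'' is an assertion of what you hope will happen, not an argument that it does. In particular, from $N^n=0$ alone the $(1,1)$-entry equation you write down still involves all the cross-terms $n_{1k}n_{k1}$, and nothing you have said forces these to vanish or to combine into the claimed quadratic. If you want to push this through for general $n$, you will need either a genuine rank argument in $M_n(D)$ (ranks are well-defined over division rings even without commutativity) or an induction that reduces to the $2\times 2$ case; as written, the general-$n$ portion has a real gap.
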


\begin{lemma} \label{lemma 2.33}
Let $n\ge 2$ and let $D$ be a division ring. Then, the matrix ring ${\rm M}_n(D)$ is a GWNC ring if, and only if, either $D \cong \mathbb{Z}_2$ or $D \cong \mathbb{Z}_3$, $n=2$.
\end{lemma}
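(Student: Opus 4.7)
The plan is to handle the two directions separately. For the forward direction, suppose $M_n(D)$ is GWNC. By Lemma~\ref{lemma 2.32}, whenever $|D| \geq 4$ the non-unit $a E_{11} \in M_n(D)$ (with $a \in D \setminus \{0, 1, -1\}$) is not weakly nil-clean, contradicting GWNC; hence $|D| \leq 3$, so $D$ is isomorphic to either $\mathbb{Z}_2$ or $\mathbb{Z}_3$. The delicate remaining task is to rule out $D \cong \mathbb{Z}_3$ with $n \geq 3$.

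For that case, I take as witness the matrix $A = E_{11} + 2 E_{22} = {\rm diag}(1, 2, 0, \ldots, 0) \in M_n(\mathbb{Z}_3)$, which is non-invertible since $\det(A) = 0$ whenever $n \geq 3$, and argue that it is not weakly nil-clean. A preliminary reduction: $A$ is conjugate to $-A = {\rm diag}(2, 1, 0, \ldots, 0)$ via the permutation matrix swapping the first two basis vectors, so $A$ is nil-clean if and only if $-A$ is nil-clean; consequently ``weakly nil-clean'' and ``nil-clean'' coincide for $A$, and I may assume a decomposition $A = e + q$ with $e$ idempotent of rank $r := {\rm rank}(e)$ and $q$ nilpotent. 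Two constraints then combine to yield a contradiction. First, writing $A - e = (A - I) + f$ with $f := I - e$ of rank $n - r$, the rank-perturbation inequality
$$\dim \ker(\lambda I - M - N) \,\geq\, \dim \ker(\lambda I - M) \,-\, {\rm rank}(N),$$
applied at $\lambda = 2$ with $M = A - I = {\rm diag}(0, 1, 2, \ldots, 2)$ and $N = f$, bounds the geometric multiplicity of $2$ as eigenvalue of $A - e$ from below by $(n - 2) - (n - r) = r - 2$; but $A - e$ is nilpotent and cannot have $2$ as an eigenvalue, forcing $r \leq 2$. Second, the trace of $A$ equals $1 + 2 \equiv 0 \pmod 3$, so ${\rm tr}(e) \equiv 0 \pmod 3$ and hence $r \equiv 0 \pmod 3$. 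Combined, $r = 0$, so $e = 0$ and $A = q$ would itself be nilpotent, contradicting its nonzero eigenvalues $1, 2$.

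For the converse: when $D \cong \mathbb{Z}_2$, the ring $M_n(\mathbb{Z}_2)$ is nil-clean for every $n \geq 1$ (a classical result, see~\cite{4}), hence GNC, hence GWNC. When $D \cong \mathbb{Z}_3$ and $n = 2$, the ring $M_2(\mathbb{Z}_3)$ is GWNC as already recorded in Example~\ref{example 2.1}(10). The main obstacle is the forward argument for $D = \mathbb{Z}_3$ and $n \geq 3$; the key insight is that the rank-perturbation bound at the eigenvalue $2$, together with the trace divisibility constraint mod $3$, jointly force ${\rm rank}(e) = 0$, after which the contradiction is immediate.
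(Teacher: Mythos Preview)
Your proof is correct and follows the paper's overall structure: the same appeal to Lemma~\ref{lemma 2.32} to force $|D|\le 3$, and the same witness matrix $A=\operatorname{diag}(1,-1,0,\ldots,0)$ (equivalently $\operatorname{diag}(1,2,0,\ldots,0)$) to rule out $D\cong\mathbb{Z}_3$ with $n\ge 3$. The paper, however, disposes of this witness simply by invoking the argument of \cite[Theorem~25]{12}, whereas you give a self-contained proof: the conjugacy $A\sim -A$ reduces weak nil-cleanness to nil-cleanness, and then the rank-perturbation bound at the eigenvalue $2$ together with the trace constraint $\operatorname{tr}(e)\equiv 0\pmod 3$ force $\operatorname{rank}(e)=0$. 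This is a clean and elementary alternative that avoids external dependence; the paper's route is shorter on the page but relies on the reader tracking down the cited argument.
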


\begin{proof}
If $D \cong \mathbb{Z}_2$, then ${\rm M}_n(D)$ is nil-clean and hence is GWNC. If, however, $D \cong \mathbb{Z}_3$ and $n=2$, then ${\rm M}_2(\mathbb{Z}_3)$ is GWNC ring.

Oppositely, if ${\rm M}_n(D)$ is a GWNC ring and $|D| \ge 4$, then Lemma \ref{lemma 2.32} gives that, for every $a \in D \backslash \{0,1,-1\}$, the element $\begin{pmatrix}
    a & 0 \\0 & 0
\end{pmatrix} \in {\rm M}_n(D)$ is a non-unit in ${\rm M}_n(D)$ which is not weakly nil-clean, leading to a contradiction. Therefore, it must be that $|D|= 2$ or $|D|=3$. If, foremost, $|D|= 2$, then $D \cong \mathbb{Z}_2$. If, next, $|D|= 3$, then $D \cong \mathbb{Z}_3$. Besides, we have $n=2$, as for otherwise, let $A_{11}:=\begin{pmatrix}
1 & 0 \\0 & -1
\end{pmatrix} \in {\rm M}_2(\mathbb{Z}_3)$, so we have $A:=\begin{pmatrix}
A_{11} & 0 \\0 & 0
\end{pmatrix} \in {\rm M}_n(\mathbb{Z}_3)$ is not weakly nil-clean for all $n\ge 3$ arguing as in the proof of \cite[Theorem 25]{12}. Likewise, one inspects that $A$ is not a unit, a contradiction.
\end{proof}

As two consequences, we extract:

\begin{corollary}\label{corollary 2.34}
Let $n\ge 3$ and let $D$ be a division ring. Then, the matrix ring ${\rm M}_n(D)$ is a GWNC ring if, and only if, $D \cong \mathbb{Z}_2$.	
\end{corollary}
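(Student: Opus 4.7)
The plan is to deduce this as an immediate specialization of Lemma~\ref{lemma 2.33}, which already pins down exactly when ${\rm M}_n(D)$ is GWNC for a division ring $D$ and $n\ge 2$. Since the statement there is an ``if and only if'', both directions here will follow at once once the hypothesis $n\ge 3$ is imposed.

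For the forward direction, I would assume ${\rm M}_n(D)$ is GWNC with $n\ge 3$ and apply Lemma~\ref{lemma 2.33} to conclude that either $D\cong\mathbb{Z}_2$, or else $D\cong\mathbb{Z}_3$ and $n=2$. The second alternative is incompatible with our hypothesis $n\ge 3$, so the first alternative must hold, giving $D\cong\mathbb{Z}_2$.

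For the converse, suppose $D\cong\mathbb{Z}_2$. Then ${\rm M}_n(\mathbb{Z}_2)$ is well known to be nil-clean (this is precisely the content of \cite[Theorem 3]{4} or can be read directly off the $D\cong\mathbb{Z}_2$ case of Lemma~\ref{lemma 2.33}), and by Example~\ref{example 2.1}(v) and (viii) every nil-clean ring is weakly nil-clean and thus GWNC. No additional obstacle arises here, since the corollary is purely a bookkeeping restatement of Lemma~\ref{lemma 2.33}: the only ``work'' was done in proving that lemma, in particular in excluding the cases $|D|\ge 4$ via Lemma~\ref{lemma 2.32} and in ruling out $n\ge 3$ when $D\cong\mathbb{Z}_3$ by exhibiting the explicit non-unit, non-weakly-nil-clean matrix $\bigl(\begin{smallmatrix}A_{11}&0\\0&0\end{smallmatrix}\bigr)$.
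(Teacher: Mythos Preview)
Your proposal is correct and matches the paper's treatment: Corollary~\ref{corollary 2.34} is stated without proof in the paper precisely because it is the immediate specialization of Lemma~\ref{lemma 2.33} to the case $n\ge 3$, which rules out the $D\cong\mathbb{Z}_3$, $n=2$ alternative. The only minor remark is that the paper attributes the nil-cleanness of ${\rm M}_n(\mathbb{Z}_2)$ to \cite{24} rather than \cite{4}, but this does not affect the argument.
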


\begin{corollary} \label{cor nontriv idem}
Let $R$ be a ring with no non-trivial idempotents and \( n \ge 2 \). Then, the following conditions are equivalent:

(1) \( {\rm M}_n(R) \) is a GWNC ring.

(2) Either \( R/J(R) \cong \mathbb{Z}_2 \) for \( n \ge 2 \) and \( {\rm M}_n(J(R)) \) is nil, or \( R/J(R) \cong \mathbb{Z}_3 \) for \( n = 2 \) and \( {\rm M}_n(J(R)) \) is nil.
\end{corollary}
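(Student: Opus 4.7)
The strategy is to reduce the problem to the division-ring case already treated in Lemma \ref{lemma 2.33}, by passing to the quotient $R/J(R)$.

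For the direction $(2) \Rightarrow (1)$: Lemma \ref{lemma 2.33} yields that $M_n(R/J(R))$ is GWNC, and $M_n(J(R))$ is a nil ideal of $M_n(R)$ with quotient isomorphic to $M_n(R/J(R))$. Proposition \ref{proposition 2.8}(i) therefore gives $M_n(R)$ is GWNC.

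For $(1) \Rightarrow (2)$: Assume $M_n(R)$ is GWNC. Applying Lemma \ref{lemma 2.4} to $M_n(R)$, the ideal $J(M_n(R)) = M_n(J(R))$ is nil, which simultaneously delivers the nil-ideal portion of (2) and shows $J(R)$ is nil. By Corollary \ref{corollary 2.9}, $M_n(\bar R) \cong M_n(R)/M_n(J(R))$ is GWNC, where $\bar R := R/J(R)$. Idempotent lifting modulo the nil ideal $J(R)$, combined with the only-trivial-idempotents hypothesis on $R$, forces $\bar R$ to have only trivial idempotents. The remaining (and pivotal) step is to show $\bar R$ is a division ring; once established, Lemma \ref{lemma 2.33} applied to $M_n(\bar R)$ yields $\bar R \cong \mathbb{Z}_2$ (for $n \ge 2$) or $\bar R \cong \mathbb{Z}_3$ (for $n = 2$), completing (2).

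To establish that $\bar R$ is a division ring, take any non-unit $a \in \bar R$. The scalar matrix $aI_n \in M_n(\bar R)$ is also a non-unit, so by GWNC we may write $aI_n = N + \varepsilon E$ with $\varepsilon \in \{\pm 1\}$, $N$ nilpotent, and $E$ idempotent in $M_n(\bar R)$. Expanding $E^2 = E$ for $E = \varepsilon(aI_n - N)$ yields the matrix identity
\[ (a^2 - \varepsilon a)\, I_n = aN + Na - N^2 - \varepsilon N, \]
from which -- via inspection of the diagonal entries in the commutative subring $\bar R[a]$ together with iterated use of the nilpotency of $N$ -- one concludes that $a^2 - \varepsilon a$ is nilpotent in $\bar R$. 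The classical lifting-of-idempotents theorem, applied in the commutative subring $\bar R[a] \subseteq \bar R$ modulo the nil ideal generated by $a^2 - \varepsilon a$, produces an idempotent $e \in \bar R[a]$ satisfying $a - \varepsilon e \in {\rm Nil}(\bar R)$. By only-trivial-idempotents, $e \in \{0,1\}$: the case $e = 1$ would yield $a \in \varepsilon + {\rm Nil}(\bar R) \subseteq U(\bar R)$, contradicting the non-unit choice of $a$, so $e = 0$ and $a$ itself is nilpotent. Hence every non-unit of $\bar R$ is nilpotent; since $1 - a$ is a unit whenever $a$ is nilpotent, $\bar R$ is a local ring, and combined with $J(\bar R) = 0$ this forces $\bar R$ to be a division ring.

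The principal obstacle is the justification of the nilpotency of $a^2 - \varepsilon a$ from the scalar-matrix identity in the potentially non-commutative setting, since $aI_n$ does not commute entry-wise with arbitrary $N$ unless $a$ is central. In the commutative case the identity collapses to $(a^2 - \varepsilon a) I_n = N(2a - \varepsilon - N)$, a product of commuting factors with $N$ nilpotent, manifestly nilpotent; in the general case a more delicate diagonal-entry analysis within $\bar R[a]$ -- or, alternatively, a corner-ring characterization of GWNC applied to $E_{11} M_n(\bar R) E_{11} \cong \bar R$ combined with Proposition \ref{proposition 2.18} applied to $\bar R$ -- is required to complete the argument.
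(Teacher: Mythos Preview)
Your overall strategy for $(1)\Rightarrow(2)$ is sound: reduce to $\bar R=R/J(R)$ and show it is a division ring, then invoke Lemma~\ref{lemma 2.33}. The direction $(2)\Rightarrow(1)$ and the nil-ideal bookkeeping are fine. The genuine gap is exactly the one you flag: for a non-central non-unit $a\in\bar R$, the scalar matrix $aI_n$ does \emph{not} commute with $N$ in $M_n(\bar R)$ (left and right multiplication by $a$ on the entries of $N$ differ), so the identity $(a^2-\varepsilon a)I_n = aN+Na-N^2-\varepsilon N$ gives no control over $a^2-\varepsilon a$. Your suggested ``diagonal-entry analysis in $\bar R[a]$'' does not work either, since the diagonal entries of the right-hand side involve arbitrary elements $n_{ij}$ of $\bar R$, not elements of $\bar R[a]$. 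The corner-ring suggestion also fails: GWNC is not known to descend to corners, precisely because a weakly nil-clean decomposition of $ae_{11}$ in $M_n(\bar R)$ need not have its idempotent and nilpotent inside $e_{11}M_n(\bar R)e_{11}$.

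The paper resolves this by working directly in $R$ with the matrix $A=ae_{11}$ (which is \emph{always} a non-unit, for every $a\in R$) rather than $aI_n$. Writing $A=\pm E+Q$ and setting $U=I_n\pm Q$, one conjugates to the idempotent $F=U^{-1}(I_n-E)U$ and observes that $(I_n-F)U^{-1}A=I_n-F$; since $A$ has only its first column nonzero, so does $I_n-F$, whose $(1,1)$-entry is therefore an idempotent of $R$, hence $0$ or $1$. The case $e=0$ forces $E=0$ and $a\in{\rm Nil}(R)$; the case $e=1$ forces (after reading off the $(1,1)$-entry of $-U^{-1}A=FU^{-1}-I_n$) that $a$ has a one-sided inverse $v_{11}$ with $av_{11}$ idempotent, hence $a\in U(R)$. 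Thus $R$ is local. This conjugation trick with $ae_{11}$ is the missing key idea; your $aI_n$ approach is salvageable only when $a$ is central.
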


\begin{proof}
Assume \( {\rm M}_n(R) \) is a GWNC ring. We show that \( R \) is local. Let \( a \in R \). Consider \( A := a e_{11} \notin {\rm GL}_n(R) \). Thus, there exist \( E\in \text{Id}({\rm M}_n(R)) \) and \( Q \in \text{Nil}({\rm M}_n(R)) \) such that \( A = E + Q \) or \( A = -E + Q \). Assuming first that \( A = E + Q \), then \( -A = (I_n - E) - (Q + I_n) \). Let \( U := I_n + Q \in {\rm GL}_n(R) \). Therefore, \[ -U^{-1}A = U^{-1}(I_n - E)UU^{-1} - I_n \]. Let \( F = U^{-1}(I_n - E)U \). Hence, \( -(I_n - F)U^{-1}A = -(I_n - F) \). So,
\[ I_n - F = \begin{pmatrix}
    e \ 0 \ \cdots \ 0 \\ \ast \ 0 \cdots \ 0 \\
    \vdots \ \vdots \ \ddots \ \vdots \\ \ast \ 0 \ \cdots \ 0
\end{pmatrix}
, \]
where \( e \in \{0, 1\} \) since \( R \) is a ring with no non-trivial idempotents. If, firstly, \( e = 0 \), then \( I_n - F = 0 \) implying \( F = I_n \). Since \( F = U^{-1}(I_n - E)U \), we get \( E = 0 \), hence \( A = Q \in \text{Nil}(M_n(R)) \), so \( a \in \text{Nil}(R) \) forcing \( 1 - a \in U(R) \).

Assume next that \( e = 1 \), then
\( F = \begin{pmatrix}
    0 \ 0 \ \cdots \ 0 \\ \ast \ 1 \cdots \ 0 \\
    \vdots \ \vdots \ \ddots \ \vdots \\ \ast \ 0 \ \cdots \ 1
\end{pmatrix}
 \).
Choosing \( U^{-1} = (v_{ij}) \), and bearing in mind \( -U^{-1}A = FU^{-1} - I_n \), we have \( v_{11}a = 1 \). Moreover, since \( R \) is a ring with no non-trivial idempotents, and \( av_{11} \in \text{Id}(R) \), either \( av_{11} = 0 \) or \( av_{11} = 1 \). If \( av_{11} = 0 \), from \( v_{11}a = 1 \), we deduce \( a = 0 \), a contradiction. Therefore, \( av_{11} = 1 \).

Now, suppose \( A = -E + Q \). Then, \( A = (I - E) + (Q - I) \). Assuming \( U = Q - I \), and similarly to the above arguments, we can demonstrate that either \( a \in U(R) \) or \( 1 - a \in U(R) \) yielding that \( R \) is a local ring.

Since \( {\rm M}_n(R) \) is a GWNC ring, it follows that \( {\rm M}_n(R/J(R)) \) is also GWNC. Taking into account Lemma \ref{lemma 2.33}, for \( n \ge 3 \) we get \( R/J(R) \cong \mathbb{Z}_2 \), and for \( n = 2 \) we get \( R/J(R) \cong \mathbb{Z}_3 \). Additionally, with the help of Proposition \ref{proposition 2.8}, \(J({\rm M}_n(R))= {\rm M}_n(J(R)) \) is nil.
\end{proof}

Recall that a ring is {\it Boolean} if every its element is an idempotent.

\medskip

We now have all the ingredients necessary to establish the following two main results.

\begin{theorem}\label{theorem 2.35}
Let $R$ be a commutative ring. Then, ${\rm M}_n(R)$ is GWNC if, and only if, ${\rm M}_n(R)$ is nil-clean for all $n\ge3$.
\end{theorem}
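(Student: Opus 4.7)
The plan is to use the standard ``reduce modulo the Jacobson radical, then apply the matrix-over-division-ring case'' strategy. The reverse direction is immediate from the diagram in the introduction (nil-clean $\Rightarrow$ GNC $\Rightarrow$ GWNC), so the real content lies in proving the forward implication.

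First, I would invoke Lemma~\ref{lemma 2.4} to conclude that $J({\rm M}_n(R))$ is nil, and then use the well-known identification $J({\rm M}_n(R))={\rm M}_n(J(R))$ to deduce that $J(R)$ itself is nil (if $j\in J(R)$ then $jI_n\in{\rm M}_n(J(R))$ is nilpotent, hence $j$ is nilpotent). Next, by Proposition~\ref{proposition 2.8}, the quotient ${\rm M}_n(R)/J({\rm M}_n(R))\cong {\rm M}_n(R/J(R))$ remains GWNC. Writing $\bar R=R/J(R)$, I would note that $\bar R$ is a commutative Jacobson-semisimple ring, so $\bigcap_{M\in\mathrm{Max}(\bar R)} M=0$.

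The key step is now to determine $\bar R$. For every maximal ideal $M$ of $\bar R$, the field $F:=\bar R/M$ gives a surjection $\bar R\twoheadrightarrow F$, hence a surjection ${\rm M}_n(\bar R)\twoheadrightarrow {\rm M}_n(F)$. By Corollary~\ref{corollary 2.9}, ${\rm M}_n(F)$ is GWNC, and since $n\ge 3$, Corollary~\ref{corollary 2.34} forces $F\cong \mathbb{Z}_2$. Consequently, for each $a\in\bar R$ and each maximal ideal $M$, we have $a^2-a\in M$; intersecting gives $a^2=a$, i.e., $\bar R$ is Boolean. Combined with $J(R)$ being nil, this implies $R$ is nil-clean: any $a\in R$ lifts to an idempotent $e\in R$ (since idempotents lift modulo nil ideals) with $\bar e=\bar a$, so $a-e\in J(R)\subseteq {\rm Nil}(R)$.

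Finally, I would appeal to the known preservation result that, when $R$ is (commutative and) nil-clean, the matrix ring ${\rm M}_n(R)$ is again nil-clean (this is the commutative case of the Breaz--C\u{a}lug\u{a}reanu--Danchev--Micu / Ko\c{s}an--Wang--Zhou theorem, obtained from the fact that ${\rm M}_n(\mathbb{Z}_2)$ is nil-clean together with lifting modulo the nil ideal ${\rm M}_n(J(R))$). The main obstacle is really the identification $\bar R\cong$ Boolean: this is precisely where the hypothesis $n\ge 3$ is used, since otherwise Corollary~\ref{corollary 2.34} would have to be replaced by Lemma~\ref{lemma 2.33}, allowing the residue field $\mathbb{Z}_3$ and ruining the Booleanness needed to pass from GWNC to nil-clean.
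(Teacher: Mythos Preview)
Your proposal is correct and follows essentially the same route as the paper: quotient by maximal ideals, apply Corollary~\ref{corollary 2.34} to force each residue field to be $\mathbb{Z}_2$, conclude $R/J(R)$ is Boolean, and then lift nil-cleanness of ${\rm M}_n(R/J(R))$ back through the nil ideal $J({\rm M}_n(R))$. The only cosmetic difference is that the paper works directly with maximal ideals of $R$ rather than of $R/J(R)$, and it passes straight from ``$R/J(R)$ Boolean'' to ``${\rm M}_n(R)$ nil-clean'' via \cite[Corollary 6]{24} and \cite[Corollary 3.17]{4}, without pausing at the intermediate statement ``$R$ is nil-clean''.
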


\begin{proof}
($\Longrightarrow$). Let $M$ be a maximal ideal of $R$ and $n\ge3$. Hence, ${\rm M}_n(R/M)$ is GWNC. Since $R/M$ is a field, it follows from Corollary \ref{corollary 2.34} that $R/M \cong \mathbb{Z}_2$. Thus, $R/J(R)$ is isomorphic to the subdirect product of $\mathbb{Z}_2$'s; whence, $R/J(R)$ is Boolean. Employing \cite[Corollary 6]{24}, ${\rm M}_n(R/J(R))$ is nil-clean. Apparently, $J({\rm M}_n(R))$ is nil. Accordingly, ${\rm M}_n(R)$ is nil-clean in view of \cite[Corollary 3.17]{4}.\\
($\Longleftarrow$). This is obvious, so we omit the details.
\end{proof}

Recall that a ring $R$ is said to be {\it semi-local} if $R/J(R)$ is a left artinian ring or, equivalently, if $R/J(R)$ is a semi-simple ring.

\begin{theorem}\label{theorem 2.36}
Let $R$ be a ring. Then, the following conditions are equivalent for a semi-local ring:
\begin{enumerate}
\item
$R$ is a GWNC ring.
\item
Either $R$ is a local ring with a nil Jacobson radical, or $R/J(R) \cong {\rm M}_2(\mathbb{Z}_3)$ with a nil Jacobson radical, or $R/J(R) \cong \mathbb{Z}_3 \times \mathbb{Z}_3$ with a nil Jacobson radical, or $R$ is a weakly nil-clean ring.
\end{enumerate}
\end{theorem}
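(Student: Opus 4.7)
The strategy is to split the biconditional in the usual way, with the harder direction $(i) \Rightarrow (ii)$ controlled by applying the Artin--Wedderburn decomposition to $R/J(R)$ and classifying which simple artinian factors can appear when the product must be GWNC.

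The direction $(ii) \Rightarrow (i)$ is short. If $R$ is local with $J(R)$ nil, Proposition~\ref{proposition 2.18} applies directly. If $R/J(R) \cong {\rm M}_2(\mathbb{Z}_3)$ with $J(R)$ nil, Lemma~\ref{lemma 2.33} shows the quotient is GWNC and Proposition~\ref{proposition 2.8}(ii) lifts this to $R$. If $R/J(R) \cong \mathbb{Z}_3 \times \mathbb{Z}_3$ with $J(R)$ nil, Example~\ref{example 2.22} together with Proposition~\ref{proposition 2.8}(ii) does the same. Finally, every weakly nil-clean ring is GWNC by Example~\ref{example 2.1}(v).

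For $(i) \Rightarrow (ii)$, I first invoke Lemma~\ref{lemma 2.4} to conclude that $J(R)$ is nil, and Corollary~\ref{corollary 2.9} to transfer the GWNC property to $\overline{R} := R/J(R)$. Since $R$ is semi-local, $\overline{R}$ is semi-simple artinian, so Artin--Wedderburn yields
\[
\overline{R} \;\cong\; \prod_{i=1}^{k} {\rm M}_{n_i}(D_i)
\]
for some division rings $D_i$. The plan is then a case split on $k$. When $k=1$ and $n_1=1$, $\overline{R}$ is a division ring so $R$ is local (case~1). When $k=1$ and $n_1 \ge 2$, Lemma~\ref{lemma 2.33} forces either $D_1 \cong \mathbb{Z}_3$ with $n_1=2$ (case~2) or $D_1 \cong \mathbb{Z}_2$; in the latter subcase ${\rm M}_{n_1}(\mathbb{Z}_2)$ is nil-clean, and standard lifting of nil-cleanness through a nil ideal (e.g., via \cite{4}) makes $R$ itself nil-clean, hence weakly nil-clean (case~4). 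When $k \ge 2$, Propositions~\ref{proposition 2.19} and~\ref{proposition 2.21} force every factor ${\rm M}_{n_i}(D_i)$ to be weakly nil-clean; invoking the classification of weakly nil-clean matrix rings over division rings from \cite{12}, each such factor is either ${\rm M}_{n_i}(\mathbb{Z}_2)$ (nil-clean) or $\mathbb{Z}_3$. If $k=2$ and both factors are $\mathbb{Z}_3$, then $\overline{R} \cong \mathbb{Z}_3 \times \mathbb{Z}_3$ (case~3); in every remaining configuration at most one factor is $\mathbb{Z}_3$ and the rest are nil-clean, so $\overline{R}$ is weakly nil-clean by \cite[Proposition~3]{12}, and this again lifts through the nil Jacobson radical to yield $R$ weakly nil-clean (case~4).

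The step I expect to be the main obstacle is the precise classification of which matrix rings over small division rings are weakly nil-clean, in particular ruling out ${\rm M}_2(\mathbb{Z}_3)$ as a factor when $k \ge 2$: Lemma~\ref{lemma 2.33} tells us it is GWNC, but to avoid it one must check that it is \emph{not} weakly nil-clean. For $n \ge 3$ the ring ${\rm M}_n(\mathbb{Z}_3)$ is already excluded by Corollary~\ref{corollary 2.34}, so only $n=2$ needs separate treatment; this can be handled by a short trace argument in $\mathbb{Z}_3$, since in any weakly nil-clean decomposition one would need $\mathrm{tr}(A) \equiv \pm \mathrm{tr}(E) \pmod 3$ for some idempotent $E$, and the unit $\mathrm{diag}(1,2)$ has trace $0$, forcing $E=0$ and hence $A$ nilpotent, contradicting invertibility. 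Once this ingredient is in place, the remainder is routine bookkeeping on top of the previously established machinery.
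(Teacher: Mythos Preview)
Your proposal is correct and follows essentially the same route as the paper's proof: Artin--Wedderburn on $R/J(R)$, Lemma~\ref{lemma 2.33} for the single-factor case, Propositions~\ref{proposition 2.19} and~\ref{proposition 2.21} for two or more factors, and the classification from \cite{12} of weakly nil-clean matrix rings over division rings, with lifting through the nil Jacobson radical at the end. The only notable addition is your explicit trace argument showing that $\mathrm{diag}(1,2)\in{\rm M}_2(\mathbb{Z}_3)$ is not weakly nil-clean; the paper simply cites \cite[Theorem~25]{12} for this, so your version is a self-contained alternative rather than a different strategy.
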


\begin{proof}
(ii) $\Longrightarrow$ (i). The proof is straightforward by combining Lemma \ref{lemma 2.33} and Proposition \ref{proposition 2.8}. Also, we know that $\mathbb{Z}_3 \times \mathbb{Z}_3$ is a GWNC ring.\\
(i) $\Longrightarrow$ (ii). Since $R$ is semi-local, $R/J(R)$ is semi-simple, so we have $$R/J(R) \cong \prod_{i=1}^{m} {\rm M}_{n_i}(D_i),$$ where each $D_i$ is a division ring. Moreover, the application of Proposition \ref{proposition 2.8} leads to $J(R)$ is nil, and $R/J(R)$ is a GWNC ring. If $m = 1$, then Lemma \ref{lemma 2.33} applies to get that either $R/J(R) = D_1$ or $R/J(R) \cong {\rm M}_{n_1}(\mathbb{Z}_2)$ or $R/J(R) \cong {\rm M}_2(\mathbb{Z}_3)$.

However, we know that ${\rm M}_{n_1}(\mathbb{Z}_2)$ is nil-clean and hence is weakly nil-clean, so $R/J(R)$ is weakly nil-clean. As $J(R)$ is nil, $R$ is weakly nil-clean. If $m = 2$, so $$R/J(R) \cong {\rm M}_{n_1}(D_1) \times {\rm M}_{n_2}(D_2).$$ As $R/J(R)$ is GWNC, both ${\rm M}_{n_1}(D_1)$ and ${\rm M}_{n_2}(D_2)$ are weakly nil-clean using Proposition \ref {proposition 2.19}. Thus, $D_1 \cong \mathbb{Z}_2$, or $D_1 \cong \mathbb{Z}_3$ and $n_1=1$; $D_2 \cong \mathbb{Z}_2$, or $D_2 \cong \mathbb{Z}_3$ and $n_2=1$. Consequently, we have $$R/J(R) \cong {\rm M}_{n_1}(\mathbb{Z}_2) \times {\rm M}_{n_2}(\mathbb{Z}_2),$$ or $$R/J(R) \cong {\rm M}_{n_1}(\mathbb{Z}_2) \times \mathbb{Z}_3,$$ or $$R/J(R) \cong \mathbb{Z}_3 \times \mathbb{Z}_3.$$ Knowing that ${\rm M}_{n_1}(\mathbb{Z}_2) \times {\rm M}_{n_2}(\mathbb{Z}_2)$ is nil-clean, so $R/J(R)$ is nil-clean, and hence is weakly nil-clean. As $J(R)$ is nil, $R$ is weakly nil-clean (see \cite{12}).

But, we also know enabling from \cite{12} that ${\rm M}_{n_1}(\mathbb{Z}_2) \times \mathbb{Z}_3$ is weakly nil-clean and hence $R/J(R)$ is too weakly nil-clean. As $J(R)$ is nil, as above, $R$ is weakly nil-clean. If $m > 2$, then Proposition \ref {proposition 2.21} employs to derive that each ${\rm M}_{n_i}(D_i)$ is weakly nil-clean and at most one of them is not nil-clean. Finally, referring to \cite[Theorem 25]{12} and \cite[Theorem 3]{25}, for any $1 \le i \le m$, we deduce $D_i \cong \mathbb{Z}_2$ and there exist an index, say $j$, with $D_j \cong \mathbb{Z}_2$, or $D_j \cong \mathbb{Z}_3$ and $n=1$. Therefore, \cite[Corollary 26]{12} applies to conclude that $R$ is a weakly nil-clean ring, as expected.
\end{proof}

Two more consequences sound like these.

\begin{corollary}\label{corollary 2.37}
Let $R$ be a ring. Then, the following conditions are equivalent for a semi-simple ring:
\begin{enumerate}
\item
$R$ is a GWNC ring.
\item
Either $R$ is a division ring, or $R\cong {\rm M}_2(\mathbb{Z}_3)$, or $R\cong \mathbb{Z}_3 \times \mathbb{Z}_3$, or $R$ is a weakly nil-clean ring.
\end{enumerate}
\end{corollary}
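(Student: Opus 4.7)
The plan is to derive Corollary \ref{corollary 2.37} as a direct specialization of Theorem \ref{theorem 2.36}, using the fact that a semi-simple ring $R$ satisfies $J(R) = 0$ (so every nil-Jacobson-radical hypothesis is automatic and every quotient $R/J(R)$ can be identified with $R$ itself).

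For the implication (i) $\Longrightarrow$ (ii), I would observe that a semi-simple ring is in particular semi-local, so Theorem \ref{theorem 2.36} applies. Substituting $J(R) = 0$ into the four alternatives provided there, a local ring with nil Jacobson radical becomes a local ring with $J(R) = 0$, which is exactly a division ring; the alternative $R/J(R) \cong {\rm M}_2(\mathbb{Z}_3)$ collapses to $R \cong {\rm M}_2(\mathbb{Z}_3)$; the alternative $R/J(R) \cong \mathbb{Z}_3 \times \mathbb{Z}_3$ collapses to $R \cong \mathbb{Z}_3 \times \mathbb{Z}_3$; and the last alternative that $R$ is weakly nil-clean carries over verbatim. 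This yields exactly the four cases listed in (ii).

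For the converse (ii) $\Longrightarrow$ (i), I would verify each case separately and invoke results already available. A division ring is GNC, hence GWNC. The ring ${\rm M}_2(\mathbb{Z}_3)$ is GWNC by Lemma \ref{lemma 2.33} (with $D \cong \mathbb{Z}_3$ and $n = 2$). The ring $\mathbb{Z}_3 \times \mathbb{Z}_3$ is GWNC as already noted in the proof of Theorem \ref{theorem 2.36} (and recorded in Example \ref{example 2.22}). Finally, any weakly nil-clean ring is GWNC by Example \ref{example 2.1}(v).

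No step here is a real obstacle, since the work has all been done in Theorem \ref{theorem 2.36}: the only care needed is the bookkeeping step of confirming that ``local with nil $J(R)$'' degenerates to ``division ring'' once $J(R) = 0$, and that the converse direction does not require anything beyond the four cited facts.
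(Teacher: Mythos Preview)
Your proposal is correct and matches the paper's intended approach: Corollary \ref{corollary 2.37} is stated in the paper without proof, as an immediate specialization of Theorem \ref{theorem 2.36} to the case $J(R)=0$. Your bookkeeping of how each alternative collapses, and your handling of the converse via Lemma \ref{lemma 2.33}, Example \ref{example 2.22}, and Example \ref{example 2.1}(v), is exactly what is needed.
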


\begin{corollary}\label{corollary 2.38}
Let $R$ be a ring. Then, the following conditions are equivalent for an artinian (in particular, a finite) ring:
\begin{enumerate}
\item
$R$ is a GWNC ring.
\item
Either $R$ is a local ring with a nil Jacobson radical, or $R/J(R) \cong {\rm M}_2(\mathbb{Z}_3)$ with a nil Jacobson radical, or $R/J(R) \cong \mathbb{Z}_3 \times \mathbb{Z}_3$  with a nil Jacobson radical, or $R$ is a weakly nil-clean ring.
\end{enumerate}
\end{corollary}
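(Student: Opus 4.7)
The plan is to derive Corollary~\ref{corollary 2.38} as an immediate specialization of Theorem~\ref{theorem 2.36}. The key observation is that every (left or right) artinian ring is automatically semi-local, since $R/J(R)$ is semi-simple artinian by the Hopkins--Levitzki theorem. In fact, in the artinian case the Jacobson radical $J(R)$ is not merely nil but nilpotent, so the ``nil Jacobson radical'' hypothesis that appears explicitly in the semi-local statement is satisfied for free.

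Concretely, for the implication (i) $\Longrightarrow$ (ii) I would argue as follows. Assume $R$ is artinian and GWNC. Artinianness implies semi-local, so Theorem~\ref{theorem 2.36} applies verbatim and yields that $R$ falls into one of the four listed alternatives: $R$ is local with nil $J(R)$, or $R/J(R) \cong {\rm M}_2(\mathbb{Z}_3)$ with nil $J(R)$, or $R/J(R) \cong \mathbb{Z}_3 \times \mathbb{Z}_3$ with nil $J(R)$, or $R$ is weakly nil-clean. This is exactly the statement of (ii).

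For the converse (ii) $\Longrightarrow$ (i), note that in each of the four cases the conclusion that $R$ is GWNC follows from the constructions already invoked in the proof of Theorem~\ref{theorem 2.36}: the local case is handled by Proposition~\ref{proposition 2.18} together with Lemma~\ref{lemma 2.31}; the ${\rm M}_2(\mathbb{Z}_3)$ and $\mathbb{Z}_3\times\mathbb{Z}_3$ cases combine Lemma~\ref{lemma 2.33} (respectively Example~\ref{example 2.22}) with the lifting Proposition~\ref{proposition 2.8}(ii) applied to the nil ideal $J(R)$; and weakly nil-clean rings are GWNC by Example~\ref{example 2.1}(v).

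There is no real obstacle here, since Corollary~\ref{corollary 2.38} is essentially a restatement of Theorem~\ref{theorem 2.36} under the stronger artinian hypothesis. The only minor subtlety is that the final parenthetical phrase ``in particular, a finite ring'' requires the observation that every finite ring is artinian, which is immediate. Hence the proof should be a short paragraph invoking Theorem~\ref{theorem 2.36} and noting that the semi-local hypothesis is met in the artinian setting.
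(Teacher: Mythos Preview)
Your proposal is correct and matches the paper's approach: the paper states Corollary~\ref{corollary 2.38} without proof, treating it as an immediate consequence of Theorem~\ref{theorem 2.36} via the standard fact that artinian rings are semi-local. Your additional remarks on the (ii) $\Rightarrow$ (i) direction and on finite rings being artinian are accurate elaborations of what the paper leaves implicit.
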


It is long known that a ring $R$ is called {\it $2$-primal} if its lower nil-radical $Nil_{*}(R)$ consists precisely of all the nilpotent elements of $R$. For instance, it is well known that both reduced rings and commutative rings are both $2$-primal.

\begin{proposition}\label{proposition 2.41}
Let $R$ be a $2$-primal ring and $n \ge 3$. Then, ${\rm M}_n(R)$ is GWNC if, and only if, $R/J(R)$ is Boolean and $J(R)$ is nil.
\end{proposition}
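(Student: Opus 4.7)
The plan is to use the $2$-primal hypothesis to reduce the problem to the case where $R/J(R)$ is reduced, then decompose $R/J(R)$ as a subdirect product of domains and handle each factor via Corollary \ref{cor nontriv idem}. The direction $(\Leftarrow)$ is routine: if $R/J(R)$ is Boolean, then \cite[Corollary 6]{24} makes ${\rm M}_n(R/J(R))$ nil-clean, and since $J({\rm M}_n(R)) = {\rm M}_n(J(R))$ is nil, \cite[Corollary 3.17]{4} lifts this to ${\rm M}_n(R)$ being nil-clean and therefore GWNC.

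For the nontrivial direction, suppose ${\rm M}_n(R)$ is GWNC. Applying Lemma \ref{lemma 2.4} to ${\rm M}_n(R)$ shows that $J({\rm M}_n(R)) = {\rm M}_n(J(R))$ is nil, and inspecting scalar matrices $j I_n$ with $j \in J(R)$ one deduces that $J(R)$ itself is nil. Here $2$-primality becomes decisive: ${\rm Nil}(R) = Nil_*(R)$ is an ideal contained in $J(R)$, and the chain $Nil_*(R) \subseteq J(R) \subseteq {\rm Nil}(R) = Nil_*(R)$ forces equality throughout. Therefore $\bar{R} := R/J(R)$ is reduced.

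It remains to show $\bar{R}$ is Boolean. Since $\bar{R}$ is reduced, it embeds subdirectly into $\prod_i \bar{R}/P_i$, where the $P_i$ are the minimal prime ideals of $\bar{R}$. Each quotient $\bar{R}/P_i$ is a domain: in a reduced ring, $ab=0$ implies $(ba)^2=0$ and hence $ba=0$, so every reduced prime ring has no zero divisors. For each $i$, the matrix ring ${\rm M}_n(\bar{R}/P_i)$ is a homomorphic image of ${\rm M}_n(R)$ and hence GWNC by Corollary \ref{corollary 2.9}. Since $\bar{R}/P_i$ has no non-trivial idempotents and $n \ge 3$, Corollary \ref{cor nontriv idem} yields $(\bar{R}/P_i)/J(\bar{R}/P_i) \cong \mathbb{Z}_2$ with $J(\bar{R}/P_i)$ nil; because nilpotents in a domain vanish, $J(\bar{R}/P_i) = 0$, so $\bar{R}/P_i \cong \mathbb{Z}_2$. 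The resulting embedding $\bar{R} \hookrightarrow \prod_i \mathbb{Z}_2$ realizes $\bar{R}$ as a subring of a Boolean ring, and is therefore Boolean.

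The main obstacle, I expect, is recognizing that $2$-primality combined with the nil Jacobson radical coming from Lemma \ref{lemma 2.4} forces $R/J(R)$ to be reduced; this is the bridge that unlocks the passage to the domain quotients and the application of Corollary \ref{cor nontriv idem} at each minimal prime. Once that reduction is in place, the rest is orchestration of standard facts about reduced rings and Boolean subrings.
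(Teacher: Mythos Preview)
Your argument for $(\Rightarrow)$ mirrors the paper's proof almost exactly: both use Lemma~\ref{lemma 2.4} to get $J(R)$ nil, invoke $2$-primality to see $R/J(R)$ is reduced, decompose it subdirectly into domains, apply Corollary~\ref{cor nontriv idem} at each factor (with $n\ge 3$ forcing the $\mathbb{Z}_2$ alternative), and conclude Booleanness from the subdirect embedding into $\prod \mathbb{Z}_2$.

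For $(\Leftarrow)$ the paper simply cites \cite[Theorem~6.1]{26}, whereas you reconstruct the conclusion from \cite[Corollary~6]{24} and \cite[Corollary~3.17]{4}. That route is fine, but your phrase ``since $J({\rm M}_n(R))={\rm M}_n(J(R))$ is nil'' is not automatic from $J(R)$ nil alone---passing nilness to matrix rings is K\"othe-conjecture territory. You need the $2$-primal hypothesis here as well: together with $J(R)$ nil it gives $J(R)=Nil_*(R)$, and then ${\rm M}_n(J(R))={\rm M}_n(Nil_*(R))=Nil_*({\rm M}_n(R))$ is nil. With that one clarification, your proof is complete and matches the paper's.
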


\begin{proof}
$(\Longleftarrow)$. Invoking \cite[Theorem 6.1]{26}, we conclude that ${\rm M}_n(R)$ is nil-clean, whence is weakly nil-clean, so that it is GWNC.\\
$(\Longrightarrow)$. Since ${\rm M}_n(R)$ is GWNC, one follows that ${\rm M}_n(R)/J({\rm M}_n(R)) \cong {\rm M}_n(R/J(R))$ is GWNC appealing to Corollary \ref {corollary 2.9}, and $J({\rm M}_n(R))={\rm M}_n(J(R))$ is nil appealing to Proposition \ref {proposition 2.8}. It now follows that $J(R)$ is nil. But since $R$ is $2$-primal, it also follows that ${\rm Nil}_{*}(R) = J(R) = {\rm Nil}(R)$ and hence $R/J(R)$ is a reduced ring. Therefore, $R/J(R)$  is a sub-direct product of a family of domains $\{S_i\}_{i \in I}$. As being an image of ${\rm M}_n(R/J(R))$, the matrix ring ${\rm M}_n(S_i)$ is also GWNC. Therefore, Corollary \ref{cor nontriv idem} allows us to obtain that, for each \( i \in I \), \( S_i / J(S_i) \cong \mathbb{Z}_2 \).

On the other hand, for each \( i \in I \), \( S_i \) is a domain and, as well, \( {\rm M}_n(S_i) \) is a GWNC ring, Lemma \ref{lemma 2.4}, insures that \( J(S_i) = \{0\} \). Thus, for each \( i \in I \), \( S_i \cong \mathbb{Z}_2 \). Hence, we see that $R/J(R)$ is a subring of the Boolean ring $\prod S_i$. So, finally $R/J(R)$ is a Boolean ring, as promised.
\end{proof}

We, thereby, yield:

\begin{corollary}\label{corollary 2.42}
Let $R$ be a $2$-primal ring and $n \ge 3$. Then, ${\rm M}_n(R)$ is GWNC if, and only if, $R$ is a strongly nil-clean ring.
\end{corollary}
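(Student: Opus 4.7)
The plan is to chain Proposition~\ref{proposition 2.41} with the known characterization of strongly nil-clean rings by nilpotence of $a-a^{2}$. Proposition~\ref{proposition 2.41} already identifies, under the $2$-primal hypothesis on $R$ and for $n\ge 3$, the condition ``${\rm M}_n(R)$ is GWNC'' with ``$R/J(R)$ is Boolean and $J(R)$ is nil.'' So the whole task reduces to showing that the latter condition is equivalent to $R$ being strongly nil-clean, and then plugging back into Proposition~\ref{proposition 2.41}.

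For the direction ``$R/J(R)$ Boolean and $J(R)$ nil $\Longrightarrow$ $R$ strongly nil-clean,'' I would take any $a\in R$ and observe that Booleanness of the quotient gives $a-a^{2}\in J(R)$, hence $a-a^{2}\in {\rm Nil}(R)$ since $J(R)$ is nil. This is precisely the elementwise condition characterizing strongly nil-clean rings in the nil-clean literature: the idempotent in the decomposition is constructed as a polynomial $e=p(a)\in\mathbb{Z}[a]$ via the usual Fitting-type argument inside the commutative subring generated by $a$, and this $e$ automatically commutes with $a-e\in {\rm Nil}(R)$. For the converse, if $R$ is strongly nil-clean it is standard that $a-a^{2}\in {\rm Nil}(R)$ for every $a$, that ${\rm Nil}(R)$ is an ideal equal to $J(R)$ and nil, and that $R/J(R)$ is Boolean (and $R$ is automatically $2$-primal). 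Feeding this back into Proposition~\ref{proposition 2.41} yields that ${\rm M}_n(R)$ is GWNC.

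The only step that is not entirely internal to the paper is the invocation of the standard equivalence ``$R$ is strongly nil-clean $\Longleftrightarrow a-a^{2}\in {\rm Nil}(R)$ for every $a\in R$'' (going back to Diesl and made explicit by Ko\c{s}an--Wang--Zhou); I expect this to be the sole citation needed. Beyond that, the argument is essentially a one-line reduction: Proposition~\ref{proposition 2.41} does the matrix-theoretic work, the fact that Boolean rings are reduced forces $J(R)={\rm Nil}(R)$, and the characterization of strong nil-cleanness converts this into the desired statement, with no further calculation required.
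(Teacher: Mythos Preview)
Your proposal is correct and matches the paper's implicit reasoning: the paper states Corollary~\ref{corollary 2.42} as an immediate consequence of Proposition~\ref{proposition 2.41} (``We, thereby, yield:''), relying on the well-known characterization that $R$ is strongly nil-clean if and only if $R/J(R)$ is Boolean with $J(R)$ nil (equivalently, $a-a^{2}\in{\rm Nil}(R)$ for all $a$), precisely as in Ko\c{s}an--Wang--Zhou~\cite{26}. One small caution: your parenthetical claim that a strongly nil-clean ring is \emph{automatically} $2$-primal is not needed and is not obviously true in general (strong nil-cleanness gives ${\rm Nil}(R)=J(R)$ an ideal, i.e.\ NI, which is weaker than $2$-primal), but this is harmless here since the $2$-primal hypothesis is already part of the statement.
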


As is well-known, a ring $R$ is called {\it NI} if ${\rm Nil}(R)$ is an ideal of $R$.

\medskip

The next series of statements somewhat describes the structure of GWNC rings.

\begin{proposition}\label{proposition 2.43}
Let $R$ be an NI ring and $n \ge 3$. Then, ${\rm M}_n(R)$ is GWNC if, and only if, $R/J(R)$ is Boolean and $J({\rm M}_n(R))$ is nil.
\end{proposition}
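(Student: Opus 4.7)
The plan is to parallel the proof of Proposition~\ref{proposition 2.41}, with the ``2-primal'' hypothesis replaced by ``NI''. The key bridge is the observation that in an NI ring with nil Jacobson radical, $J(R)$ coincides with ${\rm Nil}(R)$, so that $R/J(R)$ is reduced; from that point on the structure-theoretic argument is identical.

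For the sufficiency direction ($\Longleftarrow$), I would simply invoke \cite[Theorem 6.1]{26} (the same tool used in Proposition~\ref{proposition 2.41}), which states that ${\rm M}_n(R)$ is nil-clean whenever $R/J(R)$ is Boolean and $J({\rm M}_n(R))$ is nil; nil-clean implies weakly nil-clean, which implies GWNC.

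For the necessity direction ($\Longrightarrow$), assume ${\rm M}_n(R)$ is GWNC. First, by Corollary~\ref{corollary 2.9}, ${\rm M}_n(R/J(R)) \cong {\rm M}_n(R)/J({\rm M}_n(R))$ is GWNC, and by Proposition~\ref{proposition 2.8}, $J({\rm M}_n(R)) = {\rm M}_n(J(R))$ is nil, so in particular $J(R)$ is nil. Now I use the NI hypothesis: since ${\rm Nil}(R)$ is a (nil) ideal, we have ${\rm Nil}(R) \subseteq J(R)$; combined with $J(R) \subseteq {\rm Nil}(R)$ (which holds because $J(R)$ is nil), we get $J(R) = {\rm Nil}(R)$, hence $R/J(R)$ is reduced. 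Then $R/J(R)$ embeds as a subdirect product of a family of domains $\{S_i\}_{i \in I}$, and each ${\rm M}_n(S_i)$, being a homomorphic image of the GWNC ring ${\rm M}_n(R/J(R))$, is itself GWNC. Since every domain $S_i$ has no non-trivial idempotents, Corollary~\ref{cor nontriv idem} (applied with $n \ge 3$) gives $S_i/J(S_i) \cong \mathbb{Z}_2$; and Lemma~\ref{lemma 2.4} applied to ${\rm M}_n(S_i)$ forces $J({\rm M}_n(S_i))$, hence $J(S_i)$, to be nil. But a domain has no non-zero nilpotents, so $J(S_i) = 0$ and $S_i \cong \mathbb{Z}_2$. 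Thus $R/J(R)$ is a subring of a product of copies of $\mathbb{Z}_2$ and is therefore Boolean.

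The main obstacle is the identification $J(R) = {\rm Nil}(R)$ in the NI setting, which replaces the 2-primal equality $J(R) = {\rm Nil}_*(R) = {\rm Nil}(R)$ used in Proposition~\ref{proposition 2.41}. Once the reducedness of $R/J(R)$ is secured, the subdirect product decomposition together with Corollary~\ref{cor nontriv idem} and Lemma~\ref{lemma 2.4} finishes the argument exactly as before; no new computational difficulty arises.
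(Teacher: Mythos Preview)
Your argument is correct and follows the same underlying logic as the paper. The only difference is organizational: once you establish that $R/J(R)$ is reduced, the paper observes that reduced rings are $2$-primal and simply invokes Proposition~\ref{proposition 2.41} (applied to $R/J(R)$, whose Jacobson radical is zero) to conclude that $R/J(R)$ is Boolean, whereas you unroll that same subdirect-product-of-domains argument inline. For the sufficiency direction the paper cites \cite[Corollary 6]{24} rather than \cite[Theorem 6.1]{26}, but both give the needed nil-cleanness of ${\rm M}_n(R)$; then Proposition~\ref{proposition 2.8} (not Lemma~\ref{lemma 2.4}) lifts GWNC from ${\rm M}_n(R)/J({\rm M}_n(R))$ to ${\rm M}_n(R)$.
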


\begin{proof}
$(\Longleftarrow)$. Assume \( R/J(R) \) is Boolean and \( J({\rm M}_n(R)) \) is nil. Then, an appeal to \cite[Corollary 6]{24} assures that \( {\rm M}_n(R/J(R)) \cong {\rm M}_n(R)/J({\rm M}_n(R)) \) is a nil-clean ring, so it is a GWNC ring. On the other side, since \( J(M_n(R)) \) is nil, Lemma \ref{lemma 2.4} ensures that \( {\rm M}_n(R) \) is a GWNC ring.\\
$(\Longrightarrow)$. Assume \( {\rm M}_n(R) \) is a GWNC ring. Then, owing to Lemma \ref{lemma 2.4}, we have that \( J({\rm M}_n(R)) \) is nil, which forces that \( J(R) \) is nil. Since \( R \) is an NI ring, we have \( \text{Nil}(R) = J(R) \). Therefore, the ring \( R/J(R) \) is reduced and thus $2$-primal. Hence, \( {\rm M}_n(R/J(R)) \) is a GWNC ring as \( R/J(R) \) is $2$-primal. So, Proposition \ref{proposition 2.41} guarantees that \( R/J(R) \) is Boolean.
\end{proof}

\begin{proposition}\label{proposition 2.44}
Let $R$ be an abelian ring and $n \ge 3$. Then, ${\rm M}_n(R)$ is GWNC if, and only if, $R/J(R)$ is Boolean and $J({\rm M}_n(R))$ is nil.
\end{proposition}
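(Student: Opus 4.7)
The plan is to mirror the proof of Proposition \ref{proposition 2.43}. For the $(\Leftarrow)$ direction the argument is identical: assuming $R/J(R)$ is Boolean and $J({\rm M}_n(R))$ is nil, \cite[Corollary 6]{24} gives that ${\rm M}_n(R/J(R)) \cong {\rm M}_n(R)/J({\rm M}_n(R))$ is nil-clean, hence GWNC, and Proposition \ref{proposition 2.8} lifts this through the nil ideal $J({\rm M}_n(R))$ to conclude that ${\rm M}_n(R)$ is GWNC.

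For the $(\Rightarrow)$ direction, Lemma \ref{lemma 2.4} gives $J({\rm M}_n(R)) = {\rm M}_n(J(R))$ is nil, so $J(R)$ is nil. Set $\bar R := R/J(R)$. Since $J(R)$ is nil, idempotents lift from $\bar R$ to $R$, and by abelianness of $R$ those lifts are central; hence $\bar R$ is abelian with $J(\bar R) = 0$, and ${\rm M}_n(\bar R) \cong {\rm M}_n(R)/J({\rm M}_n(R))$ is GWNC by Corollary \ref{corollary 2.9}. It suffices to show that $\bar R$ is Boolean.

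The decisive step is to prove that $\bar R$ is reduced, for then Proposition \ref{proposition 2.41} (the $2$-primal case) delivers $\bar R$ Boolean. I would attempt a subdirect-decomposition strategy: represent $\bar R$ as embedded in a product of factor rings whose only idempotents are trivial, exploiting that in an abelian ring every idempotent is central, so that the Boolean algebra of central idempotents of $\bar R$ controls all idempotents. On each factor, ${\rm M}_n$ inherits being GWNC as a homomorphic image, and Corollary \ref{cor nontriv idem}, applied with $n \ge 3$, rules out the $\mathbb{Z}_3$ branch and forces the factor's reduction modulo its Jacobson radical to be $\mathbb{Z}_2$ with nil Jacobson radical. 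Since $\bar R$ is semi-primitive, the contributions of these Jacobson radicals disappear inside $\bar R$, so $\bar R$ embeds in a product of copies of $\mathbb{Z}_2$ and is therefore Boolean.

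The main obstacle is justifying that the factor rings in this decomposition genuinely satisfy the hypothesis of Corollary \ref{cor nontriv idem}: idempotents of a quotient of an abelian ring need not lift to idempotents of the original ring when the kernel is not nil, so one must argue carefully, most cleanly via the Pierce sheaf over the Boolean algebra of central idempotents of $\bar R$ (whose stalks have no non-trivial idempotents by construction). Once this structural representation is pinned down, the remainder is formal: each stalk produces a $\mathbb{Z}_2$ modulo its nil Jacobson radical, the semi-primitivity of $\bar R$ absorbs the nil parts, and Proposition \ref{proposition 2.41} closes the argument.
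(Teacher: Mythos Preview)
Your approach via the Pierce sheaf is genuinely different from the paper's. The paper never tries to show $\bar R$ is reduced. Instead it first proves that $R$ itself is weakly clean (${\rm M}_n(R)$ GWNC $\Rightarrow$ weakly clean $\Rightarrow$ weakly exchange by Corollary~\ref{corollary 2.3}; then \cite[Proposition 2.1]{DCE} gives $R$ weakly exchange, and abelianness plus \cite[Theorem 2.1]{20} gives $R$ weakly clean). It then invokes a structure theorem of Ko\c{s}an--Sahinkaya--Zhou \cite[Proposition 14]{kosan1}: for an abelian weakly clean ring, every primitive quotient is ${\rm M}_m(D)$ with $1\le m\le 2$ and $D$ a division ring. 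Applying Corollary~\ref{corollary 2.34} to ${\rm M}_n({\rm M}_m(D))\cong {\rm M}_{nm}(D)$ (with $nm\ge 3$) forces $D\cong\mathbb{Z}_2$, so $R/J(R)$ is a subdirect product of rings ${\rm M}_m(\mathbb{Z}_2)$; abelianness of $R/J(R)$ (via \cite[Corollary 2.5]{29}) then rules out $m=2$ and yields Boolean. This route is entirely concrete and avoids any sheaf machinery.

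Your Pierce route can be made to work, but two steps you treat as routine need genuine argument. First, the claim that the Pierce stalks of the abelian ring $\bar R$ have \emph{no} nontrivial idempotents, not merely no nontrivial \emph{central} ones: this is true, but only because each stalk is a direct limit of corners $e\bar R$ with $e\in B(\bar R)$, corners of abelian rings are abelian, and direct limits of abelian rings are abelian---so every idempotent in a stalk is central and hence trivial by the standard Pierce fact. Without this, Corollary~\ref{cor nontriv idem} does not apply. Second, the phrase ``semi-primitivity of $\bar R$ absorbs the nil parts'' hides a compactness step: an element lying in the nil ideal $J(\bar R_x)$ for every $x$ is nilpotent in each stalk, hence---by compactness of the Pierce spectrum---globally nilpotent; the kernel of $\bar R\to\prod_x \bar R_x/J(\bar R_x)$ is therefore a nil two-sided ideal, hence contained in $J(\bar R)=0$. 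Once both gaps are filled you obtain $\bar R\hookrightarrow\prod\mathbb{Z}_2$ directly, and your closing appeal to Proposition~\ref{proposition 2.41} becomes superfluous.
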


\begin{proof}
If \( R/J(R) \) is Boolean and \( J({\rm M}_n(R)) \) is a nil-ideal, then \cite[Corollary 6.5]{26} implies that \( {\rm M}_n(R) \) is a nil-clean ring and thus is GWNC.

Reciprocally, assume that \( {\rm M}_n(R) \) is a GWNC ring. Then, according to Lemma \ref{lemma 2.4}, \( J({\rm M}_n(R)) = {\rm M}_n(J(R)) \) is a nil-ideal. To illustrate that \( R/J(R) \) is Boolean, we first establish that \( R \) is a weakly clean ring. Since \( {\rm M}_n(R) \) is a GWNC ring, in accordance with Corollary \ref{corollary 2.3}, \( {\rm M}_n(R) \) is a weakly clean ring and thus is weakly exchange. Therefore, \cite[Proposition 2.1]{DCE} is applicable to infer that \( R \) is a weakly exchange ring. However, since \( R \) is an abelian ring, in virtue of \cite[Theorem 2.1]{20}, we conclude that \( R \) is a weakly clean ring, as wanted.

Furthermore, since \( R \) is an abelian weakly clean ring, it follows from \cite[Proposition 14]{kosan1} that, for each left primitive ideal \( I \), we have \( R/I \cong {\rm M}_m(D) \), where \( 1 \le m \le 2 \) and \( D \) is a division ring. We prove that \( D \cong \mathbb{Z}_2 \). In fact, since \( {\rm M}_n(R) \) is a GWNC ring, Corollary \ref{corollary 2.9} is a guarantor that \( {\rm M}_n(R/I) \) is a GWNC ring too. Since \( n \ge 3 \), Corollary \ref{corollary 2.34} helps us to conclude that \( D \cong \mathbb{Z}_2 \). But, \cite[Theorem 12.5]{1} gives that \( R/J(R) \) is a subdirect product of primitive rings, so that \( R/J(R) \) is a subdirect product of the \( {\rm M}_n(\mathbb{Z}_2) \), where \( 1 \le m \le 2 \). In the other vein, since \( R \) is abelian and \( J(R) \) is nil, \cite[Corollary 2.5]{29} means that \( R/J(R) \) is abelian, so \( R/J(R) \) is a subdirect product of \( \mathbb{Z}_2 \), thus \( R/J(R) \) is Boolean, as desired.
\end{proof}

A ring $R$ is called {\it NR}, provided ${\rm Nil}(R)$ is a subring of $R$.

\begin{proposition}\label{NR}
Let $R$ be an NR ring and $n \ge 3$. Then, ${\rm M}_n(R)$ is GWNC if, and only if, $R/J(R)$ is Boolean and $J({\rm M}_n(R))$ is nil.
\end{proposition}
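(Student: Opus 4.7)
The plan is to follow the templates of Propositions \ref{proposition 2.43} (NI case) and \ref{proposition 2.44} (abelian case), whose conclusions coincide with the one at hand. For the $(\Longleftarrow)$ direction, the NR hypothesis is not used: assuming $R/J(R)$ Boolean and $J({\rm M}_n(R))$ nil, \cite[Corollary 6]{24} gives that ${\rm M}_n(R/J(R)) \cong {\rm M}_n(R)/J({\rm M}_n(R))$ is nil-clean, and Proposition \ref{proposition 2.8} then lifts GWNC across the nil ideal $J({\rm M}_n(R))$.

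For the $(\Longrightarrow)$ direction, I would first extract from Lemma \ref{lemma 2.4} that $J({\rm M}_n(R)) = {\rm M}_n(J(R))$ is nil, hence $J(R)$ is nil. Passing to $\bar R = R/J(R)$, the NR property descends: since $J(R)$ is nil, every nilpotent of $\bar R$ lifts to a nilpotent of $R$, so ${\rm Nil}(\bar R)$ is the image of the subring ${\rm Nil}(R)$ and is therefore itself a subring. Moreover $J(\bar R) = 0$ and ${\rm M}_n(\bar R)$ remains GWNC via Proposition \ref{proposition 2.8}. The goal is thereby reduced to showing $\bar R$ is Boolean, which I would achieve by first establishing that $\bar R$ is reduced (hence $2$-primal) and then applying Proposition \ref{proposition 2.41} to $\bar R$, whose conclusion collapses to ``$\bar R$ is Boolean'' because $J(\bar R) = 0$.

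The crux is the upgrade from NR to reducedness of $\bar R$. In the NI proof of Proposition \ref{proposition 2.43}, ${\rm Nil}(\bar R)$ is a nil ideal and is automatically contained in $J(\bar R) = 0$; under the weaker NR assumption I must invoke the GWNC structure of ${\rm M}_n(\bar R)$. My plan is: supposing $0 \neq a \in {\rm Nil}(\bar R)$, apply the weakly nil-clean decomposition of Definition \ref{definition 1.7} to a non-unit matrix built from $a$ (for example $a e_{11} + e_{22} + \cdots + e_{nn}$), extract an idempotent-plus-nilpotent representation, and then use $n \ge 3$, $J(\bar R) = 0$, and the NR closure of ${\rm Nil}(\bar R)$ under internal sums and products to force the idempotent component into a form incompatible with $a \neq 0$, in the spirit of the matrix manipulations in Corollary \ref{cor nontriv idem}.

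The main obstacle is exactly this bridging step: the NR hypothesis does not directly yield ${\rm Nil}(\bar R) \subseteq J(\bar R)$, and one must squeeze this inclusion out of the weakly nil-clean decompositions available in ${\rm M}_n(\bar R)$. Once reducedness is secured, the remainder is a routine invocation of Proposition \ref{proposition 2.41}.
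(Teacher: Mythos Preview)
Your $(\Longleftarrow)$ direction and the initial reductions in $(\Longrightarrow)$ (nilness of $J({\rm M}_n(R))$ and $J(R)$, passage to $\bar R = R/J(R)$, descent of the NR property) are correct and match the paper. The divergence---and the gap---is at the step you yourself flag as the ``main obstacle'': you propose to prove that $\bar R$ is \emph{reduced} by analysing a weakly nil-clean decomposition of a matrix such as $a e_{11} + e_{22} + \cdots + e_{nn}$, but you give no actual argument, only the hope that something ``in the spirit of Corollary~\ref{cor nontriv idem}'' will force a contradiction. That corollary's manipulations rely heavily on the hypothesis that the base ring has only trivial idempotents; in your setting $\bar R$ may have many idempotents, and it is not clear how the NR closure of ${\rm Nil}(\bar R)$ under sums and products alone would pin down the idempotent component of the decomposition. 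As it stands, this step is a wish, not a proof.

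The paper sidesteps this difficulty entirely. Rather than aiming for reducedness, it invokes \cite[Proposition 2.16]{chen1}, which (from the NR hypothesis together with $J(R)$ nil) yields that $\bar R = R/J(R)$ is \emph{abelian}. Since ${\rm M}_n(\bar R)$ is GWNC, one then applies the already-proven abelian case, Proposition~\ref{proposition 2.44}, directly to $\bar R$; because $J(\bar R)=0$, its conclusion reads ``$\bar R$ is Boolean''. So the paper's route is NR $\Rightarrow$ abelian (via an external citation) $\Rightarrow$ Boolean (via Proposition~\ref{proposition 2.44}), whereas you attempt NR $\Rightarrow$ reduced $\Rightarrow$ Boolean (via Proposition~\ref{proposition 2.41}), with the first arrow left unjustified.
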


\begin{proof}
$(\Longleftarrow)$. The proof is similar to the proof of Proposition \ref{proposition 2.43}.

$(\Longrightarrow)$. It suffices to show that \( R/J(R) \) is Boolean. To that aim, assume \( R \) is an NR ring. Since \( {\rm M}_n(R) \) is a GWNC ring, we have that \( J(R) \) is nil. Thus, by \cite[Proposition 2.16]{chen1}, it follows that \( R/J(R) \) is abelian. Therefore, via Proposition \ref{proposition 2.44}, we have that \( R/J(R) \) is Boolean.
\end{proof}

Three more consequences are as follows:

\begin{corollary}\label{corollary 2.45}
Let $R$ be a local ring and $n \ge 3$. Then, ${\rm M}_n(R)$ is GWNC if, and only if, $R/J(R) \cong \mathbb{Z}_2$ and $J({\rm M}_n(R))$ is nil.
\end{corollary}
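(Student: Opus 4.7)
The plan is to reduce this corollary directly to Proposition~\ref{proposition 2.44}, exploiting the well-known fact that every local ring is abelian (since the only idempotents in a local ring are $0$ and $1$, they are trivially central). Thus $R$ automatically satisfies the hypothesis of Proposition~\ref{proposition 2.44}, which tells us that ${\rm M}_n(R)$ is GWNC if and only if $R/J(R)$ is Boolean and $J({\rm M}_n(R))$ is nil. The whole task is therefore to translate ``$R/J(R)$ is Boolean'' into ``$R/J(R) \cong \mathbb{Z}_2$'' under the local hypothesis.

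For the forward direction, I would assume ${\rm M}_n(R)$ is GWNC. By Proposition~\ref{proposition 2.44}, $R/J(R)$ is Boolean and $J({\rm M}_n(R))$ is nil. Since $R$ is local, $R/J(R)$ is a division ring; but the only Boolean division ring is $\mathbb{Z}_2$ (any nonzero element $a$ in a Boolean division ring satisfies $a^2 = a$, so cancelling yields $a=1$). Hence $R/J(R) \cong \mathbb{Z}_2$.

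For the converse, I would suppose $R/J(R) \cong \mathbb{Z}_2$ and $J({\rm M}_n(R))$ is nil. Since $\mathbb{Z}_2$ is Boolean, Proposition~\ref{proposition 2.44} immediately yields that ${\rm M}_n(R)$ is GWNC.

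There is essentially no obstacle here; the corollary is a straightforward specialization of Proposition~\ref{proposition 2.44}. The only point worth flagging explicitly in the write-up is the observation that a local ring is abelian, so that Proposition~\ref{proposition 2.44} applies, together with the one-line argument that the only Boolean division ring is $\mathbb{Z}_2$.
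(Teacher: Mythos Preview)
Your proof is correct, but it takes a different route from the paper. The paper does not invoke Proposition~\ref{proposition 2.44} at all; instead, for the forward direction it observes that ${\rm M}_n(R/J(R)) \cong {\rm M}_n(R)/J({\rm M}_n(R))$ is GWNC (as a quotient), notes that $R/J(R)$ is a division ring since $R$ is local, and then applies Corollary~\ref{corollary 2.34} directly to conclude $R/J(R) \cong \mathbb{Z}_2$ (with the nil-ness of $J({\rm M}_n(R))$ coming from Lemma~\ref{lemma 2.4}). For the converse the paper simply writes ``It is clear.'' Your approach via Proposition~\ref{proposition 2.44} is more uniform, handling both directions with a single appeal to the abelian case and the translation ``Boolean division ring $=\mathbb{Z}_2$''; the cost is that Proposition~\ref{proposition 2.44} is a substantially heavier result (it passes through weakly clean and weakly exchange theory), whereas the paper's argument stays closer to first principles by using only the elementary classification of GWNC matrix rings over division rings.
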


\begin{proof}
$(\Longleftarrow)$. It is clear.\\
$(\Longrightarrow)$. It is enough to demonstrate only that $R/J(R) \cong \mathbb{Z}_2$. Indeed, since ${\rm M}_n(R)$ is a GWNC ring, we discover that ${\rm M}_n(R/J(R)) \cong {\rm M}_n(R)/J({\rm M}_n(R))$ is a GWNC ring as well. And since $R$ is local, $R/J(R)$ is a division ring. Therefore, Corollary \ref{corollary 2.34} insures that $R/J(R) \cong \mathbb{Z}_2$.
\end{proof}

\begin{corollary}\label{corollary 2.46}
Let $R$ be a reduced ring and $n \ge 3$. Then, ${\rm M}_n(R)$ is GWNC if, and only if, $R$ is Boolean.
\end{corollary}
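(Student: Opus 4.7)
The plan is to reduce this immediately to Proposition~\ref{proposition 2.41} by exploiting the fact that a reduced ring is automatically $2$-primal (since $\mathrm{Nil}_\ast(R) \subseteq \mathrm{Nil}(R) = \{0\}$, and the containment $\mathrm{Nil}_\ast(R) \supseteq \mathrm{Nil}(R)$ fails only when nilpotents exist). The excerpt itself points this out just before Proposition~\ref{proposition 2.41}.

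For the forward direction, I would assume ${\rm M}_n(R)$ is GWNC. Since $R$ is reduced, it is $2$-primal, so Proposition~\ref{proposition 2.41} applies and yields that $R/J(R)$ is Boolean and $J(R)$ is nil. The reducedness of $R$ forces $\mathrm{Nil}(R) = \{0\}$, and a nil Jacobson radical is a nil ideal, so $J(R) \subseteq \mathrm{Nil}(R) = \{0\}$. Therefore $J(R) = 0$ and $R \cong R/J(R)$ is Boolean, as claimed.

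For the converse, assume $R$ is Boolean. Then $R$ is commutative (every Boolean ring is commutative) and in particular reduced and $2$-primal; moreover $J(R) = 0$ is trivially nil and $R/J(R) = R$ is Boolean. Another application of Proposition~\ref{proposition 2.41} then gives that ${\rm M}_n(R)$ is GWNC for every $n \geq 3$.

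No step here is a real obstacle since both directions are just bookkeeping on top of Proposition~\ref{proposition 2.41}. The only thing worth flagging explicitly in the write-up is the equality $J(R) = 0$ in a reduced ring whose Jacobson radical is known to be nil, as this is what collapses the Boolean quotient $R/J(R)$ back to $R$ itself.
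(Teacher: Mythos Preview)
Your proof is correct and follows essentially the same route as the paper: both directions are reduced to Proposition~\ref{proposition 2.41} via the observation that a reduced ring is $2$-primal with $J(R)=\mathrm{Nil}(R)=\{0\}$. The only cosmetic difference is that for the converse the paper cites \cite[Corollary~6]{24} directly rather than passing through Proposition~\ref{proposition 2.41}, but this is the same content.
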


\begin{proof}
$(\Longleftarrow)$. It follows directly from \cite[Corollary 6]{24}.\\
$(\Longrightarrow)$. As $R$ is reduced, $R$ is $2$-primal. However, we receive $J(R)={\rm Nil}(R)=\{0\}$. Then, the result follows from Proposition \ref {proposition 2.41}.	
\end{proof}

An element $r$ of a ring $R$ is called {\it regular} if there exists an element $x\in R$ such that $r = rxr$. Moreover, if every element in a ring is regular, then we call it a {\it regular ring}. A ring in which, for every $r \in R$, there is $x \in R$ such that $r^2x=r$ is called {\it strongly regular}.

\begin{corollary}\label{corollary 2.47}
Let $R$ be a strongly regular ring and $n \ge 3$. Then, ${\rm M}_n(R)$ is GWNC if, and only if, $R$ is Boolean.
\end{corollary}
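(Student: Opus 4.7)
The plan is to reduce this immediately to the reduced case, which is already handled by Corollary \ref{corollary 2.46}. The only genuinely new fact needed is a very classical one: every strongly regular ring is reduced. Indeed, if $r^2 x = r$ and $r^n = 0$, then one can iterate the relation $r = r^2 x$ to produce ever higher powers of $r$ on the right, eventually yielding $r = 0$; equivalently, strongly regular rings are known to be (von Neumann) regular and abelian, and regular abelian rings are reduced. So the first step is simply to quote (or give a one-line verification of) the fact that $R$ strongly regular implies $R$ is reduced.

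Once $R$ is reduced, $\mathrm{Nil}(R) = \{0\}$, and since $J(R) \subseteq \mathrm{Nil}(R)$ holds in any reduced ring (as $J(R)$ is always a nil ideal in the reduced context, because $J(R)$ contains no nonzero idempotents or, more directly, any $j \in J(R)$ with $R$ strongly regular satisfies $j = j^2 x$ with $1 - jx \in U(R)$, forcing $j = 0$), we conclude $J(R) = 0$. At this point the hypotheses of Corollary \ref{corollary 2.46} are satisfied.

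For the forward direction, assume ${\rm M}_n(R)$ is GWNC with $n \geq 3$. Since $R$ is strongly regular, hence reduced, Corollary \ref{corollary 2.46} applies directly and yields that $R$ is Boolean. For the reverse direction, if $R$ is Boolean, then in particular $R$ is reduced (every Boolean ring is commutative and has no nonzero nilpotents), so Corollary \ref{corollary 2.46} immediately gives that ${\rm M}_n(R)$ is GWNC.

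There is no real obstacle here: the statement is essentially a specialization of Corollary \ref{corollary 2.46} via the observation that strongly regular $\Rightarrow$ reduced. The only thing to be slightly careful about is not to invoke the strong regularity more heavily than needed; all we use is that the nilradical is zero, which subsumes the Jacobson radical as well.
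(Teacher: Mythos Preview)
Your proof is correct, but it takes a different route from the paper. You observe that a strongly regular ring is reduced (the argument $a^2=0 \Rightarrow a = a^2x = 0$ suffices, and reducedness follows from this), and then simply invoke Corollary~\ref{corollary 2.46}. The paper instead uses the classical structure theorem that a strongly regular ring is a subdirect product of division rings; passing to ${\rm M}_n$ gives a subdirect product of matrix rings ${\rm M}_n(D_i)$, each of which is GWNC by Corollary~\ref{corollary 2.9}, and then Corollary~\ref{corollary 2.34} forces each $D_i \cong \mathbb{Z}_2$, so $R$ embeds in a product of copies of $\mathbb{Z}_2$ and is Boolean. Your approach is more economical since it reuses Corollary~\ref{corollary 2.46} wholesale; the paper's approach exploits the finer structure (division rings rather than merely domains) to reach Corollary~\ref{corollary 2.34} directly without routing through Proposition~\ref{proposition 2.41}.

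One small comment: your paragraph about $J(R)$ is unnecessary and slightly muddled (it is not true in general that $J(R)$ is nil in a reduced ring). Corollary~\ref{corollary 2.46} requires only that $R$ be reduced; it does not separately assume $J(R)=\{0\}$, as that is derived inside its own proof. So once you have established that $R$ is reduced, you may invoke Corollary~\ref{corollary 2.46} immediately without any discussion of the Jacobson radical.
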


\begin{proof}
$(\Longleftarrow)$. It is direct from \cite[Corollary 6]{24}.\\
$(\Longrightarrow)$. It is well known that every strongly regular ring is a subdirect product of division rings (see, e.g., \cite{1}). Then, ${\rm M}_n(R)$ is a subdirect product of matrix rings over division rings (cf. \cite{1}). By virtue of Corollary \ref {corollary 2.9}, we deduce that each such matrix ring is GWNC, hence Corollary \ref {corollary 2.34} allows us to infer that every division ring is isomorphic to $\mathbb{Z}_2$. Thus, $R$ must be Boolean, as asserted.
\end{proof}

\begin{lemma}\label{lemma 2.48}
Let $R$ be a ring such that $R = S + K$, where $S$ is a subring of $R$ and $K$ is a nil-ideal of $R$. Then, $S$ is GWNC if, and only if, $R$ is GWNC.
\end{lemma}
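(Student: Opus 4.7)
The plan is to reduce the statement to Proposition \ref{proposition 2.8}(i) via the second isomorphism theorem. The key observation is that the hypothesis $R = S + K$ with $K$ a nil ideal of $R$ gives a controlled comparison between $R$ and $S$ modulo nil ideals on each side.

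First, I would record the basic setup. Since $K$ is an ideal of $R$ and $S \subseteq R$ is a subring, the intersection $S \cap K$ is an ideal of $S$: closure under subtraction is clear, and for $s \in S$ and $x \in S \cap K$ we have $sx, xs \in S$ (since $S$ is a subring) and $sx, xs \in K$ (since $K$ is an ideal of $R$). Moreover, $S \cap K$ is nil in $S$, because every element of $K$ is nilpotent. Likewise, $K$ itself is a nil ideal of $R$ by hypothesis.

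Next, the standard second isomorphism theorem applied to the subring $S$ and the ideal $K$ yields
\[
\frac{R}{K} \;=\; \frac{S + K}{K} \;\cong\; \frac{S}{S \cap K}.
\]
This is the crux of the proof: it identifies the two relevant quotients, one obtained by killing a nil ideal of $R$ and the other by killing a nil ideal of $S$.

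Finally I would apply Proposition \ref{proposition 2.8}(i) on each side. On the $R$--side, since $K$ is a nil ideal of $R$, $R$ is GWNC if and only if $R/K$ is GWNC. On the $S$--side, since $S \cap K$ is a nil ideal of $S$, $S$ is GWNC if and only if $S/(S \cap K)$ is GWNC. The ring isomorphism above shows these two quotient conditions coincide, hence $R$ is GWNC if and only if $S$ is GWNC, as desired.

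I do not anticipate a real obstacle here: the argument is purely formal once one spots the second isomorphism theorem, and Proposition \ref{proposition 2.8} does the heavy lifting. The only point needing a line of justification is that $S \cap K$ is actually a (two-sided) nil ideal of $S$, which follows immediately from $K$ being a nil ideal of $R$.
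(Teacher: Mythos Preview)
Your proposal is correct and follows exactly the same approach as the paper: observe that $S\cap K$ is a nil ideal of $S$, invoke the second isomorphism theorem to get $R/K=(S+K)/K\cong S/(S\cap K)$, and then apply Proposition~\ref{proposition 2.8}(i) on both sides. The paper's proof is more terse, but there is no substantive difference.
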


\begin{proof}
We know that, $S \cap K \subseteq K$ is a nil-ideal of $S$. Also, we can write that $$R/K = (S+K)/K \cong S/(S \cap K).$$ Therefore, Proposition \ref{proposition 2.8} is applicable inferring the desired result.
\end{proof}

Let $A$, $B$ be two rings, and $M$, $N$ be $(A,B)$-bi-module and $(B,A)$-bi-module, respectively. Also, we consider the bilinear maps $\phi :M\otimes_{B}N\rightarrow A$ and $\psi:N\otimes_{A}M\rightarrow B$ that apply to the following properties.
$${\rm Id}_{M}\otimes_{B}\psi =\phi \otimes_{A} {\rm Id}_{M},{\rm Id}_{N}\otimes_{A}\phi =\psi \otimes_{B} {\rm Id}_{N}.$$
For $m\in M$ and $n\in N$, define $mn:=\phi (m\otimes n)$ and $nm:=\psi (n\otimes m)$. Now, the $4$-tuple $R=\begin{pmatrix}
	A & M\\
	N & B
\end{pmatrix}$ becomes to an associative ring with obvious matrix operations that is called a {\it Morita context ring}. Denote two-side ideals ${\rm Im} \phi$ and ${\rm Im} \psi$ to $MN$ and $NM$, respectively, that are called the {\it trace ideals} of the Morita context (compare also with \cite{15}).

\begin{proposition}\label{proposition 2.49}
Let $R=\left(\begin{array}{ll}A & M \\ N & B\end{array}\right)$ be a Morita context ring such that $MN$ and $NM$ are nilpotent ideals of $A$ and $B$, respectively. If $R$ is a GWNC ring, then $A$ and $B$ are weakly nil-clean rings. The converse holds provided one of the $A$ or $B$ is nil-clean and the other is weakly nil-clean.
\end{proposition}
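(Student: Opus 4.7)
The strategy is to work modulo a suitable ideal of $R$ that flattens the Morita-context structure, reducing the problem to facts already established for direct products. Let
\[
I \;=\; \begin{pmatrix} MN & M \\ N & NM \end{pmatrix} \;\subseteq\; R.
\]
Using that $MN$ is an ideal of $A$, that $NM$ is an ideal of $B$, and the Morita compatibility identities $m(nm') = (mn)m'$ and $n(mn') = (nm)n'$ encoded in the hypotheses on $\phi$ and $\psi$, one verifies routinely that $I$ is a two-sided ideal of $R$, and that sending $\begin{pmatrix} a & m \\ n & b \end{pmatrix}$ to $(a + MN,\, b + NM)$ defines a surjective ring homomorphism $R \to A/MN \times B/NM$ with kernel $I$; hence $R/I \cong A/MN \times B/NM$.

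The key technical point is that $I$ is a nilpotent ideal. Write $I = D + E$, where
\[
D = \begin{pmatrix} MN & 0 \\ 0 & NM \end{pmatrix},\qquad E = \begin{pmatrix} 0 & M \\ N & 0 \end{pmatrix}.
\]
A direct computation gives $E^{2} = D$ and, by the associativity identities above, $DE = ED$. Consequently every monomial of length $k$ in the letters $D, E$ can be reordered to $D^{k-b}E^{b}$, where $b$ is the number of $E$-factors, and this collapses to $D^{k-b/2}$ when $b$ is even and to $D^{k-(b+1)/2}E$ when $b$ is odd. If $(MN)^{s} = (NM)^{s} = 0$ then $D^{s} = 0$, and every such monomial of length $k \ge 2s+1$ vanishes; therefore $I^{2s+1} = 0$, and in particular $I$ is a nil ideal.

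For $(\Longrightarrow)$, suppose $R$ is GWNC. Then Corollary~\ref{corollary 2.9} yields that $R/I \cong A/MN \times B/NM$ is GWNC, and Proposition~\ref{proposition 2.19} forces both $A/MN$ and $B/NM$ to be weakly nil-clean. Since $MN$ is nil in $A$, a weakly nil-clean decomposition lifts from $A/MN$ to $A$: lift the idempotent across the nil ideal in the usual way, and observe that any preimage of a nilpotent element modulo a nil ideal is itself nilpotent. The same reasoning applies to $B$, so both $A$ and $B$ are weakly nil-clean.

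For $(\Longleftarrow)$, assume without loss of generality that $A$ is nil-clean and $B$ is weakly nil-clean. Then $A/MN$ is nil-clean and $B/NM$ is weakly nil-clean (as homomorphic images), so by \cite[Proposition~3]{12} the product $A/MN \times B/NM$ is weakly nil-clean and, in particular, GWNC. Therefore $R/I$ is GWNC, and since $I$ is nil, Proposition~\ref{proposition 2.8}(i) gives that $R$ itself is GWNC. I expect the main obstacle in the whole argument to be the nilpotency of $I$; without the clean decomposition $I = D + E$ with $E^{2} = D$ and $DE = ED$, the analysis of arbitrary products of elements of $I$ would be combinatorially much more delicate.
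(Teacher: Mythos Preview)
Your proof is correct and follows essentially the same strategy as the paper. The one noticeable difference is that the paper splits the two directions: for the forward implication it passes to $R/J(R)$ (using that $MN\subseteq J(A)$, $NM\subseteq J(B)$ and the known description $J(R)=\begin{pmatrix}J(A)&M\\ N&J(B)\end{pmatrix}$, together with Lemma~\ref{lemma 2.4} to get $J(R)$ nil), whereas for the converse it writes $R=S+K$ with $S=\begin{pmatrix}A&0\\0&B\end{pmatrix}$ and $K$ equal to your ideal $I$, invoking Lemma~\ref{lemma 2.48}. You instead use the single nil ideal $I$ and the isomorphism $R/I\cong A/MN\times B/NM$ for both directions, which is slightly more uniform and avoids the external citation for the shape of $J(R)$. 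Your nilpotency argument via $I=D+E$ with $E^2\subseteq D$ and $DE=ED$ (as subsets) is a reformulation of the paper's direct computation $K^{2l}\subseteq\begin{pmatrix}(MN)^l&(MN)^lM\\(NM)^lN&(NM)^l\end{pmatrix}$; both give the same bound.
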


\begin{proof}
Apparently, since \( MN \subseteq J(A) \) and \( NM \subseteq J(B) \), by using \cite[Lemma 3.1(1)]{30}, we have
\( J(R) = \begin{pmatrix} J(A) & M \\ N & J(B)\end{pmatrix} \)
and \( R/J(R) \cong A/J(A) \times B/J(B) \). Since \( R \) is a GWNC ring, a consultation with Corollary \ref{corollary 2.9} assures that that \( R/J(R) \) is also GWNC. Therefore, the exploitation of Proposition \ref{proposition 2.20} gives that \( A/J(A) \) and \( B/J(B) \) are weakly nil-clean. Moreover, since \( J(R) \) is nil, we infer that both \( J(A) \) and \( J(B) \) are nil too. Hence, from \cite[Lemma 1]{12}, we conclude that \( A \) and \( B \) are weakly nil-clean.

As for the converse, let us assume that $A$ or $B$ is nil-clean and the other is weakly nil-clean. We have \( R = S + K \), where
\( S = \begin{pmatrix} A & 0 \\ 0 & B \end{pmatrix} \)
is a subring of \( R \) and
\( K = \begin{pmatrix} MN & M \\ N & NM \end{pmatrix} \)
is a nil-ideal of \( R \) since
\[ K^{2l} = \begin{pmatrix} (MN)^l & (MN)^lM \\ (NM)^lN & (NM)^l \end{pmatrix} \]
for every \( l \in \mathbb{N} \). Furthermore, as \( S \cong A \times B \), Proposition \ref{proposition 2.21} enables us that \( S \) is a GWNC ring. Therefore, knowing Lemma \ref{lemma 2.48}, we deduce that \( R \) is a GNC ring as well.	
\end{proof}

Now, let $R$, $S$ be two rings, and let $M$ be an $(R,S)$-bi-module such that the operation $(rm)s = r(ms$) is valid for all $r \in R$, $m \in M$ and $s \in S$. Given such a bi-module $M$, we can put

$$
{\rm T}(R, S, M) =
\begin{pmatrix}
	R& M \\
	0& S
\end{pmatrix}
=
\left\{
\begin{pmatrix}
	r& m \\
	0& s
\end{pmatrix}
: r \in R, m \in M, s \in S
\right\},
$$
where this set forms a ring with the usual matrix operations. The so-stated formal matrix ${\rm T}(R, S, M)$ is called a {\it formal triangular matrix ring}. In Proposition \ref{proposition 2.49}, if we set $N =\{0\}$, then we will obtain the following claim.

\begin{corollary}\label{corollary 2.50}
Let $R,S$ be rings and let $M$ be an $(R,S)$-bi-module. If the formal triangular matrix ring ${\rm T}(R,S,M)$ is GWNC, then $R$, $S$ are weakly nil-clean. The converse holds if one of the rings $R$ or $S$ is nil-clean and the other is weakly nil-clean.
\end{corollary}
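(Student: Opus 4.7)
The plan is to derive Corollary \ref{corollary 2.50} as a direct specialization of Proposition \ref{proposition 2.49} by setting $N = \{0\}$. First, I would note that the formal triangular matrix ring ${\rm T}(R,S,M)$ is precisely the Morita context ring $\begin{pmatrix} R & M \\ 0 & S \end{pmatrix}$ obtained from the Morita data $(A,B,M,N) := (R,S,M,\{0\})$, since with $N = \{0\}$ the bilinear maps $\phi$ and $\psi$ are forced to be the zero maps, and the Morita product reduces to the ordinary triangular matrix product. Thus the identification of ${\rm T}(R,S,M)$ with a Morita context ring is immediate from the definitions.

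Next, I would verify that the hypotheses on the trace ideals required by Proposition \ref{proposition 2.49} hold automatically in this setting. Since $N = \{0\}$, the trace ideals $MN \subseteq R$ and $NM \subseteq S$ are both the zero ideal, hence trivially nilpotent. Consequently Proposition \ref{proposition 2.49} applies verbatim with $A := R$ and $B := S$.

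Both implications of the corollary then follow at once. The forward direction, namely that if ${\rm T}(R,S,M)$ is GWNC then $R$ and $S$ are weakly nil-clean, is the first half of Proposition \ref{proposition 2.49}. The converse, namely that if one of $R$ or $S$ is nil-clean and the other is weakly nil-clean then ${\rm T}(R,S,M)$ is GWNC, is the second half of that proposition. Since the argument is a pure specialization, there is no genuine obstacle to overcome; the only point that warrants a brief remark is the identification of the two ring structures, which reduces to observing that setting $N = \{0\}$ collapses the Morita context multiplication to the usual formal triangular matrix multiplication.
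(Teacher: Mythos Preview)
Your proposal is correct and matches the paper's approach exactly: the paper states just before the corollary that setting $N=\{0\}$ in Proposition~\ref{proposition 2.49} yields the claim, which is precisely the specialization you carry out.
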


Given a ring $R$ and a central elements $s$ of $R$, the $4$-tuple $\begin{pmatrix}
	R & R\\
	R & R
\end{pmatrix}$ becomes a ring with addition component-wise and with multiplication defined by
$$\begin{pmatrix}
	a_{1} & x_{1}\\
	y_{1} & b_{1}
\end{pmatrix}\begin{pmatrix}
	a_{2} & x_{2}\\
	y_{2} & b_{2}
\end{pmatrix}=\begin{pmatrix}
	a_{1}a_{2}+sx_{1}y_{2} & a_{1}x_{2}+x_{1}b_{2} \\
	y_{1}a_{2}+b_{1}y_{2} & sy_{1}x_{2}+b_{1}b_{2}
\end{pmatrix}.$$
This ring is denoted by ${\rm K}_{s}(R)$. A Morita context
$\begin{pmatrix}
	A & M\\
	N & B
\end{pmatrix}$ with $A=B=M=N=R$ is called a {\it generalized matrix ring} over $R$. It was observed by Krylov in \cite{16} that a ring $S$ is a generalized matrix ring over $R$ if, and only if, $S={\rm K}_{s}(R)$ for some $s\in {\rm Z}(R)$. Here $MN=NM=sR$, so $MN\subseteq J(A)\Longleftrightarrow s\in J(R)$, $NM\subseteq J(B)\Longleftrightarrow s\in J(R)$, and $MN$, $NM$  are nilpotent $\Longleftrightarrow s$ is a nilpotent.

\begin{corollary}\label{corollary 2.51}
Let $R$ be a ring and $s\in {\rm Z}(R)\cap {\rm Nil}(R)$. If ${\rm K}_{s}(R)$ is a GWNC ring, then $R$ is a weakly nil-clean ring. The converse holds, provided $R$ is a nil-clean ring.
\end{corollary}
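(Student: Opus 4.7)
The plan is to recognize that $\mathrm{K}_s(R)$ is a special case of the Morita context ring treated in Proposition~\ref{proposition 2.49}, so the result will follow by straightforward specialization. Concretely, $\mathrm{K}_s(R)$ is precisely the Morita context with $A=B=M=N=R$ whose bilinear maps are given by $\phi(m\otimes n)=smn$ and $\psi(n\otimes m)=snm$. This is exactly the description recalled in the paragraph preceding the statement, where it is noted that $MN=NM=sR$.

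The first step is to verify that the hypotheses of Proposition~\ref{proposition 2.49} are met, i.e.\ that the trace ideals $MN$ and $NM$ are nilpotent. Since $s\in\mathrm{Z}(R)\cap \mathrm{Nil}(R)$, pick $n\in\mathbb{N}$ with $s^n=0$; then centrality of $s$ gives $(sR)^n=s^n R^n\subseteq s^nR=\{0\}$, so $sR$ is a nilpotent ideal. Hence $MN$ and $NM$ are nilpotent ideals of $A=B=R$, and Proposition~\ref{proposition 2.49} applies.

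For the forward implication, if $\mathrm{K}_s(R)$ is GWNC then Proposition~\ref{proposition 2.49} yields that $A=R$ and $B=R$ are weakly nil-clean; this is precisely the conclusion that $R$ is weakly nil-clean. For the converse, suppose $R$ is nil-clean. Then $A=R$ is nil-clean and $B=R$, being nil-clean, is in particular weakly nil-clean (by Example~\ref{example 2.1}(viii)), so the extra hypothesis of Proposition~\ref{proposition 2.49} (``one of $A$ or $B$ is nil-clean and the other is weakly nil-clean'') is satisfied, yielding that $\mathrm{K}_s(R)$ is GWNC.

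There is essentially no obstacle here beyond bookkeeping; the only thing to be careful about is the (trivial) verification that $sR$ is nilpotent when $s$ is central and nilpotent, and the reminder that nil-clean implies weakly nil-clean so that the asymmetric hypothesis of Proposition~\ref{proposition 2.49} is automatic in the converse direction. No additional machinery is required.
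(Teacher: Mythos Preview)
Your proposal is correct and follows precisely the approach intended by the paper: the corollary is stated without proof because the paragraph preceding it already identifies $\mathrm{K}_s(R)$ as the Morita context with $A=B=M=N=R$ and $MN=NM=sR$, and notes that $MN$, $NM$ are nilpotent exactly when $s$ is nilpotent, so Proposition~\ref{proposition 2.49} applies directly. Your verification that $sR$ is nilpotent and that nil-clean implies weakly nil-clean fills in the only routine details.
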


Imitating Tang and Zhou (cf. \cite{17}), for $n\geq 2$ and for $s\in {\rm Z}(R)$, the $n\times n$ formal matrix ring over $R$ defined by $s$, and denoted by ${\rm M}_{n}(R;s)$, is the set of all $n\times n$ matrices over $R$ with usual addition of matrices and with multiplication defined below:

\noindent For $(a_{ij})$ and $(b_{ij})$ in ${\rm M}_{n}(R;s)$, set
$$(a_{ij})(b_{ij})=(c_{ij}), \quad \text{where} ~~ (c_{ij})=\sum s^{\delta_{ikj}}a_{ik}b_{kj}.$$
Here, $\delta_{ijk}=1+\delta_{ik}-\delta_{ij}-\delta_{jk}$, where $\delta_{jk}$, $\delta_{ij}$, $\delta_{ik}$ are the Kroncker delta symbols.

\medskip

We, thus, come to the following.

\begin{corollary}\label{corollary 2.52}
Let $R$ be a ring and $s\in {\rm Z}(R)\cap {\rm Nil}(R)$. If ${\rm M}_{n}(R;s)$ is a GWNC ring, then $R$ is a weakly nil-clean ring. The converse holds, provided $R$ is a nil-clean ring.
\end{corollary}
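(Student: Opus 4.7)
The plan is to exhibit a nilpotent two-sided ideal $K$ in ${\rm M}_n(R;s)$ with ${\rm M}_n(R;s)/K \cong (R/sR)^n$, after which both implications reduce to the previously established propositions.

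First I would set
\[
K := \{(a_{ij}) \in {\rm M}_n(R;s) : a_{ii} \in sR \text{ for every } i = 1,\dots,n\}.
\]
The multiplication formula in ${\rm M}_n(R;s)$ specialises on diagonal entries, using $\delta_{iki} = 0$ for $k=i$ and $\delta_{iki} = 2$ for $k \neq i$, to
\[
(AB)_{ii} = a_{ii}b_{ii} + s^2 \sum_{k \neq i} a_{ik}b_{ki},
\]
so whenever either $a_{ii}$ or $b_{ii}$ lies in $sR$, so does $(AB)_{ii}$; hence $K$ is a two-sided ideal. The diagonal projection $\pi(A) := (a_{11}+sR,\dots,a_{nn}+sR)$ is consequently a surjective ring homomorphism onto $(R/sR)^n$ with kernel exactly $K$, giving ${\rm M}_n(R;s)/K \cong (R/sR)^n$.

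To see that $K$ is nilpotent, I introduce the filtration $K_m := \{A : \text{every entry of } A \text{ lies in } s^mR\}$. The product formula yields $K_aK_b \subseteq K_{a+b}$ immediately, since each summand $s^{\delta_{ikj}} a_{ik} b_{kj}$ of $(AB)_{ij}$ lies in $s^{a+b}R$. A case split on the entry positions shows $K \cdot K \subseteq K_1$: for diagonal entries the term $a_{ii}b_{ii}$ with $a_{ii},b_{ii}\in sR$ is in $s^2R \subseteq sR$ and the remaining $s^2(\cdots)$ is automatically in $sR$; for off-diagonal entries with $i\neq j$ one expands $(AB)_{ij} = a_{ii}b_{ij} + a_{ij}b_{jj} + s\sum_{k\neq i,j} a_{ik}b_{kj}$, where each summand again lies in $sR$ using $a_{ii},b_{jj}\in sR$ and the explicit $s$-factor in the generic term. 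If $s^t = 0$, this gives $K^{2t} \subseteq K_1^t \subseteq K_t = 0$.

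Both implications now fall out of previous results. If ${\rm M}_n(R;s)$ is GWNC, then by Corollary~\ref{corollary 2.9} its homomorphic image $(R/sR)^n$ is GWNC, hence Proposition~\ref{proposition 2.19} makes $R/sR$ weakly nil-clean; as $sR$ is a nil ideal of $R$, the nil-lifting property for weakly nil-clean rings (\cite[Lemma~1]{12}, invoked already in the proof of Proposition~\ref{proposition 2.49}) transfers this property to $R$. Conversely, if $R$ is nil-clean, then $R/sR$ is nil-clean, so the direct product $(R/sR)^n$ is nil-clean and in particular GWNC; together with the nilpotency of $K$, Proposition~\ref{proposition 2.8} lifts the GWNC property back to ${\rm M}_n(R;s)$.

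The main obstacle is the verification that $K$ is closed under multiplication and nilpotent, which requires careful bookkeeping of the three-index exponents $\delta_{ikj}$ and checking that the hypothesis $a_{ii},b_{ii} \in sR$ combined with the $s$-powers already present in the non-pure-corner summands of $(AB)_{ij}$ is enough to push every entry of $AB$ into $sR$. Once that case analysis is carried out, the rest is a mechanical chaining of the results already proved in this section.
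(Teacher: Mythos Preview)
Your argument is correct and takes a genuinely different route from the paper. The paper proceeds by induction on $n$: it realises ${\rm M}_n(R;s)$ as a Morita context $\begin{pmatrix} A & M \\ N & R \end{pmatrix}$ with $A = {\rm M}_{n-1}(R;s)$, checks that $MN$ and $NM$ are nilpotent (both land in $sA$ or $s^2R$), and then invokes Proposition~\ref{proposition 2.49} together with the induction hypothesis. You instead produce the whole reduction in one step by exhibiting the nilpotent ideal $K$ with ${\rm M}_n(R;s)/K \cong (R/sR)^n$, bypassing both the induction and the Morita machinery. Your approach is more self-contained and arguably cleaner, at the cost of the explicit case analysis on the exponents $\delta_{ikj}$; the paper's approach is more modular, recycling Proposition~\ref{proposition 2.49} as a black box but needing the inductive scaffolding. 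Both routes ultimately rest on the same two ingredients: Proposition~\ref{proposition 2.19} for the forward direction (which is why $n \ge 2$ is needed) and Proposition~\ref{proposition 2.8} for the converse.
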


\begin{proof}
If $n = 1$, then ${\rm M}_n(R;s) = R$. So, in this case, there is nothing to prove. Let $n=2$. By the definition of ${\rm M}_n(R;s)$, we have ${\rm M}_2 (R;s) \cong {\rm K}_{s^2} (R)$. Apparently, $s^2 \in {\rm Nil} (R) \cap {\rm Z} (R)$, so the claim holds for $n = 2$ with the help of Corollary \ref{corollary 2.51}.
	
To proceed by induction, assume now that $n>2$ and that the assertion holds for ${\rm M}_{n-1} (R;s)$. Set $A := {\rm M}_{n-1} (R;s)$. Then, ${\rm M}_n (R;s) =
	\begin{pmatrix}
		A & M\\
		N & R
	\end{pmatrix}$
	is a Morita context, where $$M =
	\begin{pmatrix}
		M_{1n}\\
		\vdots\\
		M_{n-1, n}
	\end{pmatrix}
	\quad \text{and} \quad  N = (M_{n1} \dots M_{n, n-1})$$ with $M_{in} = M_{ni} = R$ for all $i = 1, \dots, n-1,$ and
\begin{align*}
		&\psi: N \otimes M \rightarrow N, \quad n \otimes m \mapsto snm\\
		&\phi : M \otimes N \rightarrow M, \quad  m \otimes n \mapsto smn.
\end{align*}
Besides, for $x =
\begin{pmatrix}
		x_{1n}\\
		\vdots\\
		x_{n-1, n}
\end{pmatrix}
\in M$ and $y = (y_{n1} \dots y_{n, n-1}) \in N$, we write $$xy =
\begin{pmatrix}
		s^2x_{1n}y_{n1} & sx_{1n}y_{n2} & \dots & sx_{1n}y_{n, n-1}\\
		sx_{2n}y_{n1} & s^2x_{2n}y_{n2} & \dots & sx_{2n}y_{n, n-1}\\
		\vdots & \vdots &\ddots & \vdots\\
		sx_{n-1, n}y_{n1} & sx_{n-1, n}y_{n2} & \dots & s^2x_{n-1, n}y_{n, n-1}
\end{pmatrix} \in sA$$ and $$yx = s^2y_{n1}x_{1n} + s^2y_{n2}x_{2n} + \dots + s^2y_{n, n-1}x_{n-1, n} \in s^2 R.$$ Since $s$ is nilpotent, we see that $MN$ and $NM$ are nilpotent too. Thus, we obtain that $$\frac{{\rm M}_n (R; s)}{J({\rm M}_n (R; s))} \cong \frac{A}{J (A)} \times \frac{R}{J (R)}.$$ Finally, the induction hypothesis and Proposition \ref{proposition 2.49} yield the claim after all.
\end{proof}

A Morita context $\begin{pmatrix}
	A & M\\
	N & B
\end{pmatrix}$ is called {\it trivial}, if the context products are trivial, i.e., $MN=0$ and $NM=0$. We now have
$$\begin{pmatrix}
	A & M\\
	N & B
\end{pmatrix}\cong {\rm T}(A\times B, M\oplus N),$$
where
$\begin{pmatrix}
	A & M\\
	N & B
\end{pmatrix}$ is the trivial Morita context by consulting with \cite{18}.

An other consequence is the following.

\begin{corollary}\label{corollary 2.53}
If the trivial Morita context
$\begin{pmatrix}
		A & M\\
		N & B
\end{pmatrix}$ is a GWNC ring, then $A$, $B$ are weakly nil-clean rings. The converse holds if one of the rings $A$ or $B$ is nil-clean and the other is weakly nil-clean.
\end{corollary}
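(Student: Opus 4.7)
The plan is to exploit the explicit isomorphism stated just before the corollary, namely
\[
\begin{pmatrix} A & M \\ N & B \end{pmatrix} \cong {\rm T}(A\times B,\, M\oplus N),
\]
which holds precisely because the context is trivial (so $MN=0$ and $NM=0$). Once this bridge is in place, both directions reduce to results already proved: the trivial-extension characterization in Corollary~\ref{corollary 2.11}, together with the factorwise statements about direct products.

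For the forward direction, I would assume the trivial Morita context is GWNC and transfer this along the displayed isomorphism to conclude that ${\rm T}(A\times B, M\oplus N)$ is GWNC. Then Corollary~\ref{corollary 2.11}(i), applied to the base ring $A\times B$ and the bi-module $M\oplus N$, gives that $A\times B$ is itself GWNC. Finally, Proposition~\ref{proposition 2.19} yields that each of $A$ and $B$ is weakly nil-clean, as required.

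For the converse, suppose (without loss of generality) that $A$ is nil-clean and $B$ is weakly nil-clean. Then \cite[Proposition~3]{12}, which is exactly the tool invoked in the converse direction of Proposition~\ref{proposition 2.21}, shows that $A\times B$ is weakly nil-clean and hence GWNC. Applying Corollary~\ref{corollary 2.11}(i) in the reverse direction, ${\rm T}(A\times B, M\oplus N)$ is GWNC, and transferring back along the isomorphism displayed above, so is the trivial Morita context.

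There is no genuine obstacle here: the triviality $MN=NM=0$ makes the nilpotency hypothesis of Proposition~\ref{proposition 2.49} automatic, and the corollary can alternatively be read off as the $MN=NM=0$ specialization of that proposition. The only thing to double-check is the compatibility of the module structure on $M\oplus N$ used in the isomorphism with the one implicit in ${\rm T}(A\times B, M\oplus N)$, but this is routine and is exactly the observation recorded in the paragraph preceding the corollary.
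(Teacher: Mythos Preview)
Your proof is correct and follows essentially the same route as the paper: exploit the isomorphism with ${\rm T}(A\times B, M\oplus N)$, then combine Corollary~\ref{corollary 2.11} with Proposition~\ref{proposition 2.19} for the forward direction, and with \cite[Proposition~3]{12} for the converse. The paper's own proof is terser (it simply cites Corollary~\ref{corollary 2.11} and Proposition~\ref{proposition 2.19} without spelling out the converse), so your more explicit treatment of the backward implication is, if anything, an improvement in clarity.
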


\begin{proof}
It is apparent to see that the isomorphisms
$$\begin{pmatrix}
		A & M\\
		N & B
\end{pmatrix} \cong {\rm T}(A\times B,M\oplus N) \cong \begin{pmatrix}
		A\times B & M\oplus N\\
		0 & A \times B
\end{pmatrix}$$ are fulfilled. Then, the rest of the proof follows by combining Corollary \ref{corollary 2.11} and Proposition \ref{proposition 2.19}.
\end{proof}

\section{GWNC Group Rings}

We are concerned here with the examination of groups rings in which all non-units are weakly nil-clean. To this target, following the traditional terminology, we say that a group $G$ is a {\it $p$-group} if every element of $G$ is a power of the prime number $p$. Moreover, a group $G$ is said to be {\it locally finite} if every finitely generated subgroup is finite.
	
\medskip

Suppose now that $G$ is an arbitrary group and $R$ is an arbitrary ring. As usual, $RG$ stands for the group ring of $G$ over $R$. The homomorphism $\varepsilon :RG\rightarrow R$, defined by $\varepsilon (\displaystyle\sum_{g\in G}a_{g}g)=\displaystyle\sum_{g\in G}a_{g}$, is called the {\it augmentation map} of $RG$ and its kernel, denoted by $\Delta (RG)$, is called the {\it augmentation ideal} of $RG$.

\medskip

Before receiving our major assertion of this section, we start our considerations with the next few preliminaries.

\begin{lemma} \label{R is GWNC}
If $RG$ is a GWNC ring, then $R$ is too GWNC.
\end{lemma}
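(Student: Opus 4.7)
The plan is to exhibit $R$ as a homomorphic image of $RG$ and then invoke the already-proved fact (Corollary~\ref{corollary 2.9}) that the class of GWNC rings is closed under homomorphic images. In this way the proof reduces to a routine verification rather than a direct manipulation of weakly nil-clean decompositions inside $RG$.

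Concretely, I would use the augmentation map $\varepsilon : RG \to R$, $\sum_{g\in G}a_g g \mapsto \sum_{g\in G}a_g$, which is defined in the paragraph immediately preceding the lemma. First I would note that $\varepsilon$ is a surjective ring homomorphism (surjectivity is immediate since $\varepsilon(r\cdot 1_G)=r$ for every $r\in R$, so the constant elements already hit all of $R$). Consequently $R \cong RG/\Delta(RG)$ as rings, exhibiting $R$ as a homomorphic image of $RG$.

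With that identification in hand, the hypothesis that $RG$ is GWNC together with Corollary~\ref{corollary 2.9} yields directly that $R$ is GWNC. If one prefers to avoid appealing to that corollary, the same argument can be unpacked: given $a\in R\setminus U(R)$, lift to $a\cdot 1_G\in RG$, which cannot be a unit in $RG$ (otherwise $\varepsilon$ would send its inverse to an inverse of $a$ in $R$); write $a\cdot 1_G = q\pm e$ with $q\in {\rm Nil}(RG)$ and $e\in {\rm Id}(RG)$; then apply $\varepsilon$ to obtain $a = \varepsilon(q)\pm\varepsilon(e)$, where $\varepsilon(q)\in{\rm Nil}(R)$ and $\varepsilon(e)\in{\rm Id}(R)$.

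There is really no obstacle in this argument: the only point that needs a line of justification is that a non-unit in $R$ cannot come from a unit of $RG$, which is immediate from functoriality of $\varepsilon$. Everything else is formal, so the proof should be only a couple of sentences long.
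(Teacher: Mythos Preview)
Your proposal is correct and matches the paper's proof essentially verbatim: the paper simply observes that $RG/\Delta(RG)\cong R$ and then invokes Corollary~\ref{corollary 2.9}. Your additional unpacked argument is a harmless elaboration, but the paper's version is exactly the two-sentence proof you anticipated.
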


\begin{proof}
We know that \( RG/\Delta(RG) \cong R \). Therefore, in virtue of Corollary 2.9, it follows that \( R \) must be a GWNC ring, as stated.
\end{proof}

\begin{lemma}\label{rg}
Let $R$ be a GWNC ring with $p \in {\rm Nil}(R)$ and let $G$ be a locally finite $p$-group, where $p$ is a prime. Then, the group ring $RG$ is GWNC.
\end{lemma}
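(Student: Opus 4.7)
The plan is to prove that $\Delta(RG)$ is a nil ideal of $RG$. Granted this, the standard isomorphism $RG/\Delta(RG) \cong R$ together with Proposition \ref{proposition 2.8}(i) immediately gives that $RG$ is GWNC, because $R$ is GWNC by hypothesis.

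To establish the nilness of $\Delta(RG)$, I would first reduce to the finite case. An arbitrary $\alpha\in\Delta(RG)$ has finite support, so $\alpha\in \Delta(RH)$ where $H:=\langle\mathrm{supp}(\alpha)\rangle$. Because $G$ is locally finite, $H$ is finite, and because $G$ is a $p$-group, $H$ is a finite $p$-group. It therefore suffices to show that $\Delta(RH)$ is nilpotent for every such finite $p$-subgroup $H$ of $G$.

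For the finite $p$-group step, fix $k$ with $p^k = 0$ in $R$ and pass to $\bar R := R/pR$, a ring of characteristic $p$. By the classical result in modular group-ring theory --- namely, that $\Delta(SH)$ is nilpotent whenever $S$ has characteristic $p$ and $H$ is a finite $p$-group (a consequence of Jennings' theorem; see, e.g., Passman's monograph on group rings) --- applied to $S=\bar R$, there exists $N\in\mathbb{N}$ with $\Delta(\bar R H)^N = 0$. Lifted back to $RH$, this reads $\Delta(RH)^N \subseteq pRH$. Since $p$ is central in $RH$, one has $(pRH)^j = p^j RH$, so $\Delta(RH)^{jN}\subseteq p^j RH$ for every $j$; taking $j = k$ yields $\Delta(RH)^{kN} = 0$, giving the desired nilpotence. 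I expect the main obstacle to lie precisely in invoking the nilpotence of $\Delta(\bar R H)$ in adequate generality (over a general coefficient ring of characteristic $p$, rather than only a field); this is standard but would need an explicit reference rather than a re-derivation here.
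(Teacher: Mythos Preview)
Your proposal is correct and follows exactly the same strategy as the paper: show that $\Delta(RG)$ is a nil ideal and then invoke $RG/\Delta(RG)\cong R$ together with Proposition~\ref{proposition 2.8}(i). The only difference is that the paper disposes of the nilness of $\Delta(RG)$ in one line by citing Connell \cite[Proposition~16]{con}, whereas you supply a self-contained sketch (reduction to finite $p$-subgroups, then the characteristic-$p$ nilpotence argument lifted through the $p$-adic filtration); both are fine.
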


\begin{proof}
In accordance with \cite[Proposition 16]{con}, we know that \( \Delta(RG) \) is a nil-ideal. Thus, since \( \Delta(RG) \) is nil and \( RG/\Delta(RG) \cong R \), Proposition 2.8(i) allows us to infer that \( RG \) is a GWNC ring.
\end{proof}

According to Lemma \ref{R is GWNC}, if $RG$ is a GWNC ring, then $R$ is also a GWNC ring. In what follows, we will focus on the topic of what properties the group $G$ will have when $RG$ is a GWNC ring. Before formulating the chief results, we need a series of preliminary technical claims.

\medskip

Explicitly, we obtain the following.

\begin{lemma} \label{2 and 6}
Suppose \( R \) is a GWNC ring. Then either \( 2 \in U(R) \) or \( 2 \in {\rm Nil}(R) \) or \( 6 \in {\rm Nil}(R) \).
\end{lemma}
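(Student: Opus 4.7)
The plan is to apply the GWNC hypothesis directly to the element $2 \in R$. If $2 \in U(R)$ the conclusion is immediate, so assume $2 \notin U(R)$. Then there exist $e \in {\rm Id}(R)$, $q \in {\rm Nil}(R)$, and a sign $\varepsilon \in \{+1,-1\}$ with $2 = q + \varepsilon e$. The integer $2$ lies in the prime subring $\mathbb{Z}\cdot 1_R$ and is therefore central, so in particular it commutes with $e$; this allows the binomial theorem to be applied to $q^n = (2 - \varepsilon e)^n$, even though $e$ and $q$ need not commute with each other. Using $e^k = e$ for all $k \geq 1$, I would convert the relation $q^n = 0$ into a clean $\mathbb{Z}$-linear combination of $1$ and $e$ and then extract information by right-multiplying by $e$ and by $1 - e$.

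In the case $\varepsilon = +1$, the expansion together with $(2-1)^n = 1$ yields $(2 - e)^n = 2^n + (1 - 2^n)e$. Post-multiplying by $e$ and using $e^2 = e$ collapses the right-hand side to $e$, so $e = 0$ and hence $2 = q \in {\rm Nil}(R)$. In the case $\varepsilon = -1$, the expansion together with $(2+1)^n = 3^n$ yields $(2 + e)^n = 2^n + (3^n - 2^n)e$. Post-multiplying by $e$ gives $3^n e = 0$, while post-multiplying by $1-e$ gives $2^n(1-e) = 0$, i.e.\ $2^n = 2^n e$ in $R$. Combining these two identities,
$$2^n \cdot 3^n \;=\; (2^n e)\cdot 3^n \;=\; 2^n \cdot (3^n e) \;=\; 0,$$
so $6^n = 0$ and $6 \in {\rm Nil}(R)$.

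I do not anticipate a real obstacle. The one conceptual point worth emphasising is that one has to plug the element $2$ itself into the GWNC decomposition and then exploit the centrality of the integer $2$ to legitimately expand $(2 \pm e)^n$ via the binomial theorem; beyond that, everything is routine bookkeeping with $e^2 = e$ and the elementary identities $(2-1)^n = 1$ and $(2+1)^n = 3^n$. The argument is uniform in the nilpotency index $n$, and the three possibilities in the conclusion are genuinely non-exclusive, which is fine because the statement is phrased as a disjunction.
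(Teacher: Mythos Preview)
Your argument is correct. One small inaccuracy worth flagging: you write that $e$ and $q$ ``need not commute with each other,'' but in fact they must, since $q = 2 - \varepsilon e$ with $2$ central; this is harmless because your proof never uses $eq = qe$ anyway.

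The paper takes a somewhat different and shorter route within each case. For $2 = e + q$, it simply notes that $1 - e = q - 1$; since $q$ is nilpotent, $q - 1$ is a unit, and an idempotent that is a unit must equal $1$, so $e = 0$ and $2 = q \in {\rm Nil}(R)$. For $2 = -e + q$, the paper uses the commutativity $eq = qe$ (deduced from centrality of $2$) to square and obtain $4 = e + p$ with $p = q^2 - 2qe$ nilpotent, then adds the two equations to get $6 = p + q$, which is nilpotent because $p$ and $q$ commute. Your binomial-expansion approach is more computational but has the conceptual advantage of needing only the centrality of $2$, not of $e$ or $q$; the paper's approach is slicker but leans on the (true, but separately argued) fact that $e$ and $q$ commute.
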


\begin{proof}
Assume for a moment that \( 2 \notin U(R) \). Then, there exists \( e \in \text{Id}(R) \) and \( q \in \text{Nil}(R) \) such that \( 2 = q \pm e \). Note that \( eq = qe \), because $2$ is a central element. If \( 2 = e + q \), then \( 1 - e = q - 1 \in \text{Id}(R) \cap U(R) \). Thus, \( e = 0 \), which implies \( 2 = q \in \text{Nil}(R) \). Now, if \( 2 = -e + q \), we have \( 4 = e + p \) for some \( p \in \text{Nil}(R) \). Hence, \( 6 = 4 + 2 = p + q \in \text{Nil}(R) \) by noting that \( pq = qp \).
\end{proof}

\begin{lemma} \label{gwnc and gnc and wnc}
Suppose \( R \) is a ring such that \( 2 \notin U(R) \). Then the following conditions are equivalent:
	
(1) \( R \) is a GWNC ring.
	
(2) Either \( R \) is a GNC ring or \( R \) is weakly nil-clean.
\end{lemma}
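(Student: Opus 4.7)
The implication $(2) \Rightarrow (1)$ is essentially tautological: any GNC ring yields a plus decomposition $a = e + q$ for each non-unit, which is in particular weakly nil-clean, while a weakly nil-clean ring has every element (and so every non-unit) weakly nil-clean.

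For $(1) \Rightarrow (2)$, my plan is to split according to Lemma \ref{2 and 6}. Since $2 \notin U(R)$, that lemma forces either $2 \in {\rm Nil}(R)$ or $6 \in {\rm Nil}(R)$. In the first case, $2$ is a central nilpotent, so $1 - 2r$ is a unit for every $r \in R$ via the geometric series; hence $2 \in J(R)$, and Lemma \ref{lemma 2.27} directly gives that $R$ is GNC, finishing that case.

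For the remaining case I may assume $6 \in {\rm Nil}(R)$ but $2 \notin {\rm Nil}(R)$. Fix $n$ with $6^n = 0$; since $\gcd(2^n, 3^n) = 1$ in $\mathbb{Z}$, Bezout produces integers with $a \cdot 2^n + b \cdot 3^n = 1$, so $e_0 := a \cdot 2^n \cdot 1_R$ is a central idempotent with complement $b \cdot 3^n \cdot 1_R$. This yields a ring isomorphism $R \cong R_1 \times R_2$ in which $2 \in {\rm Nil}(R_1)$ and $3 \in {\rm Nil}(R_2)$. Both $R_i$ are GWNC by Corollary \ref{corollary 2.9} and, being direct factors, both are weakly nil-clean by Proposition \ref{proposition 2.19}. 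I then promote $R_1$ to nil-clean: for any $a_1 = -e_1 + q_1$ in $R_1$, I rewrite $a_1 = e_1 + (q_1 - 2e_1)$, and observe that $2e_1 \in 2 R_1 \subseteq J(R_1)$ since $2$ is a central nilpotent; Corollary \ref{corollary 2.7} then makes $q_1 - 2e_1$ nilpotent, converting the $(-)$-decomposition into a plus one.

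It remains to check that $R = R_1 \times R_2$ is weakly nil-clean. Given $(a_1, a_2) \in R$, use the nil-clean decomposition $a_1 = f_1 + p_1$ in $R_1$; applying the previous rewrite to $-a_1$ also produces a $(-)$-decomposition $a_1 = -f_1 + (-2f_1 - p_1)$ in $R_1$. Matching signs with any weakly nil-clean decomposition of $a_2$, I set $(a_1, a_2) = (f_1, e_2) + (p_1, q_2)$ when $a_2 = e_2 + q_2$, and $(a_1, a_2) = -(f_1, e_2) + (-2f_1 - p_1, q_2)$ when $a_2 = -e_2 + q_2$. Both give weakly nil-clean decompositions of $(a_1, a_2)$, so $R$ is weakly nil-clean. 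The main obstacle throughout is the sign mismatch: weak nil-cleanness of $R$ is not automatic from that of its factors, and the whole argument hinges on promoting $R_1$ to a genuinely nil-clean ring (so that it admits decompositions of either sign), which itself rests on having $2 \in {\rm Nil}(R_1)$ together with Corollary \ref{corollary 2.7}.
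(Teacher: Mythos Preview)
Your approach is essentially the same as the paper's: invoke Lemma~\ref{2 and 6}, handle the case $2\in{\rm Nil}(R)$ via Lemma~\ref{lemma 2.27}, and in the case $6\in{\rm Nil}(R)$ use the Chinese Remainder decomposition $R\cong R_1\times R_2$ (the paper writes $R_1=R/2^nR$, $R_2=R/3^nR$ directly rather than constructing the idempotent by B\'ezout), apply Proposition~\ref{proposition 2.19} to get both factors weakly nil-clean, upgrade $R_1$ to nil-clean using $2\in{\rm Nil}(R_1)$, and conclude that the product is weakly nil-clean. The only differences are cosmetic: where the paper cites \cite[Proposition~3]{12} for the last step and asserts without proof that $R_1$ is nil-clean, you unpack both arguments explicitly via Corollary~\ref{corollary 2.7}.

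One small slip: your displayed ``minus'' decomposition should read $a_1=-f_1+(2f_1+p_1)$, not $-f_1+(-2f_1-p_1)$ (the latter sums to $-3f_1-p_1$). The correction is harmless since $2f_1+p_1\in J(R_1)+{\rm Nil}(R_1)={\rm Nil}(R_1)$ by the same Corollary~\ref{corollary 2.7} you invoke, and the conclusion that a nil-clean $R_1$ admits decompositions of either sign stands.
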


\begin{proof}
\( (2) \Longrightarrow (1) \). It is obvious, so we drop off the details.
	
\( (1) \Longrightarrow (2) \). Mimicking Lemma \ref{2 and 6}, we have that either \( 2 \in \text{Nil}(R) \) or \( 6 \in \text{Nil}(R) \). If \( 2 \in \text{Nil}(R) \), it is clear that \( R \) is a GNC ring. If \( 6 \in \text{Nil}(R) \) and, for \( n \in \mathbb{N} \), we have \( 6^n = 0 \), then \( R \cong R_1 \oplus R_2 \), where \( R_1 = R / 2^nR \) and \( R_2 = R / 3^nR \). However, Proposition~\ref{proposition 2.19} tells us that \( R_1 \) and \( R_2 \) are weakly nil-clean rings. Moreover, since \( 2 \in \text{Nil}(R_1) \), \( R_1 \) is a nil-clean ring. Thus, \cite[Proposition 3]{12} implies that \( R \) is a weakly nil-clean ring, as required.
\end{proof}

Let us now remember that a ring $R$ is said to be an {\it IU ring} if, for any $a \in R$, either $a$ or $-a$ is the sum of an involution and a unipotent.

\begin{lemma}\cite[Lemma 4.2]{CSH} \label{shaibani}
Let $R$ be a ring. Then, the following are equivalent:
	
(1) $R$ is an IU ring.
	
(2) $R$ is weakly nil-clean and $2 \in U(R)$.
\end{lemma}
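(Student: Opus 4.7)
The approach rests on the bijection between involutions and idempotents that is available whenever $2 \in U(R)$: given $v$ with $v^2 = 1$, the element $e := (v+1)/2$ is idempotent, and conversely every idempotent $e$ yields an involution $v := 2e - 1$. Under this correspondence, an IU-decomposition $a = v + (1+n)$ translates to $a = 2e + n$, which after multiplying by $2^{-1}$ becomes the weakly nil-clean expression $2^{-1}a = e + 2^{-1}n$. Since the map $a \leftrightarrow 2^{-1}a$ is a bijection of $R$, the two notions coincide once $2 \in U(R)$ is granted; the substantive work is therefore to show that IU by itself forces $2 \in U(R)$.

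For the implication $(2) \Longrightarrow (1)$: given $a \in R$, apply weak nil-cleanness to $b := 2^{-1}a$ to obtain $b = e + n$ or $b = -e + n$ with $e \in {\rm Id}(R)$ and $n \in {\rm Nil}(R)$. Set $v := 2e - 1$; a direct check gives $v^2 = 4e - 4e + 1 = 1$, and then $a = 2b$ rewrites as $a = v + (1 + 2n)$ in the first case or $-a = v + (1 + 2n)$ in the second. Since $\pm 2n$ is nilpotent, $1 \pm 2n$ is unipotent, which is the required IU-decomposition.

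For $(1) \Longrightarrow (2)$, the pivotal step is producing $2 \in U(R)$. I would apply IU to the specific element $a = -1$. If the alternative $1 = v + (1+n)$ were to hold, then $v = -n$ and $v^2 = n^2 = 1$ would make $n$ simultaneously nilpotent and a unit, collapsing the ring to zero. Hence $-1 = v + (1+n)$, giving $v = -2 - n$ and, from $v^2 = 1$, the identity $n^2 + 4n + 3 = 0$, which factors as $(n+1)(n+3) = 0$. Since $n$ is nilpotent, $n+1$ is a unit of $R$, so cancellation yields $n + 3 = 0$; thus $3 \cdot 1_R = -n$ is nilpotent, say $3^k \cdot 1_R = 0$ for some $k$. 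The characteristic of $R$ therefore divides $3^k$, and since $\gcd(2, 3^k) = 1$, there are integers $x, y$ with $2x + 3^k y = 1$; reducing modulo $3^k$ gives $2x = 1$ in $R$, hence $2 \in U(R)$. With $2$ inverted, one reverses the argument of the previous paragraph: given $a$, apply IU to $2a$, rewrite via $e := (v+1)/2$, and divide through by $2$ to display $a$ as weakly nil-clean.

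The main obstacle is extracting $2 \in U(R)$ from the bare IU hypothesis. The device is the specific choice $a = -1$: it produces the relation $(n+1)(n+3) = 0$ with $n$ nilpotent, where the automatic unit $n + 1$ can be cancelled to trap $3 \in {\rm Nil}(R)$. Testing $a = 0$ instead yields only $n(n+2) = 0$, and cancelling $n+2$ there would itself require $2 \in U(R)$; thus the asymmetric choice $a = -1$ is precisely what breaks the apparent circularity and allows the argument to get off the ground.
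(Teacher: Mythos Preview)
The paper does not supply a proof of this lemma at all: it is quoted verbatim as \cite[Lemma 4.2]{CSH}, so there is no ``paper's own proof'' to compare against. Your argument is a correct, self-contained proof of the statement.

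A couple of small remarks. In the $(2)\Rightarrow(1)$ direction, the second case actually gives $-a = v + (1 - 2n)$ rather than $-a = v + (1 + 2n)$; you implicitly fix this with the phrase ``$1\pm 2n$ is unipotent'', but it would read more cleanly with the sign correct. In the $(1)\Rightarrow(2)$ direction, your key move---plugging in $a=-1$ to force $(n+1)(n+3)=0$ and then cancelling the automatic unit $n+1$ to get $3\in{\rm Nil}(R)$, hence $2\in U(R)$---is exactly the right idea and is carried out correctly. The observation at the end, that the naive test value $a=0$ would only yield $n(n+2)=0$ and hence be circular, is a nice diagnostic explaining why $a=-1$ is the choice that works.
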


We now can attack the truthfulness of the following key statement.

\begin{theorem}\label{gr}
Let $R$ be a ring such that $2 \notin U(R)$, and $G$ a non-trivial abeliean group such that $RG$ is a GWNC ring. Then, $G$ is a $2$-group, where $2$ belongs to ${\rm Nil}(R)$.
\end{theorem}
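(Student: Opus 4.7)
The plan is to split the argument into two phases: first proving $2 \in {\rm Nil}(R)$, and then proving $G$ is a $2$-group.

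For Phase 1, note that $R \hookrightarrow RG$ preserves units, so $2 \notin U(R)$ implies $2 \notin U(RG)$. By Lemma \ref{R is GWNC}, $R$ itself is GWNC, and Lemma \ref{2 and 6} then yields $2 \in {\rm Nil}(R)$ or $6 \in {\rm Nil}(R)$. Assume toward contradiction that $2 \notin {\rm Nil}(R)$ and $6^n = 0$ in $R$; Bezout's identity $1 = \alpha \cdot 2^n + \beta \cdot 3^n$ and the Chinese Remainder Theorem produce a decomposition $R \cong R_2 \times R_3$, where $R_2 := R/2^n R$ has $2 \in {\rm Nil}(R_2)$ and $R_3 := R/3^n R$ has $3 \in {\rm Nil}(R_3)$ with $2 \in U(R_3)$; both factors are non-trivial under the assumption. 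Consequently $RG \cong R_2 G \times R_3 G$ is GWNC, and Proposition \ref{proposition 2.19} promotes each factor to weakly nil-clean. For any $g \in G \setminus \{1\}$, the decomposition $g = q \pm e$ in $R_i G$ combined with the augmentation $\varepsilon : R_i G \to R_i$ (using that nilpotent idempotents vanish) pins down $\varepsilon(e) = 1$ and $\varepsilon(q) = 0$. Combined with the structural description of $R_iG/J(R_iG)$ (Boolean for $R_2G$; satisfying $x^3 = x$ for $R_3G$, following the classification of weakly nil-clean rings in \cite{12}), this yields $g - 1 \in {\rm Nil}(R_2G)$ and $g^2 - 1 \in {\rm Nil}(R_3G)$. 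Factoring $x^{{\rm ord}(g)} - 1$ modulo a maximal ideal of $R_i$ then forces ${\rm ord}(g)$ to be a power of $2$ from $R_2G$, and to lie in $\{3^b, 2 \cdot 3^b\}$ from $R_3G$; their intersection squeezes every element of $G$ to have order at most $2$.

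Non-triviality of such a $G$ gives $G \cong \mathbb{Z}_2 \oplus H$ as an abelian group, whence $R_3 G \cong R_3 H \times R_3 H$ (using $2 \in U(R_3)$ to split the $\mathbb{Z}_2$-factor). The two-factor product characterization of weakly nil-clean rings from \cite{12} (cited within Proposition \ref{proposition 2.21}) then forces $R_3 H$ to be nil-clean, so $2 \in {\rm Nil}(R_3 H)$, contradicting $2 \in U(R_3) \subseteq U(R_3 H)$. Hence $G = \{1\}$, contradicting non-triviality; this establishes $2 \in {\rm Nil}(R)$. For Phase 2, since $2 \in {\rm Nil}(R) \subseteq J(RG)$, Lemma \ref{lemma 2.27} gives that $RG$ is GNC. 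For $g \in G \setminus \{1\}$, the element $g - 1$ is a zero-divisor (for finite-order $g$), hence a non-unit, and its nil-clean decomposition $g - 1 = e + q$ augmented as above shows $g - 1 \in {\rm Nil}(RG)$. Infinite-order elements are ruled out because $g - 1$ cannot be nilpotent in the Laurent polynomial subring $R[g, g^{-1}]$; and in characteristic $2$ the identity $(x-1)^{2^a} = x^{2^a} - 1$ then forces ${\rm ord}(g)$ to be a power of $2$. Hence $G$ is a $2$-group.

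The principal obstacle is invoking the two-factor product characterization of weakly nil-clean rings from \cite{12} at the end of Phase 1, and carrying out the explicit factorizations of $x^n - 1$ over residue fields of characteristic $2$ and $3$. The delicate combinatorial interplay between the constraints extracted from $R_2 G$ (forcing a $2$-group) and $R_3 G$ (forcing order in $\{3^b, 2 \cdot 3^b\}$) is what ultimately squeezes $G$ down to the trivial group, and verifying these classification inputs rigorously is the technical crux of the proof.
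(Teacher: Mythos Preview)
Your two-phase plan is reasonable in outline, but each phase contains a genuine gap that the argument as written does not close. In Phase~1, after obtaining that $R_2G$ and $R_3G$ are weakly nil-clean, you assert that $R_2G/J(R_2G)$ is Boolean and that $R_3G/J(R_3G)$ satisfies $x^3=x$, ``following the classification of weakly nil-clean rings in \cite{12}.'' But \cite{12} proves no such structure theorem for arbitrary (non-commutative) weakly nil-clean rings; only \emph{strongly} nil-clean rings have Boolean quotient modulo the radical, and e.g.\ ${\rm M}_2(\mathbb{Z}_2)$ is nil-clean with $J=0$ yet not Boolean. Since $R$ is not assumed commutative, $R_2G$ need not be commutative either, and the claimed Boolean structure is unjustified. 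Without it you cannot deduce $g-1\in{\rm Nil}(R_2G)$, so the squeeze on ${\rm ord}(g)$ collapses. In Phase~2 the same difficulty recurs: from $g-1=e+q$ the augmentation gives only $\varepsilon(e)=0$, i.e.\ $e\in\Delta(RG)$; it does not force $e=0$, so you cannot conclude $g-1\in{\rm Nil}(RG)$ from this alone.

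The paper sidesteps both issues by applying Lemma~\ref{gwnc and gnc and wnc} directly to $RG$ (rather than to $R$): since $2\notin U(RG)$, the ring $RG$ itself is either GNC or weakly nil-clean. It then simply invokes the existing classifications of GNC group rings \cite[Theorem~3.8]{8} and of weakly nil-clean group rings \cite[Theorem~1.14]{roh}, ruling out the $3$-group alternative in the latter via Lemma~\ref{shaibani}. Your approach is effectively an attempt to re-derive those two cited theorems from first principles, and the missing structural ingredients above are precisely what those external results supply.
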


\begin{proof}
Suppose \( RG \) is a GWNC ring. From Lemma \ref{gwnc and gnc and wnc}, we have that either \( RG \) is a GNC ring or \( RG \) is a weakly nil-clean ring. If, foremost, \( RG \) is a GNC ring, one concludes from \cite[Theorem 3.8]{8} that \( G \) is a \( p \)-group, where \( p \in \text{Nil}(R) \). If \( p \) is odd, this obviously contradicts \( 2 \in U(R) \), so it must be that \( p = 2 \).

If \( RG \) is a weakly nil-clean ring, then \cite[Theorem 1.14]{roh} riches us that either \( R \) is a nil-clean ring and \( G \) is a $2$-group, or \( R \) is a UI ring and \( G \) is a $3$-group. If \( R \) is a nil-clean ring and \( G \) is a 2-group, there is nothing left to prove, because from \cite[Proposition 3.14]{4}, we have \( 2 \in \text{Nil}(R) \). However, if \( R \) is a UI ring and \( G \) is a 3-group, from Lemma \ref{shaibani}, we have \( 2 \in U(R) \), which is a contradiction.
\end{proof}

\section{Open Questions}

We finish our work with the following two questions which allude us.

\begin{problem}\label{1}
Examine those rings whose non-invertible elements are strongly weakly nil-clean.
\end{problem}

A ring $R$ is called {\it uniquely weakly nil-clean}, provided that $R$ is a weakly nil-clean ring in which every nil-clean element is uniquely nil-clean.

\begin{problem}\label{2}
Examine those rings whose non-invertible elements are uniquely weakly nil-clean.
\end{problem}

\medskip
\medskip

\noindent {\bf Funding:} The work of the first-named author, P.V. Danchev, is partially supported by the Junta de Andaluc\'ia, Grant FQM 264. All other three authors are supported by the National Elite Foundation of Islamic Republic of Iran and receive funds from this foundation.

\vskip4.0pc

\end{document}